\newcommand{\RR}{{\mathbb R}}
\newcommand{\bS}{{\mathbb S}}
\newcommand{\N}{\mathbb{N}}
\newcommand{\K}{\mathbf{K}}
\newcommand{\Y}{\mathcal{Y}}
\newcommand{\bG}{\mathbf{G}}
\newcommand{\bH}{\mathbf{H}}
\newcommand{\bg}{\mathbf{g}}
\def\bx{{\boldsymbol{x}}}
\def\be{{\boldsymbol{e}}}
\def\by{{\boldsymbol{y}}}
\def\bz{{\boldsymbol{z}}}
\def\bw{{\boldsymbol{w}}}
\def\bv{{\boldsymbol{v}}}
\def\bu{{\boldsymbol{u}}}
\newcommand{\supp}{\mathrm{supp}}
\newcommand{\lram}[3]{\left\langle #1, #2\right\rangle_{#3}}
\newcommand{\trp}[1]{\mbox{\upshape tr}_p\left(#1\right)}
\def\bc{{\boldsymbol{c}}}
\def\bu{{\boldsymbol{u}}}
\def\bv{{\boldsymbol{v}}}
\def\bg{{\boldsymbol{\gamma}}}
\newcommand{\ud}{\mathrm{d}}
\newcommand{\td}[1]{\tilde{#1}}
\newcommand{\suppx}[2]{\mbox{supp}_{#1}(#2)}
\newcommand{\diag}[1]{\mbox{\upshape diag}(#1)}
\newcommand{\csp}{\mbox{\upshape\tiny cs}}
\newcommand{\tr}[1]{\mbox{\upshape tr}\left(#1\right)}
\def\ba{{\boldsymbol{\alpha}}}
\def\bb{{\boldsymbol{\beta}}}
\newcommand{\mI}{\mathcal{I}}
\newcommand{\mJ}{\mathcal{J}}
\newcommand{\wt}[1]{\widetilde{#1}}
\newcommand{\snd}[1]{\vert\N^n_{#1}\vert}
\newcommand{\QM}{\mathcal{Q}}
\newcommand{\X}{\mathcal{X}}
\newtheorem{assumption}{Assumption}[section]
\newtheorem{theorem}{Theorem}[section]
\newtheorem{corollary}{Corollary}[section]
\newtheorem{lemma}{Lemma}[section]
\newtheorem{example}{Example}[section]
\newtheorem{definition}{Definition}[section]
\newtheorem{remark}{Remark}[section]
\newtheorem{proposition}{Proposition}[section]
\title{Positivstellens\"atze for polynomial matrices with universal quantifiers}
\author{Feng Guo}
\address[Feng Guo]{School of Mathematical Sciences, Dalian  University of Technology, Dalian, 116024, China}
\email{fguo@dlut.edu.cn}
\author{Jie Wang}
\address[Jie Wang]{State Key Laboratory of Mathematical Sciences, Academy of Mathematics and Systems Science, Chinese Academy of Sciences, Beijing, 100190, China}
\email{wangjie212@amss.ac.cn}
\date{ \today}
\begin{document}

\begin{abstract}
This paper investigates Positivstellens{\"a}tze for polynomial matrices subject to universally quantified polynomial matrix inequality constraints. We first establish a matrix-valued Positivstellensatz under the Archimedean condition, incorporating universal quantifiers. For scalar-valued polynomial objectives, we further develop a sparse Positivstellensatz that leverages correlative sparsity patterns within these quantified constraints. Moving beyond the Archimedean framework, we then derive two generalized Positivstellens{\"a}tze under analogous settings. These results collectively unify and extend foundational theorems in three distinct contexts: classical polynomial Positivstellens{\"a}tze, their universally quantified counterparts, and matrix polynomial formulations. 
Applications of the established Positivstellens{\"a}tze to robust polynomial matrix inequality constrained 
optimization are also investigated.
\end{abstract}
\subjclass{90C23, 15A54, 13J30, 14P10, 11E25, 12D15}
\keywords{Positivstellensatz, sum of squares, polynomial matrix inequality, polynomial matrix optimization, universal quantifier, correlative sparsity}

\maketitle

\section{Introduction}

Positivstellens\"atze are fundamental results in real algebraic geometry, asserting under which conditions a polynomial is guaranteed to be positive on a given set 
\cite{realAG,Laurent_sumsof,PPSOSMarshall,ScheidererSurvey}. 
Most Positivstellens\"atze achieve this by expressing the polynomial using
sums of squares (SOS). 
These powerful results offer constructive methods for certifying whether a polynomial $f$ is positive over a basic semialgebraic set defined by polynomial inequalities
\[
\K\coloneqq\{\bx\in\RR^n \mid g_j(\bx)\ge 0, j=1,\ldots,s\},
\]
and have widespread applications in areas such as optimization, algebraic geometry, control theory, and more where polynomial positivity is a key concern \cite{SOCAGbook,HKL2020,Lasserre2015,Laurent_sumsof,Nie2023}.

Depending on whether the semialgebraic set $\K$ is assumed to be compact, 
Positivstellens\"atze are divided into two categories.
When $\K$ is a compact polyhedron with non-empty interior, Handelman's Positivstellensatz
\cite{Handelman1988} states
that a polynomial $f$ that is positive on $\K$ can be expressed as a positive linear combination of cross-products of $g_j$'s.
In the case of a more general compact basic semialgebraic set $\K$, Schm\"udgen \cite{Schmugen1991} demonstrated that if a polynomial $f$ is positive on $\K$, then it belongs to the preorder generated by $g_j$'s, that is, $f$ can be represented as an SOS-weighted combination of cross-products of $g_j$'s. 
Putinar’s Positivstellensatz \cite{Putinar1993} provides an alternative representation that avoids the need for cross-products of $g_j$'s under the Archimedean condition (slightly stronger than compactness). 
Building on Putinar’s Positivstellensatz, Lasserre \cite{LasserreGlobal2001,Lasserre09,Lasserre2015} 
introduced a moment-SOS hierarchy of semidefinite relaxations 
for polynomial optimization. This framework generates a non-decreasing sequence of lower bounds converging 
to the optimal value of a given polynomial optimization problem. 

There have been several attempts to provide certificates of positivity of a polynomial
over a non-compact semialgebraic set. Without requiring compactness of $\K$, 
Krivine-Stengle Positivstellensatz \cite{Krivine1964, Stengle1974} states that a polynomial $f$ is positive 
over $\K$ if and only if $\psi f=1+\phi$ for some $\psi, \phi$ from the preorder generated by $g_j$'s. P\'olya \cite{polya} demonstrated that if $f$ is homogeneous and positive on 
$\RR^n_+\setminus\{\mathbf{0}\}$, then 
multiplying $f$ by some power of $\sum_{i=1}^nx_i$, one obtains a polynomial with nonnegative coefficients. 
Dickinson and Povh \cite{DP2015} generalized P\'olya's Positivstellensatz for homogeneous polynomials being positive on the intersection 
$\RR^n_+\cap \K\setminus\{\mathbf{0}\}$, where $g_j$'s are assumed to be also homogeneous. 
Reznick \cite{Reznick1995} proved that, after multiplying by some power of $\sum_{i=1}^n x_i^2$, any 
positive definite (PD) form is a sum of even 
powers of linear forms. Putinar and Vasilescu \cite{PV1999} extended Reznick's result to the constrained case where $f$ and $g_j$'s are homogeneous polynomials of even degree.
When $\K$ is non-compact, SOS-structured positivity certificates of $f$ over $\K$ could be 
also established by investigating various geometric objects associated with the data, 
e.g., gradient varieties \cite{Nie2006}, principal gradient tentacles \cite{Markus06}, 
truncated tangency varieties \cite{Vui-constraints}, and the polar varieties \cite{GREUET2012503}. 

Recently, Hu, Klep, and Nie \cite{HKN2024} studied Positivstellens\"atze concerning semialgebraic sets defined by universal quantifiers (UQ). Specifically, for a given tuple $g=(g_1,\ldots,g_s)$ of real polynomials
in $\bx$ and $\by=(y_1,\ldots,y_m)$, and a closed set $\Y\subset\RR^m$, they consider 
representations of polynomials that are positive over the semialgebraic set 
\[
\mathcal{U}\coloneqq\{\bx\in\RR^n\mid g_1(\bx,\by)\ge 0, \ldots, g_s(\bx, \by)\ge 0, \ \forall \by\in\Y\}. 
\]
For a fixed measure $\nu$ with support exactly on $\Y$, under the Carleman condition on $\nu$ and the Archimedean condition,
they proved that if a polynomial $f$ is positive on $\mathcal{U}$, then $f$ belongs to the quadratic module associated to $(g, \nu)$. In other words, $f$ admits a representation
\[
\sigma_0(\bx)+\int_{\Y}\sigma_1(\bx,\by)g_1(\bx,\by)\ud\nu(\by)+\cdots+
\int_{\Y}\sigma_s(\bx,\by)g_s(\bx,\by)\ud\nu(\by),
\]
where each $\sigma_j$, $j=0,1,\ldots,s$, are SOS polynomials. They also investigated the 
corresponding moment problem on the semialgebraic set $\mathcal{U}$. As an important application, 
their results could be used to solve 
semi-infinite optimization problems which are highly challenging.

Most Positivstellens\"atze could be generalized to the matrix setting that both $f$ and $g_j$'s are polynomial matrices. 
Scherer and Hol \cite{SH2006} developed a matrix-version of Putinar's Positivstellensatz. 
Cimpri{\v c} \cite{CIMPRIC2011} extended the Krivine-Stengle Positivstellensatz to the case of polynomial matrices with polynomial constraints. 
A matrix version of Handelman's Positivstellensatz was proposed in \cite{LD2018}.
Building on Scherer and Hol's Positivstellensatz, Dinh et al. \cite{DHL2021} 
generalized the classical Schm\"udgen, 
Putinar-Vasilescu, and Dickinson–Povh Positivstellens\"atze to the polynomial matrix setting.
Given the projections of two semialgebraic sets defined by polynomial matrix inequalities (PMI), Klep and Nie \cite{KN2020} provided 
a matrix Positivstellensatz with lifting polynomials to determine whether one is contained in the other. These generalizations have a wide range of applications, particularly in areas such as optimal control, systems theory \cite{HL2006,HL2012,ichihara2009optimal,pozdyayev2014atomic,vanantwerp2000tutorial}.

Computing SOS-structured representations involved in Positivstellens\"atze could be typically cast as
semidefinite programs (SDP). 
However, the size of the SDPs grows rapidly with the dimension of the problem. 
Hence, from the perspective of computation, it becomes appealing to develop sparse versions of Positivstellens\"atze for sparse data, e.g., correlative sparsity \cite{MWG2024,LasserreSparsity,WKKM2006}, term sparsity \cite{MWG2024,magron2023sparse,wang3,wang2021tssos,wang2022cs}, and matrix (chordal) sparsity \cite{zheng2019chordal, ZF2023}. 

Due to estimation errors or lack of information, 
the data from real-world problems often involve uncertainty. 
As a result, ensuring the robustness of PMIs over a prescribed set with uncertainty is crucial for some 
safety-critical applications with little tolerance for failure \cite{BBC2011}. 
Consequently, Positivstellens\"atze for polynomial matrices with UQs will be a powerful 
mathematical tool for addressing this issue, which serves as the primary motivation for this work.
Precisely, consider the semialgebraic set
\begin{equation}\label{defineX}
	\X\coloneqq\{\bx\in\RR^n \mid G(\bx, \by)\succeq 0, \ \forall  \by\in \Y\subset\RR^m\},
\end{equation}
where $G(\bx, \by)\in\bS[\bx,\by]^q$ (the set of $q\times q$ symmetric polynomial matrices 
in $\bx$ and $\by$), and $\Y\subset\RR^m$
is closed. The goal of this paper is to provide certificates for positive definiteness of a $p\times p$ symmetric polynomial matrix $F(\bx)\in\bS[\bx]^p$ over $\X$. 

\begin{table}\caption{Summary of Positivstellens\"atze for polynomials, polynomials with UQs, polynomial matrices, and  polynomial matrices with UQs}\label{tab1}
{\small
    \centering
    \begin{tabular}{c|c|c|c|c}
         \hline
         Positivstellensatz&Polynomials&Polynomials with UQs&PMIs&PMIs with UQs \\
         \hline
         Putinar&Putinar \cite{Putinar1993}&Hu, Klep, and Nie \cite{HKN2024}&Scherer and Hol \cite{SH2006}&Theorem \ref{th::SHpsatz}\\
         Putinar–Vasilescu & Putinar and Vasilescu \cite{PV1999}&Theorem \ref{th::homo}&Dinh et al. \cite{DHL2021}&Theorem \ref{th::homo}\\
         P{\'o}lya& P{\'o}lya \cite{polya}&Theorem \ref{th::PolyaMPU}&Theorem \ref{th::PolyaMPU}&Theorem \ref{th::PolyaMPU}\\
         \hline
    \end{tabular}
    }
\end{table}

\begin{table}\caption{Summary of sparse Positivstellens\"atze for polynomials, polynomial matrices, and polynomial matrices with UQs}\label{tab2}
    \centering
    \begin{tabular}{c|c}
         \hline
           Case& Literature\\
         \hline
         Polynomials with polynomial constraints &Lasserre \cite{LasserreSparsity}\\
         Polynomials with PMI constraints&Kojima and Muramatsu \cite{KM2009}\\
         Polynomial matrices with polynomial constraints&Counterexample \cite{MWG2024}\\
         Polynomials with PMI constraints and UQs&Theorem \ref{th::sparse}\\
         \hline
    \end{tabular}
\end{table}

\vskip 5pt
\noindent{\bf Contributions.} We generalize several classical Positivstellens\"atze from scalar polynomials to the matrix setting with UQs (see Tables \ref{tab1} and \ref{tab2} for summaries). 
In the following, we highlight the main results of this paper.
Throughout the paper, let $\nu$ be a fixed Borel measure on $\RR^m$ with support $\supp(\nu)=\Y$ 
and satisfying $\int_{\Y}|h(\by)|\ud\nu(\by)<\infty$ for all $h(\by)\in\RR[\by]$.
For any $H(\bx, \by)\in\bS[\bx, \by]^p$, let us write 
$H(\bx, \by)=\sum_{\bb\in\suppx{\by}{H}}H_{\bb}(\bx)\by^{\bb}$ as a polynomial matrix in $\by$ with
coefficient matrices $H_{\bb}(\bx)\in\bS[\bx]^p$ and let 
\[
	\int_{\Y}
	H(\bx,\by)\ud\nu(\by)\coloneqq\sum_{\bb\in\suppx{\by}{H}}H_{\bb}(\bx)\int_{\Y}\by^{\bb}\ud\nu(\by)\in\bS[\bx]^p.
\]

The main result of this paper is the following matrix-valued Positivestellensatz incorporating universal quantifiers,
which holds under the Archimedean condition (Assumption \ref{assump2})
and the Carleman condition on $\nu$ (Assumption \ref{assump::carleman}).
See Section \ref{sec::pre} for the definition of the product $\langle\cdot,\cdot\rangle_p$. 

\vskip 6pt
\noindent{\bf Theorem A. (Theorem \ref{th::SHpsatz}) }
{\itshape Suppose that Assumptions \ref{assump2} and \ref{assump::carleman} hold. 
If $F(\bx)\in\bS[\bx]^{p}$ is PD on $\X$, then there exist
SOS matrices $S_0\in\bS[\bx]^p$, $S\in\bS[\bx,\by]^{pq}$ such that 
\[F(\bx)=S_0(\bx)+\int_{\Y}\lram{S(\bx,\by)}{G(\bx,\by)}{p}\ud\nu(\by).\]
}
\vskip 6pt


Then we consider the particular case of $p=1$, namely, the objective is a scalar polynomial.
Suppose that $\X$ is defined by multiple PMIs with UQs ($G_j(\bx,\by)\succeq0,j=1,\ldots,s$). 
We assume the presence of correlative sparsity in the problem data, which implies that the variables $\bx$ decompose as a union of subsets $\bx=\cup_{\ell=1}^t\bx(\mI_\ell)$ such that $\{G_j\}_{j=1}^s=\cup_{\ell=1}^t\{G_j\}_{j\in\mJ_{\ell}}$
and $G_j\in\bS[\bx(\mI_\ell)]^{q_j}$, $j\in\mJ_{\ell}$, $\ell=1,\ldots,t$ (see Assumption~\ref{assump_CS}). 
Under these conditions, we could give the following sparse Positivstellensatz 
that leverages correlative sparsity patterns within these quantified constraints.

\vskip 6pt
\noindent{\bf Theorem B. (Theorem \ref{th::sparse}) }{\itshape
    Suppose that $\Y$ is compact, Assumption \ref{assump_CS} holds for $f\in\RR[\bx]$ and $\X$, 
    and Assumption \ref{assump2} holds with respect to each $\bx(\mI_\ell)$. 
    If $f>0$ on $\X$, then there exist SOS polynomials $\sigma_{\ell,0}\in\RR[\bx(\mI_\ell)]$, and SOS polynomial matrices $S_j\in\bS[\bx(\mI_\ell),\by]^{q_j}$, $j\in\mJ_{\ell}$, 
    such that
    \[f(\bx)=\sum_{\ell=1}^t\left(\sigma_{\ell,0}(\bx(\mI_\ell))+\sum_{j\in\mJ_{\ell}}\int_{\Y}\langle S_j(\bx(\mI_\ell),\by),G_j(\bx(\mI_\ell),\by)\rangle\ud\nu(\by)\right).\]
}
\vskip 6pt

Next, building on Theorem A and existing techniques, 
we establish two generalized Positivstellens{\"a}tze for polynomial matrices with UQs 
and without assuming the Archimedean condition. 
The first is a Positivstellensatz for non-compact case. 

\vskip 6pt
\noindent{\bf Theorem C. (Theorem \ref{th::homo}) }{\itshape
Suppose that Assumption \ref{assump::carleman} holds,
$F\in\bS[\bx]^p$ and $G\in\bS[\bx, \by]^{q}$ are homogeneous in $\bx$ of even degree, and 
$F(\bx)\succ 0$ for all $\bx\in\X\setminus\{\mathbf{0}\}$. Then, there exists $N\in\N$ and 
SOS matrices $S_0\in\bS[\bx]^p$, $S\in\bS[\bx,\by]^{pq}$ which 
are homogeneous in $\bx$ and $\deg S_0=2N+\deg F$, $\deg_{\bx} S=2N+\deg F-\deg_{\bx}G$, 
such that 
\[
\Vert\bx\Vert^{2N}F(\bx)=S_0(\bx)+\int_{\Y}\lram{S(\bx,\by)}{G(\bx, \by)}{p}
 \ud\nu(\by).
 \]
}



The second is a Positivstellensatz on the nonnegative orthant.
\vskip 6pt
\noindent{\bf Theorem D. (Theorem \ref{th::PolyaMPU}) }{\itshape
Suppose that 
$F\in\bS[\bx]^p$ and $G\in\bS[\bx, \by]^{q}$ are homogeneous in $\bx$, $F(\bx)\succ 0$ for all $\bx\in\RR^n_+\cap\X\setminus\{\mathbf{0}\}$, and $\Y$ is compact. 
Then, there exists $N\in\N$ and 
polynomial matrices $P_0=\sum_{\ba}P_{0,\ba}\bx^{\ba}\in\bS[\bx]^p$, 
$P=\sum_{\ba}P_{\ba}(\by)\bx^{\ba}\in\bS[\bx,\by]^{pq}$ which 
are homogeneous in $\bx$ and satisfy each $P_{0,\ba}\succ 0$, $P_{\ba}(\by)\succ 0$
for all $\by\in\Y$, such that 
\[
\left(\sum_{i=1}^n x_i\right)^{N}F(\bx)=P_0(\bx)+\int_{\Y}\lram{P(\bx, \by)}{G(\bx, \by)}{p}
 \ud\nu(\by).
 \]
}

The inhomogeneous counterparts of Theorems C and D appear in Corollaries \ref{cor::inhomo} and \ref{cor::PolyaMPU}, respectively.  

As applications, leveraging the SOS-structured certificates provided by the established Positivstellens\"atze, we propose hierarchies of SDP relaxations for the robust PMI constrained optimization 
problem and analyze their convergence. 

\vskip 5pt
\vskip 7pt
The rest of this paper is organized as follows. In Section \ref{sec::pre}, we review some preliminary concepts. Section \ref{sec::archi} presents a matrix-valued Positivstellensatz for polynomial matrices with UQs under the Archimedean condition, along with a sparse version in the presence of correlative sparsity. In Section \ref{sec::nonarchi}, without relying on the Archimedean condition, we derive two generalized
Positivstellens\"atze for polynomial matrices with UQs. In Section \ref{sec::app}, we apply the established Positivstellens\"atze to robust PMI constrained optimization. For the sake of readability, some lengthy and technical proofs are deferred to Section \ref{sec::proofs}. Conclusions are given in Section \ref{sec::cons}.

\section{Preliminaries}\label{sec::pre}
We collect some notation and basic concepts
which will be used in this paper. We denote by $\bx$ (resp., $\by$)
the $n$-tuple (resp., $m$-tuple) of variables $(x_1,\ldots,x_n)$ (resp.,
$(y_1,\ldots,y_m)$).
The symbol $\N$ (resp., $\RR$, $\RR_+$) denotes
the set of nonnegative integers (resp., real numbers, nonnegative real numbers). 
For a positive integer $n\in\N$, denote by $[n]$ the set $\{1,\ldots,n\}$.
Denote by $\RR^p$ (resp. $\RR^{l_1\times l_2}$, $\bS^{p}$, $\bS_+^{p}$) 
the space of $p$-dimensional real vector (resp. $l_1\times l_2$ real matrix, $p\times p$ symmetric real matrix,
$p\times p$ PSD matrix). 
Denote by $\RR^n_+$ the nonnegative orthant of $\RR^n$.
For $\bv\in\RR^p$ (resp., $N\in\RR^{l_1\times l_2}$), the symbol 
$\bv^{\intercal}$ (resp., $N^\intercal$) denotes the transpose of $\bv$ (resp., $N$). 
For a matrix $N\in\RR^{p\times p}$, $\tr{N}$ denotes its trace. 
For two matrices $N_1$ and $N_2$, $N_1\otimes N_2$ denotes the 
Kronecker product of $N_1$ and $N_2$.
For two matrices $N_1$ and $N_2$ of the same size, $\langle N_1, N_2\rangle$ denotes the 
inner product $\tr{N_1^{\intercal}N_2}$ of $N_1$ and $N_2$.
The notation $I_p$ denotes the $p\times p$ identity matrix.
For any $t\in \RR$, $\lceil t\rceil$ (resp., $\lfloor t\rfloor$) denotes the smallest (resp., largest)
integer that is not smaller (resp., larger) than $t$. For $\bu\in \RR^n$,
$\Vert \bu\Vert$ denotes the standard Euclidean norm of $\bu$.
For $N\in\RR^{l_1\times l_2}$, $\Vert N\Vert$ denotes the spectral norm of $N$.
For a vector $\ba=(\alpha_1,\ldots,\alpha_n)\in\N^n$,
let $\vert\ba\vert=\alpha_1+\cdots+\alpha_n$. For a set $A$, we use $\vert A\vert$ to denote its cardinality.
For $k\in\N$, let $\N^n_k\coloneqq\{\ba\in\N^n\mid \vert\ba\vert\le k\}$
and $\snd{k}=\binom{n+k}{k}$ be its cardinality.
For variables $\bx \in \RR^n$ and $\ba\in\N^n$, $\bx^{\ba}$ denotes the monomial
$x_1^{\alpha_1}\cdots x_n^{\alpha_n}$.
Let $\RR[\bx]$ (resp. $\bS[\bx]^p$) denote 
the set of real polynomials (resp. $p\times p$ symmetric real polynomial matrices) in $\bx$.
For $h\in\RR[\bx]$ (resp., $h\in\RR[\bx,\by]$), we denote by $\deg h$ (resp., $\deg_{\bx} h$) 
its total degree in $\bx$.
For a polynomial matrix $T(\bx)=[T_{ij}(\bx)]$ (resp., $T(\bx, \by)=[T_{ij}(\bx,\by)]$), denote $\deg T\coloneqq\max_{i,j}\deg T_{ij}$ (resp., $\deg_{\bx}T\coloneqq\max_{i,j}\deg_{\bx}T_{ij}$).
For $k\in\N$, denote by $\RR[\bx]_k$ (resp., $\bS[\bx]^p_k$) the subset of $\RR[\bx]$ (resp., $\bS[\bx]^p$)
of degree up to $k$.
For any $P(\bx)=[P_{ij}(\bx)]\in\bS[\bx]^p$ and $Q(\bx,\by)=[Q_{ij}(\bx, \by)]\in\bS[\bx,\by]^q$,
denote 
\[
\begin{aligned}
\supp(P)&\coloneqq\{\ba\in\N^n \mid \bx^{\ba}\ \text{appears in some }P_{ij}(\bx)\},\\
\suppx{\bx}{Q}&\coloneqq\{\ba\in\N^n \mid \bx^{\ba}\by^{\bb}\ \text{appears in some }Q_{ij}(\bx, \by) \text{ for some }\bb\in\N^m\},\\
\suppx{\by}{Q}&\coloneqq\{\bb\in\N^m \mid \bx^{\ba}\by^{\bb}\ \text{appears in some }Q_{ij}(\bx, \by) \text{ for some }\ba\in\N^n\}.
\end{aligned}
\]

Let us recall some background on SOS matrices and positivstellens\"atz for polynomial matrices. 
For a polynomial $f(\bx)\in\RR[\bx]$, if there exist polynomials $f_1(\bx),\ldots,f_t(\bx)$ such that $f(\bx)=\sum_{i=1}^tf_i(\bx)^2$,
then we call $f(\bx)$ an SOS.
A polynomial matrix $S(\bx)\in\bS[\bx]^{p}$ is said to be an \emph{SOS matrix} if there exists an $l\times p$ polynomial matrix $T(\bx)$ for some
$l\in\N$ such that $S(\bx)=T(\bx)^{\intercal}T(\bx)$. For $d\in\N$, denote by $[\bx]_d$
the canonical basis of $\RR[\bx]_d$, i.e.,
\begin{equation}\label{eq::ud}
	[\bx]_d\coloneqq[1,\ x_1,\ x_2,\ \cdots,\ x_n,\ x_1^2,\ x_1x_2,\ \cdots,\
	x_n^d]^{\intercal},
\end{equation}
whose cardinality is $\snd{d}=\binom{n+d}{d}$. 
With $d=\deg T$, we can write $T(\bx)$ as
\[
	T(\bx)=Q([\bx]_d\otimes I_p) \text{ with } 
 Q=[Q_1,\ldots,Q_{\snd{d}}],\quad Q_i\in\RR^{l\times p},
\]
where $Q$ is the vector of the coefficient matrices of $T(\bx)$ with respect to
$[\bx]_d$. Hence, $S(\bx)$ is an SOS matrix with respect to $[\bx]_d$ if there
exists some $Q\in\RR^{l\times p\snd{d}}$ satisfying 
\[
	S(\bx)=T(\bx)^{\intercal}T(\bx)=([\bx]_d\otimes I_p)^{\intercal}(Q^{\intercal}Q)([\bx]_d\otimes I_p). 
\]
Thus, we have the following results.
\begin{proposition}{\upshape \cite[Lemma 1]{SH2006}}\label{prop::SOSrep}
A polynomial matrix $S(\bx)\in\bS[\bx]^{p}$ is an SOS matrix with
respect to the monomial basis $[\bx]_d$ if and only if there exists $Z\in\mathbb{S}_+^{p\snd{d}}$ such that  
$S(\bx)=([\bx]_d\otimes I_p)^{\intercal} Z ([\bx]_d\otimes I_p)$.
\end{proposition}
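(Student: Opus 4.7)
The plan is to prove this well-known Gram-matrix characterization by exhibiting the bijection between SOS decompositions $\Sigma=T^\intercal T$ with $\deg T\le d$ and PSD Gram matrices $Z\in\mathbb{S}_+^{p\snd{d}}$, using the identity $T(\bx)=Q([\bx]_d\otimes I_p)$ already recalled before the statement.

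For the forward direction, I would take an SOS representation $\Sigma(\bx)=T(\bx)^{\intercal}T(\bx)$ with $T\in\RR[\bx]^{l\times p}$ of degree at most $d$ and expand $T$ in the basis $[\bx]_d\otimes I_p$, i.e., write $T(\bx)=Q([\bx]_d\otimes I_p)$ for a unique coefficient matrix $Q=[Q_1,\ldots,Q_{\snd{d}}]\in\RR^{l\times p\snd{d}}$ with $Q_i\in\RR^{l\times p}$, as already set up in the paragraph preceding the proposition. Substituting gives
\[
\Sigma(\bx)=([\bx]_d\otimes I_p)^{\intercal}Q^{\intercal}Q\,([\bx]_d\otimes I_p),
\]
so the matrix $Z\coloneqq Q^{\intercal}Q$ lies in $\mathbb{S}_+^{p\snd{d}}$ and does the job.

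For the converse, given $Z\in\mathbb{S}_+^{p\snd{d}}$ with $\Sigma(\bx)=([\bx]_d\otimes I_p)^{\intercal}Z([\bx]_d\otimes I_p)$, I would factor $Z=Q^{\intercal}Q$ for some $Q\in\RR^{l\times p\snd{d}}$ (e.g., via the spectral decomposition $Z=U\Lambda U^{\intercal}$ with $\Lambda\succeq 0$, and set $Q=\Lambda^{1/2}U^{\intercal}$, so that one may take $l=\rank Z\le p\snd{d}$). Defining $T(\bx)\coloneqq Q([\bx]_d\otimes I_p)\in\RR[\bx]^{l\times p}$ with $\deg T\le d$, one reads off $\Sigma(\bx)=T(\bx)^{\intercal}T(\bx)$, so $\Sigma$ is an SOS matrix with respect to $[\bx]_d$.

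There is really no substantive obstacle here; the only point that deserves a line of care is verifying that the vectorization $T\mapsto Q$ through the Kronecker basis $[\bx]_d\otimes I_p$ is a linear bijection between polynomial matrices of degree at most $d$ and coefficient matrices in $\RR^{l\times p\snd{d}}$, so that both directions actually match up with the same $Z=Q^{\intercal}Q$. Once this bookkeeping is in place, the proposition is immediate from the equivalence between PSD matrices and matrices of the form $Q^{\intercal}Q$.
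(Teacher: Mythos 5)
Your proposal is correct and follows essentially the same route as the paper, which derives the forward direction by exactly the substitution $T(\bx)=Q([\bx]_d\otimes I_p)$, $Z=Q^{\intercal}Q$ in the paragraph preceding the statement and otherwise defers to \cite[Lemma 1]{SH2006}; the converse via factoring a PSD $Z$ as $Q^{\intercal}Q$ is the standard complement. No gaps.
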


\begin{lemma}\label{prop::submatrix}
Let $S(\bx)\in \bS[\bx]^{p}$ be an SOS matrix and $S_k(\bx)\in\bS[\bx]^{k\times k}$ 
be a principal submatrix of $S(\bx)$ whose rows and columns are indexed by $(p_1,\ldots,p_k)$ with
$1\le p_1<\cdots<p_k\le p$, then $S_k(\bx)$ is an SOS matrix. 
\end{lemma}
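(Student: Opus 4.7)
The plan is to use the defining factorization of an SOS matrix together with the fact that principal submatrices are obtained by a congruence transformation with a coordinate-selection matrix, under which SOS decompositions are preserved.

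First, I would invoke the definition: since $\Sigma(\bx)\in\bS[\bx]^p$ is an SOS matrix, there exist $l\in\N$ and an $l\times p$ polynomial matrix $T(\bx)$ such that $\Sigma(\bx)=T(\bx)^{\intercal}T(\bx)$. Next, I would introduce the $p\times k$ constant selection matrix $E=[e_{p_1},\ldots,e_{p_k}]$, whose $j$-th column is the standard basis vector $e_{p_j}\in\RR^p$. A direct index computation shows that the principal submatrix of $\Sigma(\bx)$ with row and column indices $(p_1,\ldots,p_k)$ is exactly
\[
\Sigma_k(\bx)=E^{\intercal}\Sigma(\bx)E.
\]

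Substituting the SOS factorization yields
\[
\Sigma_k(\bx)=E^{\intercal}T(\bx)^{\intercal}T(\bx)E=\bigl(T(\bx)E\bigr)^{\intercal}\bigl(T(\bx)E\bigr).
\]
Setting $\widetilde{T}(\bx)\coloneqq T(\bx)E$, which is an $l\times k$ polynomial matrix, one obtains $\Sigma_k(\bx)=\widetilde{T}(\bx)^{\intercal}\widetilde{T}(\bx)$, so $\Sigma_k(\bx)$ is an SOS matrix by definition. Alternatively, starting from the representation in Proposition \ref{prop::SOSrep}, one can write $\Sigma(\bx)=([\bx]_d\otimes I_p)^{\intercal}Z([\bx]_d\otimes I_p)$ with $Z\in\bS_+^{p\snd{d}}$; then $\Sigma_k(\bx)=([\bx]_d\otimes I_k)^{\intercal}(I_{\snd{d}}\otimes E)^{\intercal}Z(I_{\snd{d}}\otimes E)([\bx]_d\otimes I_k)$, and since $(I_{\snd{d}}\otimes E)^{\intercal}Z(I_{\snd{d}}\otimes E)\in\bS_+^{k\snd{d}}$, Proposition \ref{prop::SOSrep} again yields that $\Sigma_k(\bx)$ is SOS.

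There is no serious obstacle here; the only point requiring mild care is the bookkeeping that $E^{\intercal}\Sigma E$ indeed equals the principal submatrix indexed by $(p_1,\ldots,p_k)$, and that PSD-ness (equivalently, the SOS factorization) is preserved under congruence by the constant rectangular matrix $E$. Both are immediate from the definitions.
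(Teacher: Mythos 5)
Your proof is correct and is essentially the paper's own argument: the matrix $T(\bx)E$ is exactly the column submatrix $T_k(\bx)$ of $T(\bx)$ used in the paper, so the factorization $\Sigma_k=(T E)^{\intercal}(T E)$ coincides with the paper's $\Sigma_k=T_k^{\intercal}T_k$. The alternative route via Proposition \ref{prop::SOSrep} is a valid bonus but not needed.
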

\begin{proof}
As $S(\bx)$ is an SOS matrix, there exists an $l\times p$ polynomial matrix $T(\bx)$ for some
$l\in\N$ such that $S(\bx)=T(\bx)^{\intercal}T(\bx)$. Denote
by $T_k(\bx)$ the submatrix of $T(\bx)$ consisting of the columns of
$T(x)$indexed by $(p_1,\ldots,p_k)$. Then $S_k(\bx)=T_k(\bx)^{\intercal}T_k(\bx)$ and hence is an SOS matrix.
\end{proof}

We next recall Scherer-Hol's Positivstellensatz for polynomial matrices obtained in \cite{SH2006}.
Define the bilinear mapping 
\[
\lram{\cdot}{\cdot}{p} \colon \RR^{pq\times pq}\times \RR^{q\times q} \to \RR^{p\times p},\quad
\lram{A}{B}{p}=\trp{A^{\intercal}(I_p\otimes B)}, 
\]
with
\[
\trp{C}\coloneqq\left[
\begin{array}{ccc}
\tr{C_{11}} & \cdots & \tr{C_{1p}}\\
\vdots & \ddots & \vdots\\
\tr{C_{p1}} & \cdots & \tr{C_{pp}}
\end{array}
\right]
\quad\text{for } C=[C_{ij}]_{i,j\in[p]}\in\RR^{pq\times pq}, C_{ij}\in\RR^{q\times q}.
\]
When $p=1$, the product $\lram{A}{B}{1}$ coincides with the matrix
inner product $\langle A, B\rangle=\tr{A^\intercal B}$.

Let $\mathbf{H}\coloneqq\{H_1,\ldots,H_t\}$ where each $H_j\in\bS[\bx]^{r_j}$ for some $r_j\in\N$.
\begin{lemma}\label{prop::block}
Let $H(\bx)=\diag{H_1(\bx),\ldots,H_t(\bx)}$ be a block diagonal matrix, then for any SOS matrix $S(\bx)\in\bS[\bx]^{pr}$ where $p\in\N$ and $r=r_1+\cdots+r_t$, 
there are SOS matrices $S_j(\bx)\in\bS[\bx]^{r_j}$, $j\in[t]$,
such that 
\[
	\lram{S(\bx)}{H(\bx)}{p}=\sum_{j=1}^t \lram{S_j(\bx)}{H_j(\bx)}{p}.
\]
\end{lemma}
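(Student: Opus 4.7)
The plan is to reduce this to a direct bookkeeping argument built on the SOS factorization of $\Sigma(\bx)$. Since $\Sigma(\bx)\in\bS[\bx]^{pr}$ is SOS, I would first write $\Sigma(\bx)=T(\bx)^{\intercal}T(\bx)$ for some polynomial matrix $T(\bx)$ with $pr$ columns, and partition these columns into $p$ groups of $r$ columns each (matching the outer $I_p\otimes(\cdot)$ structure), then refine each group into consecutive sub-blocks of sizes $r_1,\ldots,r_t$ (matching the block-diagonal structure of $H$). Writing $T=[T_1,\ldots,T_p]$ with each $T_i$ of size $l\times r$, and further $T_i=[T_i^{(1)},\ldots,T_i^{(t)}]$ with $T_i^{(k)}$ of size $l\times r_k$, the $(i,j)$ block of $\Sigma$ is $\Sigma_{ij}=T_i^{\intercal}T_j$, whose $(k,k)$ sub-block equals $(T_i^{(k)})^{\intercal}T_j^{(k)}$.

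Next I would unpack the bilinear form $\lram{\cdot}{\cdot}{p}$. A direct computation from the definition, using $(I_p\otimes H)_{ij}=\delta_{ij}H$, shows that for any $A\in\RR^{pr\times pr}$ with $r\times r$ blocks $A_{ij}$, one has $\lram{A}{H}{p}_{ij}=\langle A_{ji},H\rangle$. When $H=\diag(H_1,\ldots,H_t)$, this inner product collapses to the diagonal contributions $\sum_{k=1}^{t}\langle (A_{ji})^{(k,k)},H_k\rangle$, since the off-diagonal blocks of $H$ vanish and kill the cross terms. Applied to $A=\Sigma$, this yields
\[
\lram{\Sigma}{H}{p}_{ij}=\sum_{k=1}^{t}\langle (T_{j}^{(k)})^{\intercal}T_{i}^{(k)},\,H_k\rangle.
\]

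Finally, for each $k\in[t]$ I would define $\wt{T}_k\coloneqq[T_1^{(k)},T_2^{(k)},\ldots,T_p^{(k)}]$, which has $pr_k$ columns, and set $\Sigma_k(\bx)\coloneqq\wt{T}_k(\bx)^{\intercal}\wt{T}_k(\bx)\in\bS[\bx]^{pr_k}$. Then $\Sigma_k$ is manifestly an SOS matrix, and its $(i,j)$ block equals $(T_i^{(k)})^{\intercal}T_j^{(k)}$, so the same identity for $\lram{\cdot}{\cdot}{p}$ gives $\lram{\Sigma_k}{H_k}{p}_{ij}=\langle (T_j^{(k)})^{\intercal}T_i^{(k)},H_k\rangle$. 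Summing over $k$ and comparing entrywise with the display above proves the claim. The only substantive step is the identity $\lram{A}{H}{p}_{ij}=\langle A_{ji},H\rangle$ together with its block-diagonal specialization; everything else is column-partitioning and reassembly (note that for the bilinear form on the right-hand side to be well-defined one should read the size of $\Sigma_j$ as $pr_j\times pr_j$ rather than $r_j\times r_j$).
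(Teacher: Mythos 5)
Your proof is correct and follows essentially the same route as the paper: your matrices $\Sigma_k=\wt{T}_k^{\intercal}\wt{T}_k$ are exactly the $pr_k\times pr_k$ principal submatrices of $\Sigma$ that the paper selects (the paper gets their SOS property from Lemma \ref{prop::submatrix}, which is itself proved by the same column-extraction argument), and your computation of $\lram{\cdot}{\cdot}{p}$ just spells out the step the paper labels ``easy to see.'' Your observation that $\Sigma_j$ should live in $\bS[\bx]^{pr_j}$ rather than $\bS[\bx]^{r_j}$ is also right; the paper's own proof constructs $pr_j\times pr_j$ blocks.
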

\begin{proof}
   For each $j\in[t]$, let $S_j(\bx)$ be the $pr_j\times pr_j$ principal submatrix of $S(\bx)$ whose rows and columns are indexed by 
   \[
   \left(\sum_{\ell=1}^{j-1}r_{\ell}+1, \sum_{\ell=1}^{j}r_{\ell}, 
   r+\sum_{\ell=1}^{j-1}r_{\ell}+1, r+\sum_{\ell=1}^{j}r_{\ell}, \ldots,
   (p-1)r+\sum_{\ell=1}^{j-1}r_{\ell}+1, (p-1)r+\sum_{\ell=1}^{j}r_{\ell}\right).
   \]
   As $H(\bx)$ is block diagonal, by the definition of the mapping $\lram{.}{.}{p}$, 
   it is easy to see that 
\[
\lram{S(\bx)}{H(\bx)}{p}=\sum_{j=1}^t \lram{S_j(\bx)}{H_j(\bx)}{p}.
\]
By Lemma \ref{prop::submatrix}, each $S_j(\bx)$ is an SOS matrix.
\end{proof}


The \emph{matrix quadratic module} $\QM^p(\mathbf{H})$ generated by $\mathbf{H}$ is defined as the cone
\[
	\QM^p(\bH)\coloneqq\left\{S_0(\bx)+\sum_{j=1}^t\lram{S_j(\bx)}{H_j(\bx)}{p}\ 
 \middle\vert \
S_0\in\bS[\bx]^{p}, S_j\in\bS[\bx]^{pr_j}, j\in[t],\ \text{are SOS}
\right\}.
\]
The $k$-th truncated matrix quadratic module $\QM^p_k(\mathbf{H})$ is a subcone of $\QM^p(\mathbf{H})$ in which the degrees of the polynomial matrices $S_0(\bx)$ and $\sum_{j=1}^t\lram{S_j(\bx)}{H_j(\bx)}{p}$ are bounded by $2k$.
%
\begin{assumption}\label{assump_arch}
{\rm
$\QM^p(\bH)$ is {\itshape Archimedean}, i.e.,
there is $C>0$ such that $C - \Vert \bx\Vert^2\in\QM^1(\bH)$.}
\end{assumption}
\begin{theorem}{\upshape (Scherer-Hol's Positivestellensatz)}\label{th::psatz}
Let Assumption \ref{assump_arch} 
and $F(\bx)\in\bS[\bx]^{p}$ be PD on 
$\{\bx\in\RR^n \mid H_j(\bx)\succeq 0, \ j\in[t]\}$. Then $F(\bx)\in\QM^p(\bH)$.
\end{theorem}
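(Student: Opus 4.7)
The plan is to reduce the matrix-valued Positivstellensatz to Putinar's classical scalar Positivstellensatz through an auxiliary-variable scalarization. I introduce a fresh variable tuple $\bv=(v_1,\ldots,v_p)$ and the scalar polynomial $\tilde F(\bx,\bv):=\bv^{\intercal}F(\bx)\bv\in\RR[\bx,\bv]$; positive definiteness of $F$ on $K:=\{\bx\in\RR^n:H_j(\bx)\succeq 0,\ j\in[t]\}$ translates to $\tilde F(\bx,\bv)>0$ whenever $\bx\in K$ and $\bv\neq\mathbf{0}$, which after restricting $\bv$ to the unit sphere $\{\|\bv\|^2=1\}$ becomes a uniform positivity on a compact set. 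Similarly scalarize each matrix constraint by $\tilde H_j(\bx,\bw_j):=\bw_j^{\intercal}H_j(\bx)\bw_j\in\RR[\bx,\bw_j]$ with fresh $\bw_j\in\RR^{r_j}$, normalized by $\|\bw_j\|^2=1$.

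I would then proceed in three steps. First, assemble a compact scalar semialgebraic set using the scalarized constraints $\tilde H_j\geq 0$ (or, if needed to faithfully cut out $K$, the full system of principal minors of each $H_j$) together with the sphere normalizations; the scalar quadratic module here is Archimedean, since the matrix Archimedean hypothesis gives a bound on $\|\bx\|^2$ via $(C-\|\bx\|^2)I_p\in\QM^p(\bH)$, while $\|\bv\|^2$ and $\|\bw_j\|^2$ are bounded by construction. Second, apply scalar Putinar to $\tilde F$ on this compact set to obtain a representation $\tilde F=\sigma_0+\sum_j\sigma_j\,\tilde H_j$ plus terms contributed by the sphere equality constraints. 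Third, extract the matrix decomposition by isolating the bi-homogeneous component of bi-degree $2$ in $\bv$ and in each $\bw_j$: by Proposition \ref{prop::SOSrep}, the coefficients of $v_iv_k$ and of $w_{j,i}w_{j,k}$ in a scalar SOS polynomial arrange into SOS matrices $\Sigma_0\in\bS[\bx]^p$ and $\Sigma_j\in\bS[\bx]^{pr_j}$, and the scalar product $\sigma_j\,\tilde H_j$ reassembles into the matrix pairing $\lram{\Sigma_j}{H_j}{p}$, yielding $F\in\QM^p(\bH)$.

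The hard part will be the extraction step: showing that the bi-homogeneous truncation of a scalar SOS representation yields genuine SOS \emph{matrices} in $\bx$ rather than merely symmetric polynomial matrices. One must handle cross terms introduced by the sphere constraints (the equalities $\|\bv\|^2=1$ can spoil homogeneity unless one first clears them by homogenization) and verify the algebraic identity that the bi-degree-$2$ coefficient matrix of a scalar SOS polynomial is itself an SOS matrix. As a backup route when this bookkeeping becomes unwieldy, one can instead argue by a Hahn-Banach separation in $\bS[\bx]^p$: separate $F$ from $\QM^p(\bH)$ by a linear functional $L$, reduce fiber-wise to scalar Putinar-Haviland via the functionals $L_M(f):=L(f\cdot M)$ for $M\in\bS^p_+$, and patch the resulting scalar representing measures into a single PSD matrix-valued Borel measure on $K$ by a polarization argument in $M$ together with Lemma \ref{prop::block}; the patching is the analogous delicate point in this alternative approach.
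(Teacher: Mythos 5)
Your scalarization breaks down at the very first step, before Putinar is ever applied. The set you build in the variables $(\bx,\bv,\bw_1,\ldots,\bw_t)$ from the constraints $\bw_j^{\intercal}H_j(\bx)\bw_j\ge 0$ and $\Vert\bw_j\Vert^2=1$ projects in $\bx$ onto $\{\bx:\lambda_{\max}(H_j(\bx))\ge 0,\ j\in[t]\}$, not onto $K=\{\bx:H_j(\bx)\succeq 0,\ j\in[t]\}$: a point of this set only certifies nonnegativity of $H_j(\bx)$ in the single direction $\bw_j$, so it contains points with $\bx\notin K$, where $F(\bx)$ (hence $\bv^{\intercal}F(\bx)\bv$) need not be positive. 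Scalar Putinar therefore cannot be invoked for $\tilde F$ on that set. Your parenthetical fix --- replacing the constraints by all principal minors of each $H_j$ --- does cut out $K$, but then Putinar returns a certificate in the quadratic module generated by those minors, and there is no mechanism to convert weighted sums of principal minors into the pairings $\lram{\Sigma_j}{H_j}{p}$; the conclusion $F\in\QM^p(\bH)$ does not follow. There is also a structural mismatch in the extraction step even granting a scalar identity: the scalarization of a genuine matrix term $\lram{T^{\intercal}T}{H_j}{p}$ is $\bv^{\intercal}\lram{T^{\intercal}T}{H_j}{p}\bv=U(\bx,\bv)H_j(\bx)U(\bx,\bv)^{\intercal}$ with $U$ \emph{linear in $\bv$ and free of $\bw_j$}, which is not of the form $\sigma_j(\bx,\bv,\bw_j)\,\bw_j^{\intercal}H_j(\bx)\bw_j$; correspondingly $\tilde F$ has $\bw_j$-degree $0$ while $\sigma_j\tilde H_j$ has $\bw_j$-degree at least $2$, so the bi-degree bookkeeping you propose cannot close without heavy use of the sphere equalities, and you have not shown it does.

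For comparison, the paper does not reprove Scherer--Hol at all: it cites \cite[Corollary 1]{SH2006} for the single-constraint case $t=1$ and reduces $t>1$ to it by stacking $H_1,\ldots,H_t$ into the block-diagonal matrix $\diag{H_1,\ldots,H_t}$ and splitting the resulting pairing with Lemma \ref{prop::block}. If you want a self-contained proof, your ``backup'' route (separation of $F$ from the closed convex cone $\QM^p(\bH)$, scalar Haviland/Putinar applied to the functionals $L(\,\cdot\,M)$ for $M\in\bS^p_+$, and reassembly into a PSD matrix-valued measure) is essentially the argument used in \cite{SH2006} and in \cite[Theorem 10.25]{Sch2020}; but as written it is only a sketch, and the two delicate points you flag --- closedness of the cone in a suitable topology and the polarization/patching of the scalar measures into one matrix measure --- are precisely where the real work lies. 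As it stands, neither route in your proposal constitutes a proof.
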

\begin{proof}
It was proved in  \cite[Corollary 1]{SH2006} for the case $t=1$.
The case $t>1$ can be derived from \cite[Corollary 1]{SH2006} and Lemma \ref{prop::block}.
\end{proof}


\section{Positivstellens\"atze for polynomial matrices with UQs under the Archimedean condition}\label{sec::archi}
In this section, assuming the Archimedean condition,
we shall present a matrix-valued Positivstellensatz incorporating universal quantifiers,  
along with a sparse version in the presence of correlative sparsity.

\subsection{A Positivstellensatze for polynomial matrices with UQs}
Recall the set $\X$ in \eqref{defineX}.
Throughout the paper, let $\nu$ be a fixed Borel measure on $\RR^m$ with support $\supp(\nu)=\Y$ 
and satisfying $\int_{\Y}|h(\by)|\ud\nu(\by)<\infty$ for all $h(\by)\in\RR[\by]$.
Similarly to the scalar case in \cite{HKN2024}, let us define the \emph{matrix quadratic module}
associated with $(G, \nu)$ as follows.
\begin{definition}\label{def::MQM}{\rm
The matrix quadratic module $\QM^p(G, \nu)$ generated by $G$ and $\nu$ is defined as
\[
	\QM^p(G,
	\nu)\coloneqq\left\{S_0+\int_{\Y}\lram{S}{G}{p}\ud\nu(\by)\ \middle\vert \
S_0\in\bS[\bx]^{p}, S\in\bS[\bx, \by]^{pq} \,\,\text{are SOS matrices}
\right\}.
\]   }
\end{definition}

Next, we derive a Positivstellensatze for polynomial matrices with UQs, providing SOS-structured 
characterizations for polynomial matrices that are PD over $\X$.
\begin{assumption}\label{assump2}
{\rm
$\QM^p(G, \nu)$ is {\itshape Archimedean}, i.e.,
there is $C>0$ such that $C - \Vert \bx\Vert^2\in\QM^1(G, \nu)$.}
\end{assumption}

Consider the 
Carleman condition imposed on $\nu$ which is 
automatically satisfied when $\Y$ is compact.
\begin{assumption}\label{assump::carleman}
	The Borel measure $\nu$ satisfies the multivariate Carleman condition
	\[
		\sum_{d=1}^\infty \left(\int_{\Y}
		y_j^{2d}\ud\nu(\by)\right)^{-\frac{1}{2d}}=\infty,\quad \forall j\in
		[m].
	\]
\end{assumption}
\begin{proposition}\cite[Proposition 3.2]{HKN2024}\label{prop::dense}
	Suppose that $\nu$ satisfies Assumption \ref{assump::carleman}.
	Then, SOS polynomials are dense in the cone of nonnegative
	functions in $L^2(\RR^m, \nu)$. 
\end{proposition}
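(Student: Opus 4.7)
The plan is to exploit the classical fact that Carleman's condition renders the measure $\nu$ \emph{determinate}, from which one deduces that the polynomial algebra $\RR[\by]$ is dense in $L^2(\RR^m,\nu)$. This polynomial-density result is a well-known consequence of Carleman's theorem in the theory of the multivariate moment problem (see, e.g., Akhiezer's monograph, or Berg--Christensen--Jensen). Note first that $\nu$ is automatically finite (apply the integrability hypothesis to $h\equiv 1$), so the $L^p(\nu)$ spaces are nested, and a nonnegative $f\in L^2(\nu)$ satisfies $\sqrt{f}\in L^2(\nu)$ because $\int (\sqrt{f})^2\,\ud\nu=\int f\,\ud\nu<\infty$.

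Given a nonnegative $f\in L^2(\nu)$, I would first truncate: set $f_{M,R}=\min(f,M)\cdot\mathbf{1}_{\{\Vert\by\Vert\le R\}}$. By dominated convergence $f_{M,R}\to f$ in $L^2(\nu)$ as $M,R\to\infty$, so it suffices to approximate a bounded, compactly supported nonnegative function in $L^2(\nu)$ by SOS polynomials. Set $g=\sqrt{f_{M,R}}$, which is bounded by $\sqrt{M}$ and compactly supported. Using the density statement, choose polynomials $p_n$ with $p_n\to g$ in $L^2(\nu)$. A second truncation-and-re-approximation (cap $p_n$ at $\pm(\sqrt{M}+1)$ via a continuous clipping, then polynomially re-approximate the clipped function on a large ball) produces polynomials $q_n$ with $\|q_n\|_\infty\le C=\sqrt{M}+1$ and still $q_n\to g$ in $L^2(\nu)$. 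Then $q_n^2-f_{M,R}=(q_n-g)(q_n+g)$, and
\[
\|q_n^2-f_{M,R}\|_{L^2(\nu)}\le \|q_n+g\|_{L^\infty}\,\|q_n-g\|_{L^2(\nu)}\le (C+\sqrt{M})\|q_n-g\|_{L^2(\nu)}\to 0.
\]
Since each $q_n^2$ is a square (hence an SOS polynomial), combining this with the truncation step shows that $f$ lies in the $L^2(\nu)$-closure of the SOS cone.

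The main obstacle is that the squaring map is not continuous on $L^2(\nu)$, so an $L^2$-approximation of $\sqrt{f}$ does not automatically yield an $L^2$-approximation of $f$ by squares. This is precisely what the boundedness truncation is designed to fix: by forcing the polynomial approximants to have a uniform $L^\infty$ bound on the relevant region, one recovers enough continuity of the squaring map for the argument to close. An alternative route, which avoids the double truncation, is to upgrade the density assertion to density of polynomials in $L^4(\nu)$ (again a consequence of Carleman, as the determinacy argument applies with the same moment growth) and then use H\"older's inequality directly on $(q_n-g)(q_n+g)$; either variant yields the conclusion.
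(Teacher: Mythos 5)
First, note that the paper does not prove this proposition at all: it is quoted verbatim from \cite[Proposition 3.2]{HKN2024}, so there is no internal proof to compare against. Judged on its own terms, your primary argument has a genuine gap at the clipping step. After truncating to $f_{M,R}$ and setting $g=\sqrt{f_{M,R}}$, you claim to produce polynomials $q_n$ with $\Vert q_n\Vert_\infty\le \sqrt{M}+1$ that still converge to $g$ in $L^2(\nu)$. But a non-constant polynomial is unbounded on $\RR^m$, and the sup norm in your displayed estimate must be taken over $\supp(\nu)=\Y$, which is in general unbounded --- indeed, the only reason the Carleman condition appears at all is to handle non-compact $\Y$ (for compact $\Y$ the statement is elementary). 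Re-approximating the clipped function uniformly on a large ball controls $q_n$ only on that ball; outside it, $q_n^2$ can be enormous while $f_{M,R}$ vanishes, and $\nu$ still charges that region. So $\Vert q_n+g\Vert_{L^\infty}=\infty$ and the inequality $\Vert q_n^2-f_{M,R}\Vert_{L^2(\nu)}\le\Vert q_n+g\Vert_{L^\infty}\Vert q_n-g\Vert_{L^2(\nu)}$ gives nothing. Your own phrase ``a uniform $L^\infty$ bound on the relevant region'' is exactly where the argument breaks: the relevant region is all of $\Y$.

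The alternative route you sketch at the end --- approximate $\sqrt{f}$ in $L^4(\nu)$ and use H\"older on $(q_n-g)(q_n+g)$ --- is the correct and standard argument, and it is essentially how the cited proof goes; note that $\sqrt{f}\in L^4(\nu)$ is automatic from $f\in L^2(\nu)$ since $\int(\sqrt{f})^4\ud\nu=\int f^2\ud\nu<\infty$, so no truncation is needed at all. However, your justification of the key ingredient is not adequate. Density of polynomials in $L^4(\nu)$ does not follow from determinacy: determinacy is essentially an $L^2$-level statement, and there exist determinate measures for which polynomials fail to be dense in $L^p$ for $p>2$. What one actually needs is the stronger theorem (Berg--Christensen in one variable, with a multivariate analogue via quasi-analytic vectors or coordinate-wise Carleman conditions) that the Carleman condition implies density of $\RR[\by]$ in $L^p(\RR^m,\nu)$ for every $p\in[1,\infty)$; this is a separate result requiring the full strength of the divergence $\sum_{d}\bigl(\int_{\Y} y_j^{2d}\ud\nu\bigr)^{-1/(2d)}=\infty$, not merely the determinacy it implies. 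With that theorem cited or proved, the H\"older argument closes the proof in a few lines; without it, the proposal is incomplete.
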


\begin{proposition}\label{prop::seteq}
    Suppose that $\nu$ satisfies Assumption \ref{assump::carleman}. Then  
    \begin{equation}\label{eq::seteq}
    \X=\{\bx\in\RR^n \mid H(\bx)\succeq 0,\ \forall H\in \QM^p(G, \nu)\}.
    \end{equation}
\end{proposition}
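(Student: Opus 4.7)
\textbf{Proof plan for Proposition \ref{prop::seteq}.}

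The plan is a two-way inclusion. For the inclusion $\X\subseteq\{\bx:H(\bx)\succeq 0,\ \forall H\in\QM^p(G,\nu)\}$, I would fix $\bx\in\X$ and a generic element $H=\Sigma_0+\int_\Y\lram{\Sigma(\bx,\by)}{G(\bx,\by)}{p}\ud\nu(\by)$ of $\QM^p(G,\nu)$. The SOS matrix $\Sigma_0(\bx)$ is PSD by definition. For the integrand, it suffices to show that $\lram{A}{B}{p}\succeq 0$ whenever $A\in\bS_+^{pq}$ and $B\in\bS_+^q$: writing $A=Q^\intercal Q$ with $Q=[Q_1,\ldots,Q_p]$, a direct computation gives $\bigl(\lram{A}{B}{p}\bigr)_{ij}=\langle Q_iB^{1/2},Q_jB^{1/2}\rangle$, so the result is a Gram matrix and hence PSD. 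Since $\supp(\nu)=\Y$ and $G(\bx,\by)\succeq 0$ for every $\by\in\Y$, the integrand is PSD $\nu$-a.e., and integration against the nonnegative measure $\nu$ preserves PSD-ness.

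For the reverse inclusion, I would argue contrapositively: assume $\bx_0\notin\X$ and construct $H\in\QM^p(G,\nu)$ with $H(\bx_0)\not\succeq 0$. By hypothesis there exist $\by_0\in\Y$ and a vector $\bv\in\RR^q$ with $g_0(\by):=\bv^\intercal G(\bx_0,\by)\bv$ satisfying $g_0(\by_0)<0$. Since $g_0\in\RR[\by]$ is continuous, pick an open neighborhood $U\subset\RR^m$ of $\by_0$ on which $g_0<\tfrac12 g_0(\by_0)<0$, and then choose a continuous compactly supported $\chi\colon\RR^m\to[0,1]$ with $\chi(\by_0)=1$ and $\supp(\chi)\subseteq U$. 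Because $\nu(\Y)<\infty$ (apply the integrability assumption to $h\equiv 1$) and $\chi$ is bounded, we have $\chi\in L^2(\RR^m,\nu)$; moreover, since $\by_0\in\supp(\nu)$, the set $\{\chi\geq 1/2\}$ has positive $\nu$-measure, giving $\int\chi g_0\,\ud\nu\leq\tfrac12 g_0(\by_0)\int\chi\,\ud\nu<0$.

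The key step is now to invoke Proposition \ref{prop::dense}: choose SOS polynomials $\sigma_n\in\RR[\by]$ with $\sigma_n\to\chi$ in $L^2(\RR^m,\nu)$. Since $g_0$ is a polynomial, $g_0^2\in\RR[\by]$ is $\nu$-integrable, i.e.\ $g_0\in L^2(\nu)$, so Cauchy--Schwarz gives $\int\sigma_n g_0\,\ud\nu\to\int\chi g_0\,\ud\nu<0$, and for some $n$ large enough we have $c:=-\int\sigma_n g_0\,\ud\nu>0$. Fix any nonzero $\bw\in\RR^p$ and set
\[
\Sigma(\bx,\by)\;:=\;\sigma_n(\by)\,(\bw\bw^\intercal\otimes\bv\bv^\intercal)\in\bS[\bx,\by]^{pq}.
\]
Writing $\sigma_n=\sum_i r_i^2$ yields $\Sigma=\sum_i\bigl(r_i(\bw\otimes\bv)\bigr)\bigl(r_i(\bw\otimes\bv)\bigr)^\intercal$, so $\Sigma$ is an SOS matrix. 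A direct block computation of $\lram{\cdot}{\cdot}{p}$ for rank-one tensors gives $\lram{\bw\bw^\intercal\otimes\bv\bv^\intercal}{B}{p}=(\bv^\intercal B\bv)\bw\bw^\intercal$, so
\[
H(\bx)\;:=\;\int_\Y\lram{\Sigma(\bx,\by)}{G(\bx,\by)}{p}\ud\nu(\by)\in\QM^p(G,\nu),\qquad H(\bx_0)=-c\,\bw\bw^\intercal\not\succeq 0,
\]
which completes the reverse inclusion.

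The main obstacle in this argument is the hard direction, specifically producing an SOS weight whose integration against the \emph{sign-constrained} polynomial $g_0$ remains negative; this is exactly where the Carleman assumption is used, by way of Proposition \ref{prop::dense}, to replace a convenient bump function by an SOS polynomial while controlling the error via $L^2(\nu)$-Cauchy--Schwarz. The rank-one tensor structure $\bw\bw^\intercal\otimes\bv\bv^\intercal$ reduces the matrix-valued problem to the scalar setting of \cite{HKN2024}, and lets us extract a witness $H$ whose single negative direction is the prescribed vector $\bw$.
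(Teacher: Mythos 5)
Your proposal is correct and follows essentially the same route as the paper's proof: reduce to a scalar sign condition via a rank-one direction $\bv$, use Proposition \ref{prop::dense} (hence the Carleman assumption) to replace a nonnegative bump function by SOS polynomials in $L^2(\nu)$, and pair with a rank-one SOS matrix ($\bw\bw^\intercal\otimes\bv\bv^\intercal$ in your version, $I_p\otimes\bv\bv^\intercal$ followed by a trace in the paper's) to contradict membership in the right-hand set. The only differences are cosmetic: you phrase the hard direction as a direct construction rather than a contradiction, and you spell out the easy inclusion (the Gram-matrix computation for $\lram{A}{B}{p}$) which the paper leaves implicit.
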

\begin{proof}
    By the definition of $\QM^p(G, \nu)$, we only need to prove that 
    $\X$ contains the set on the right-hand side of the equation in \eqref{eq::seteq}.
 Fix $\bu\in\RR^n$ with $H(\bu)\succeq 0$ for all $H\in \QM^p(G, \nu)$. 
Suppose on the contrary that $\bu\not\in\X$,
 i.e., there exists $\bw\in\Y$ such that $G(\bu, \bw)\not\succeq 0$. Then, there is a ball
   $\mathcal{O}\subset\RR^m$ with a radius $\rho>0$ around $\bw$ such that
   $G(\bu, \by)\not\succeq 0$ on $2\mathcal{O}$.
   We may assume that there exists $\bv\in\RR^{q}$ and $\delta>0$ such that 
    $\bv^{\intercal}G(\bu, \by)\bv\le -\delta$ on $2\mathcal{O}$.
    Define a continuous function $h(\by)$ on $\RR^m$ by $h(\by)=2\rho-\Vert\by-\bw\Vert$
    for $\by\in2\mathcal{O}$ and $h(\by)=0$ otherwise. 
    By Proposition~\ref{prop::dense}, there exists a sequence of SOS polynomials 
    $\{\sigma_k\}_k$ in $\RR[\by]$ that converges to $h$ in the $L^2$-norm.
    Hence, 
\[
    \begin{aligned}
		&\lim_{k\to\infty}\int_{\Y}\bv^{\intercal}G(\bu, \by)\bv \sigma_k(\by)\ud\nu(\by)
  =\int_{\Y}\bv^{\intercal}G(\bu, \by)\bv h(\by)\ud\nu(\by)\\
        =&\int_{\Y\cap 2\mathcal{O}}\bv^{\intercal}G(\bu,
		\by)\bv (2\rho-\Vert\by-\bw\Vert)\ud\nu(\by)
		\le \int_{\Y\cap
			\mathcal{O}}-\delta\rho\ud\nu(\by)
	= -\delta\rho\nu(\Y\cap\mathcal{O})<0,
    \end{aligned}
\]
    where the last inequality is due to the fact that $\supp(\nu)=\Y$ and thus $\nu(\Y\cap\mathcal{O})>0$. 
Note that for all $k\in\N$,
\[
		\int_{\Y}\bv^{\intercal}G(\bu, \by)\bv \sigma_k(\by)\ud\nu(\by)
     =\frac{1}{p}\tr{\int_{\Y}\lram{\sigma_k(\by)I_p\otimes\bv\bv^{\intercal}}{G(\bu, \by)}{p}\ud\nu(\by)}\ge 0,
\]
since $\sigma_k(\by) I_p\otimes\bv\bv^{\intercal}$ is an SOS and $H(\bu)\succeq 0$ for all $H\in \QM^p(G, \nu)$.
A contradiction follows.
\end{proof}

Now, we present our main result concerning a Positivstellensatz for polynomial matrices with UQs.
\begin{theorem}\label{th::SHpsatz}
Suppose that Assumptions \ref{assump2} and \ref{assump::carleman} hold. 
If $F(\bx)\in\bS[\bx]^{p}$ is PD on $\X$, then $F(\bx)\in\QM^p(G, \nu)$.
\end{theorem}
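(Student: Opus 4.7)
The plan is to argue by contradiction in the standard Hahn-Banach plus matrix moment problem paradigm. Suppose $F\notin\QM^p(G,\nu)$. First I would perform a Hahn-Banach separation on the finite-dimensional truncations $\bS[\bx]^p_{2k}$ and pass to the limit via the usual Archimedean compactness argument, producing a nonzero linear functional $L\colon\bS[\bx]^p\to\RR$ with $L(F)\le 0$ and $L\ge 0$ on $\QM^p(G,\nu)$. Assumption \ref{assump2} forces $I_p\in\QM^p(G,\nu)$, so one normalizes $L(I_p)=1$; applying $L$ to the elements $(C-x_i^2)I_p\in\QM^p(G,\nu)$ yields uniform moment bounds of the form $L(x_i^{2k}I_p)\le C^k$.

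The core of the argument is to produce a representing matrix measure $\Phi\in\mathfrak{M}^p_+(\X)$ for $L$, which contradicts $L(F)\le 0$ by positivity of $F$ on $\X$. I would lift $L$ to a joint functional on $\bS[\bx,\by]^p$ by
\[
\tilde L(K(\bx,\by))\coloneqq\int_{\Y}L(K(\cdot,\by))\ud\nu(\by),
\]
with $L$ applied coefficient-wise in $\by$. Two quick checks show $\tilde L\ge 0$ on the matrix quadratic module $\QM^p(G)$ generated by $G$ in $\bS[\bx,\by]^p$: if $\Sigma_0(\bx,\by)=T^\intercal T$ is an SOS matrix, each specialization $\Sigma_0(\cdot,\by_0)=T(\cdot,\by_0)^\intercal T(\cdot,\by_0)$ is SOS in $\bx$, so $L(\Sigma_0(\cdot,\by_0))\ge 0$ and $\tilde L(\Sigma_0)\ge 0$; and $\tilde L(\lram{\Sigma}{G}{p})=L\bigl(\int_{\Y}\lram{\Sigma}{G}{p}\ud\nu\bigr)\ge 0$ for any SOS $\Sigma$ by construction of $L$. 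A GNS construction on the $\RR[\bx,\by]^p$-columns with inner product $\langle u,v\rangle_{\tilde L}\coloneqq\tilde L(uv^\intercal+vu^\intercal)$ then makes each $x_i$ a bounded self-adjoint multiplication operator (via Archimedean in $\bx$) and each $y_j$ essentially self-adjoint on analytic vectors (via Assumption \ref{assump::carleman} and Nelson's analytic-vector theorem). The resulting joint spectral measure has support inside $\tilde\X\coloneqq\{(\bx,\by):G(\bx,\by)\succeq 0\}$ thanks to nonneg of $\tilde L$ on $\lram{\Sigma}{G}{p}$ for all SOS $\Sigma$, and polarizing yields $\tilde\Phi\in\mathfrak{M}^p_+(\tilde\X)$ representing $\tilde L$. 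The tensor identity $\tilde L(h(\bx)g(\by)I_p)=L(hI_p)\int_{\Y}g\ud\nu$ combined with Carleman-uniqueness forces the product structure $\tilde\Phi=\Phi\otimes\nu$ for a uniquely determined $\Phi\in\mathfrak{M}^p_+(\RR^n)$ representing $L$, and then $\supp(\tilde\Phi)=\supp(\Phi)\times\Y\subseteq\tilde\X$ translates precisely into $\supp(\Phi)\subseteq\X$. Finally, since $F\succ 0$ on $\X\supseteq\supp(\Phi)$ and $\Phi$ is nonzero PSD (from $L(I_p)=1$), one gets $L(F)=\int_{\X}F\ud\Phi>0$, contradicting $L(F)\le 0$.

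The hard part will be the joint spectral/matrix-moment step: constructing $\tilde\Phi$ supported on $\tilde\X$ from mere nonneg of $\tilde L$ on $\QM^p(G)$ under the mixed Archimedean-in-$\bx$/Carleman-in-$\by$ regime. The matrix Haviland theorem (Theorem \ref{th::haviland}) alone is insufficient, since positivity of $\tilde L$ on the full cone of PSD matrix polynomials on $\tilde\X$ is unavailable; the GNS-plus-Nelson-analytic-vector route sketched above circumvents this, but handling the unbounded $y_j$-operators, securing a genuine joint commuting spectral measure, and translating the abstract positivity $\tilde L(\lram{\Sigma}{G}{p})\ge 0$ into the operator-level inequality $G(X,Y)\succeq 0$ so that the spectral support lands in $\tilde\X$, are the delicate points. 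This is where Assumption \ref{assump::carleman} is used essentially; once that step is in hand, the product decomposition via Carleman-uniqueness and the final contradiction are formal.
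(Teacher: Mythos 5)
Your proposal and the paper's proof share no overlap beyond the separation-of-concerns: the paper proves Proposition \ref{prop::seteq} (using the Carleman condition only to show, via Proposition \ref{prop::dense}, that $\X$ coincides with the positivity set $\{\bx : H(\bx)\succeq 0,\ \forall H\in\QM^p(G,\nu)\}$) and then invokes the abstract Archimedean Positivstellensatz for matrix algebras of polynomials \cite[Theorem 10.25]{Sch2020} applied to the quadratic module $\QM^p(G,\nu)\subset\bS[\bx]^p$. Since that module lives in the variables $\bx$ alone and is Archimedean, all the functional analysis is confined to bounded operators and is outsourced to a citation; the variables $\by$ never enter a moment problem. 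Your route instead re-derives the abstract theorem from scratch via separation plus a representing-measure construction, and moreover lifts the problem to the joint variables $(\bx,\by)$, which is precisely what imports the unbounded-operator difficulties. The preliminary steps you describe (separation using $I_p$ as an algebraic interior point, the bounds $L(x_i^{2k}I_p)\le C^k$, positivity of $\tilde L$ on the module generated by $G$ in $\bS[\bx,\by]^p$, and the product/determinacy argument at the end) are all sound and match the scalar strategy of \cite{HKN2024}.

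The genuine gap is the step you yourself flag: producing a matrix-valued measure $\tilde\Phi\in\mathfrak{M}^p_+(\RR^{n+m})$ representing $\tilde L$ \emph{and} supported in $\{(\bx,\by): G(\bx,\by)\succeq 0\}$, given only that $\tilde L\ge 0$ on SOS matrices and on $\lram{\Sigma}{G}{p}$ for SOS $\Sigma$, in a regime where the module is not Archimedean in $\by$. This is a matrix-valued analogue of Lasserre/Nussbaum-type $K$-moment results under the multivariate Carleman condition; it does not exist off the shelf in the form you need, and "GNS plus Nelson's theorem plus a joint spectral measure" is a program, not a proof. In particular you must (a) verify that the commuting symmetric operators $X_i, Y_j$ on the GNS space admit strongly commuting self-adjoint extensions (quasi-analytic vectors for the $Y_j$, which requires growth estimates of the form $\tilde L(y_j^{2k}\Sigma)$ controlled by the moments of $\nu$ uniformly in $\Sigma$), and (b) convert the inequality $\tilde L(\lram{\Sigma}{G}{p})\ge 0$ into support localization for an operator pencil $G(X,Y)$ built from possibly unbounded $Y_j$ -- this is exactly where the content of Proposition \ref{prop::seteq} would have to be re-proved at the measure level. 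Even the boundedness of the $X_i$ on the GNS space is not free: it needs $\tilde L((C-\Vert\bx\Vert^2)\Sigma)\ge 0$ for SOS $\Sigma\in\bS[\bx,\by]^p$, which requires unwinding $C-\Vert\bx\Vert^2\in\QM^1(G,\nu)$ through Lemmas \ref{lem::int} and \ref{lem::S}. None of these steps is likely to fail, but none is carried out, and collectively they constitute the entire mathematical substance of the theorem. The paper's two-line proof shows this machinery is avoidable: once Proposition \ref{prop::seteq} identifies $\X$ as the positivity set of an Archimedean module in $\bS[\bx]^p$, no $\by$-moment problem ever arises. (The paper also records a second, constructive proof for compact $\Y$ via Proposition \ref{prop::main} and Corollary \ref{cor::MPU}, which is closer in spirit to Scherer--Hol but again never leaves the bounded setting.)
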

\begin{proof}
Since the quadratic module $\QM^p(G, \nu)$ is Archimedean and the equality in \eqref{eq::seteq} holds,
the conclusion follows from the fundamental Positivstellensatz for matrix algebras of polynomials
\cite[Theorem 10.25]{Sch2020}.
\end{proof}

Next, we derive a corollary of Theorem \ref{th::SHpsatz}, 
which will be used in Section \ref{sec::nonarchi}. 
\begin{lemma}\label{lem::int}
    Suppose that $S(\bx,\by)\in\bS[\bx,\by]^q$ is an SOS matrix in $\bx$ and $\by$. 
    Then $\int_{\Y}S(\bx,\by)\ud\nu(\by)$ is an SOS matrix in $\bx$.
\end{lemma}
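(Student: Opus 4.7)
The plan is to use the SOS factorization of $\Sigma(\bx,\by)$ to separate the $\bx$-dependence from the $\by$-dependence, integrate out $\by$ against $\nu$, and then recognize the result as an SOS matrix in $\bx$ via Proposition \ref{prop::SOSrep}.

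First I would unpack the SOS assumption: there exists an $l \times q$ polynomial matrix $T(\bx,\by)$ with entries in $\RR[\bx,\by]$ such that $\Sigma(\bx,\by) = T(\bx,\by)^{\intercal} T(\bx,\by)$. Setting $d = \deg_{\bx}(T)$ and expanding $T$ in the basis $[\bx]_d$ from \eqref{eq::ud} with coefficient matrices in $\by$, one obtains
\[
T(\bx,\by) = M(\by)\bigl([\bx]_d \otimes I_q\bigr),
\]
where $M(\by)$ is an $l \times q\snd{d}$ matrix with polynomial entries in $\by$ obtained by stacking the $\bx$-coefficient matrices of $T(\bx,\by)$ side-by-side in the appropriate order.

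Next I would substitute this factorization into $\Sigma$ and pull the $\bx$-dependence outside the integral. Writing $Z(\by) \coloneqq M(\by)^{\intercal} M(\by) \in \bS[\by]^{q\snd{d}}$ (which is PSD for every $\by \in \RR^m$), we get
\[
\Sigma(\bx,\by) = \bigl([\bx]_d \otimes I_q\bigr)^{\intercal} Z(\by) \bigl([\bx]_d \otimes I_q\bigr),
\]
and since every entry of $Z(\by)$ lies in $\RR[\by]$, the standing assumption $\int_{\Y} |h(\by)|\,\ud\nu(\by) < \infty$ for all $h \in \RR[\by]$ guarantees that the entrywise integrals converge. Hence
\[
\int_{\Y} \Sigma(\bx,\by)\,\ud\nu(\by) = \bigl([\bx]_d \otimes I_q\bigr)^{\intercal} \widehat{Z}\,\bigl([\bx]_d \otimes I_q\bigr), \quad \widehat{Z} \coloneqq \int_{\Y} Z(\by)\,\ud\nu(\by) \in \RR^{q\snd{d} \times q\snd{d}}.
\]

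Finally I would verify that $\widehat{Z}$ is PSD: for any $\bv \in \RR^{q\snd{d}}$,
\[
\bv^{\intercal} \widehat{Z} \bv = \int_{\Y} \bv^{\intercal} M(\by)^{\intercal} M(\by) \bv \,\ud\nu(\by) = \int_{\Y} \Vert M(\by)\bv\Vert^2 \,\ud\nu(\by) \ge 0.
\]
With $\widehat{Z} \in \bS_+^{q\snd{d}}$ in hand, Proposition \ref{prop::SOSrep} immediately yields that $\int_{\Y} \Sigma(\bx,\by)\,\ud\nu(\by)$ is an SOS matrix in $\bx$, completing the proof. There is no real obstacle here; the only mild care needed is making sure the expansion of $T$ in the basis $[\bx]_d \otimes I_q$ is written correctly and that integrability of the polynomial entries of $Z(\by)$ against $\nu$ is invoked explicitly.
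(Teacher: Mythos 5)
Your proposal is correct and follows essentially the same route as the paper's proof: factor $\Sigma = T^{\intercal}T$, expand $T(\bx,\by)$ in the basis $[\bx]_d\otimes I_q$ with $\by$-dependent coefficient matrices, pull $([\bx]_d\otimes I_q)$ outside the integral, and observe that the integrated Gram matrix is PSD. Your version is marginally more explicit in checking integrability of the entries and in citing Proposition \ref{prop::SOSrep} for the final step, but the argument is the same.
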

\begin{proof}
  As $S(\bx,\by)$ is an SOS matrix, there exists an $\ell\times q$ polynomial matrix $T(\bx,\by)$ 
  for some $\ell\in\N$ such that $S(\bx,\by)=T(\bx,\by)^{\intercal}T(\bx,\by)$.  
  With $d=\deg_{\bx} T$, we could write $T(\bx,\by)$ as
\[
	T(\bx,\by)=Q(\by)([\bx]_d\otimes I_q) \text{ with } 
 Q(\by)=[Q_1(\by),\ldots,Q_{\snd{d}}(\by)],\quad Q_i(\by)\in\RR[\by]^{\ell\times q},
\]
where $Q(\by)$ is the vector of coefficient matrices of $T(\bx,\by)$ (considered as a polynomial
matrix in $\RR[\bx]^{\ell\times q}$) with respect to $[\bx]_d$. Hence,
\[
\begin{aligned}
\int_{\Y}S(\bx,\by)\ud\nu(\by)
=\int_{\Y}([\bx]_d\otimes I_q)^{\intercal}(Q(\by)^{\intercal}Q(\by))([\bx]_d\otimes I_q)\ud\nu(\by)=([\bx]_d\otimes I_q)^{\intercal}\int_{\Y}Q(\by)^{\intercal}Q(\by)\ud\nu(\by)([\bx]_d\otimes I_q)
\end{aligned}
\]
As $\int_{\Y}Q(\by)^{\intercal}Q(\by)\ud\nu(\by)$ is PSD, 
$\int_{\Y}S(\bx,\by)\ud\nu(\by)$ is an SOS matrix in $\bx$.
\end{proof}

\begin{corollary}\label{cor::pos}
Let $\mathbf{H}\coloneqq\{H_1,\ldots,H_t\}$ where each $H_j\in\bS[\bx]^{r_j}$ for some $r_j\in\N$.
    Suppose that Assumption~\ref{assump::carleman} holds and there exists $C>0$ such that 
    $C - \Vert \bx\Vert^2\in\QM^1(G, \nu)+\QM^1(\bH)$.
If $F(\bx)\in\bS[\bx]^p$ is PD on $\X\cap \{\bx\in\RR^n \mid H_j(\bx)\succeq 0, \ j\in[t]\}$, 
then $F(\bx)\in\QM^p(G, \nu)+\QM^p(\bH)$.
\end{corollary}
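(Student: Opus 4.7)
The plan is to reduce to Theorem \ref{th::SHpsatz} by absorbing the ordinary PMI constraints $H_j\succeq 0$ into a single PMI-with-UQ constraint. Introduce the block-diagonal polynomial matrix
\[
\tilde{G}(\bx,\by)\coloneqq\diag{G(\bx,\by),H_1(\bx),\ldots,H_t(\bx)}\in\bS[\bx,\by]^{\tilde q},\qquad \tilde q\coloneqq q+r_1+\cdots+r_t.
\]
Because each $H_j$ is independent of $\by$, the set $\{\bx\in\RR^n\mid \tilde G(\bx,\by)\succeq 0,\ \forall\by\in\Y\}$ coincides with $\X\cap\{\bx\in\RR^n\mid H_j(\bx)\succeq 0,\ j\in[t]\}$, on which $F$ is PD by hypothesis.

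Next, I would verify that $\QM^p(\tilde G,\nu)$ is Archimedean in the sense of Assumption \ref{assump2}, i.e., that $C-\Vert\bx\Vert^2\in\QM^1(\tilde G,\nu)$. The inclusion $\QM^1(G,\nu)\subset\QM^1(\tilde G,\nu)$ is immediate: pad any SOS weight $\sigma$ for $G$ with zero blocks to land in $\bS[\bx,\by]^{\tilde q}$. For $\QM^1(\mathbf H)\subset\QM^1(\tilde G,\nu)$, note that $\nu(\Y)\in(0,\infty)$ (finite by the standing integrability hypothesis applied to $h\equiv 1$, positive since $\supp(\nu)=\Y$), so any element $\sigma_0+\sum_j\langle\sigma_j,H_j\rangle$ can be rewritten as $\sigma_0+\int_{\Y}\langle\tilde\sigma,\tilde G\rangle\,\ud\nu(\by)$ with the SOS matrix $\tilde\sigma(\bx)\coloneqq\frac{1}{\nu(\Y)}\diag{0_{q\times q},\sigma_1(\bx),\ldots,\sigma_t(\bx)}$. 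Summing the two inclusions expresses $C-\Vert\bx\Vert^2$ as an element of $\QM^1(\tilde G,\nu)$, so Theorem \ref{th::SHpsatz} applies and yields
\[
F(\bx)=\Sigma_0(\bx)+\int_{\Y}\lram{\Sigma(\bx,\by)}{\tilde G(\bx,\by)}{p}\,\ud\nu(\by)
\]
for some SOS matrices $\Sigma_0\in\bS[\bx]^p$ and $\Sigma\in\bS[\bx,\by]^{p\tilde q}$.

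Finally, the block-diagonal shape of $\tilde G$ together with a $\by$-dependent version of Lemma \ref{prop::block} decomposes the integrand as $\lram{\Sigma'}{G}{p}+\sum_{j=1}^t\lram{\Sigma_j}{H_j}{p}$, where $\Sigma'$ and each $\Sigma_j$ are SOS principal submatrices of $\Sigma$ extracted exactly as in the proof of that lemma. The $G$-piece combined with $\Sigma_0$ lies in $\QM^p(G,\nu)$. For each $j$, bilinearity of $\lram{\cdot}{\cdot}{p}$ in its first argument allows the integral to be pulled inside:
\[
\int_{\Y}\lram{\Sigma_j(\bx,\by)}{H_j(\bx)}{p}\,\ud\nu(\by)=\lram{\int_{\Y}\Sigma_j(\bx,\by)\,\ud\nu(\by)}{H_j(\bx)}{p},
\]
and Lemma \ref{lem::int} guarantees that the integrated matrix is SOS in $\bx$, placing this term in $\QM^p(\mathbf H)$. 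Combining yields $F\in\QM^p(G,\nu)+\QM^p(\mathbf H)$. The only non-routine point is checking that Lemma \ref{prop::block} remains valid when the block-diagonal matrix depends on $\by$; this is essentially cosmetic, since its proof only uses the block shape of the second argument and Lemma \ref{prop::submatrix}, neither of which is affected by extra variables.
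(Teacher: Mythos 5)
Your proposal is correct and follows essentially the same route as the paper: form the block-diagonal matrix $\widehat{G}=\diag{G,H_1,\ldots,H_t}$, check that the two quadratic modules agree (which gives both the Archimedean condition for $\QM^1(\widehat{G},\nu)$ and the final decomposition via Lemmas \ref{prop::block} and \ref{lem::int}), and apply Theorem \ref{th::SHpsatz}. The paper states the identity $\QM^p(\widehat{G},\nu)=\QM^p(G,\nu)+\QM^p(\bH)$ without detail; your explicit verification of both inclusions, including the $\nu(\Y)^{-1}$ normalization and the remark about the $\by$-dependent use of Lemma \ref{prop::block}, simply fills in what the paper leaves implicit.
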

\begin{proof}
It is clear that $F(\bx)\succ 0$ on 
$\X\cap \{\bx\in\RR^n \mid H_j(\bx)\succeq 0, \ j\in[t]\}$ if and only if 
$F(\bx)\succ 0$ on 
\[
\left\{\bx\in\RR^n \,\middle|\, \widehat{G}(\bx,\by)\coloneqq\diag{G(\bx, \by), H_1(\bx),\ldots,H_t(\bx)}\succeq 0, 
\ \forall \by\in\Y\right\}.
\]
By Lemmas \ref{prop::block} and \ref{lem::int}, it is easy to see that 
$\QM^p(\widehat{G}, \nu)=\QM^p(G, \nu)+\QM^p(\bH)$.
So, the conclusion follows from Theorem \ref{th::SHpsatz}.
\end{proof}

\subsection{A sparse Positivstellensatz for polynomial matrices with UQs}
There are sparse Positivstellens\"atze for scalar polynomials being positive 
on a basic semialgebraic set in the presence of correlative sparsity; see \cite{GNS2007,LasserreSparsity} where the set is defined by polynomial inequalities 
and \cite{KM2009} where the set is defined PMIs. 
Let $p=1$. Building on the ideas from \cite{GNS2007} and \cite{KM2009}, we now prove a sparse Positivstellensatz for polynomial matrices with UQs.
Let $f\in\RR[\bx]$ and
\begin{equation}\label{def::sparseX}
\widehat{\X}\coloneqq\{\bx\in\RR^n \mid G_j(\bx, \by)\succeq 0, \ \forall j\in[s],\ \by\in \Y\subset\RR^m\},
\end{equation}
where each $G_j\in\bS[\bx, \by]^{q_j}$, $q_j\in\N$. 

\begin{assumption}[correlative sparsity pattern]\label{assump_CS}
{\rm
   Subsets $\{\mI_\ell\}_{\ell\in[t]}$ of $[n]$ and subsets $\{\mJ_\ell\}_{\ell\in[t]}$ of $[s]$ satisfy the following conditions:
   \begin{enumerate}[\upshape (i)]
       \item The {\itshape running intersection property} holds for $\{\mI_\ell\}_{\ell\in[t]}$, i.e.,
       \[
\text{for } \ell=2,\ldots,t,\ \exists k<\ell\quad\text{s.t.}\quad \mathcal{I}_{\ell}\cap \bigcup_{j<\ell}
\mathcal{I}_j\subseteq \mI_k;
\]
       \item For every $\ell\in[t]$ and $j\in\mJ_{\ell}$, $G_j\in\bS[\bx(\mI_{\ell}), \by]^{q_j}$, 
       where $\bx(\mI_{\ell})\coloneqq\{x_i\}_{ i\in\mI_{\ell}}$;
       \item $f$ decomposes as $f=f_1+\cdots+f_t$ with each $f_\ell\in\RR[\bx(\mI_\ell)]$.
   \end{enumerate}
   }
\end{assumption}


For each $\ell\in[t]$, let $\bG^{\ell}\coloneqq\{G_j\}_{ j\in\mJ_{\ell}}$ and $\QM^p(\bG^{\ell}, \nu)$
be the quadratic module generated by $\bG^{\ell}$ and $\nu$ in $\bS[\bx(\mI_{\ell})]^p$, i.e.,
\[
	\QM^p(\bG^{\ell},\nu)\coloneqq\left\{S_0+\sum_{j\in\mJ_{\ell}}\int_{\Y}\lram{S_j}{G_j}{p}\ud\nu(\by)\ \middle\vert \
S_0\in\bS[\bx(\mI_\ell)]^{p}, S_j\in\bS[\bx(\mI_\ell), \by]^{pq_j}, j\in\mJ_{\ell}, \text{ are SOS matrices}
\right\}.
\]

Using the correlative sparsity pattern and the Archimedean condition, 
we are able to derive the following sparse Positivstellensatz.

\begin{theorem}\label{th::sparse}
    Suppose that $\Y$ is compact, Assumption \ref{assump_CS} holds for $f$ and $\widehat{\X}$, 
    and Assumption \ref{assump2} holds for each $\QM^1(\bG^{\ell}, \nu)$, $\ell\in[t]$. 
    If $f>0$ on $\widehat{\X}$, then $f\in\sum_{\ell=1}^t\QM^1(\bG^{\ell}, \nu)$.
\end{theorem}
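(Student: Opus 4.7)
The plan is to proceed by induction on the number $t$ of cliques in the correlative sparsity decomposition, adapting the strategy of Grimm, Netzer, and Schweighofer \cite{GNS2007} for the classical sparse Putinar Positivstellensatz to the PMI/UQ setting, with Theorem \ref{th::SHpsatz} serving as the single-clique atomic result. The base case $t=1$ is immediate: $f\in\RR[\bx(\mI_1)]$, and since compactness of $\Y$ yields Assumption \ref{assump::carleman} automatically, Theorem \ref{th::SHpsatz} applied with $p=1$ delivers $f\in\QM^1(\bG^1,\nu)$.

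For the inductive step with $t\geq 2$, I would use the running intersection property (Assumption \ref{assump_CS}(i)) to reorder the cliques so that the last one satisfies $S\coloneqq\mI_t\cap\bigcup_{\ell<t}\mI_\ell\subseteq\mI_k$ for some $k<t$. Set $U\coloneqq\bigcup_{\ell<t}\mI_\ell$, $V\coloneqq\mI_t$, decompose $f=g+f_t$ with $g\coloneqq\sum_{\ell<t}f_\ell\in\RR[\bx(U)]$, and introduce the clique-wise feasible sets
\[
K_V\coloneqq\{\bx(V)\mid G_j(\bx(V),\by)\succeq 0,\ \forall\by\in\Y,\ j\in\mJ_t\},\quad K_U\coloneqq\{\bx(U)\mid G_j(\bx(U),\by)\succeq 0,\ \forall\by\in\Y,\ j\in\bigcup_{\ell<t}\mJ_\ell\}.
\]
By Proposition \ref{prop::seteq} combined with the Archimedean assumption on each $\QM^1(\bG^\ell,\nu)$, both $K_U$ and $K_V$ are compact. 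Under Assumption \ref{assump_CS}(ii), $\widehat\X$ corresponds to the gluing of $K_U$ and $K_V$ along the shared coordinates $\bx(S)$, so $f>0$ on $\widehat\X$ becomes $g(\bx(U))+f_t(\bx(V))>0$ whenever $\bx(U)\in K_U$, $\bx(V)\in K_V$, and $\bx(U)|_S=\bx(V)|_S$.

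The technical heart is a splitting lemma producing a polynomial $h\in\RR[\bx(S)]$ such that $f_t-h>0$ on $K_V$ and $g+h>0$ on $K_U$. Following \cite{GNS2007}, compactness of $K_U,K_V$ together with the glued-set positivity yields a continuous function $\tilde h$ on a compact set containing both projections of $K_U$ and $K_V$ onto $\bx(S)$, lying strictly between the envelopes $\sup\{-g(\bx(U)):\bx(U)|_S=\bx(S)\}$ and $\inf\{f_t(\bx(V)):\bx(V)|_S=\bx(S)\}$; a uniform positive gap on this compact domain permits Weierstrass approximation of $\tilde h$ by a polynomial $h\in\RR[\bx(S)]$ preserving both strict inequalities. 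With $h$ in hand, Theorem \ref{th::SHpsatz} applied to $f_t-h$ on $K_V$ gives $f_t-h\in\QM^1(\bG^t,\nu)$. Since $S\subseteq\mI_k$, we have $h\in\RR[\bx(\mI_k)]$, so $h$ is absorbed into the summand $f_k$ and the correlative sparsity decomposition of $g+h$ across the first $t-1$ cliques is preserved; the inductive hypothesis then yields $g+h\in\sum_{\ell<t}\QM^1(\bG^\ell,\nu)$, and summing completes the step.

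The main obstacle is executing the splitting lemma rigorously for PMI-with-UQ constraints. Conceptually the GNS argument is clean once compactness and a uniform strict gap are in place, but one must verify (i) that Proposition \ref{prop::seteq} correctly identifies $K_U$ and $K_V$ with the sets cut out by the corresponding quadratic modules (so the matrix Archimedean hypothesis delivers compactness), (ii) that the projections onto $\bx(S)$ remain closed (hence compact), and (iii) that the envelopes above are upper/lower semicontinuous so that the Tietze extension of a strictly interior continuous function is justified. The UQ structure enters only through the $\Y$-integration inside the quadratic module rather than through the underlying feasible geometry, so these verifications should parallel the scalar GNS argument with only notational adjustments, while the PMI structure is handled by invoking Theorem \ref{th::SHpsatz} in place of the classical Putinar Positivstellensatz.
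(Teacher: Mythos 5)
Your argument is correct in outline, but it follows a genuinely different route from the paper's. The paper does \emph{not} run the Grimm--Netzer--Schweighofer gluing induction on the semialgebraic sets themselves; instead it first invokes Proposition \ref{prop::main} to subtract the explicit multiplier terms $\int_{\Y}\langle(I_{q_j}-G_j/M)^{2k'},G_j\rangle\ud\nu(\by)$ (each of which already lives in some $\QM^1(\bG^{\ell},\nu)$ and respects the sparsity pattern), thereby reducing to a polynomial that is positive on the whole cube $[-\sqrt{C},\sqrt{C}]^n$; it then applies the cube-splitting lemma (Proposition \ref{prop::decompf}) and finishes with scalar Putinar on each $h_\ell$ using only the single constraint $C-\Vert\bx(\mI_\ell)\Vert^2$, which the Archimedean hypothesis places inside $\QM^1(\bG^{\ell},\nu)$. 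Your plan instead inducts on $t$ and performs the splitting directly on the feasible sets $K_U,K_V$, which forces you to work with the fiberwise envelopes $M(\bz)=\max\{-g\}$ and $m(\bz)=\min\{f_t\}$; these are only upper/lower semicontinuous (not continuous, since the fibers of $\pi_S$ over a general compact semialgebraic set vary discontinuously), so the correct tool is a Kat\v{e}tov--Tong/Hahn insertion theorem together with a uniform-gap argument and Stone--Weierstrass, not Tietze extension, and you must also extend $M$ (resp.\ $m$) by a sufficiently negative (resp.\ positive) constant off $\pi_S(K_U)$ (resp.\ $\pi_S(K_V)$) to handle the non-overlapping parts of the projections. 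All of this can be carried out, and your reductions are otherwise sound: the Archimedean hypothesis does force compactness of $K_U,K_V$ (the easy inclusion of Proposition \ref{prop::seteq} suffices), any pair $(\bx(U),\bx(V))$ agreeing on $S$ extends to a point of $\widehat{\X}$ because of Assumption \ref{assump_CS}(ii), and the multi-constraint version of Theorem \ref{th::SHpsatz} needed per clique follows from block-diagonalization via Lemmas \ref{prop::block} and \ref{lem::int} (as in Corollary \ref{cor::pos}). What the paper's route buys is the complete avoidance of these topological insertion arguments (the cube has constant fibers, so the splitting there is elementary) and explicit multipliers; what your route buys is a self-contained induction that never needs Proposition \ref{prop::main} and works clique-by-clique directly on the true feasible geometry.
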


To prove Theorem \ref{th::sparse}, we need the following intermediary results.
\begin{proposition}\label{prop::decompf}\cite[Lemma 3]{GNS2007}
   Let $\{\mI_\ell\}_{\ell\in[t]}$ satisfy the running intersection property. For any $C>0$, if $f=f_1+\cdots+f_t$ with $f_\ell\in\RR[\bx(\mI_\ell)]$ satisfies $f>0$ on $[-C, C]^n$, then
   $f=h_1+\ldots+h_t$ for some $h_\ell\in\RR[\bx(\mI_\ell)]$ with $h_\ell>0$ on $[-C, C]^{|\mI_\ell|}$.
\end{proposition}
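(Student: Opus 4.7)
The plan is to proceed by induction on $t$, using the running intersection property to peel off one index set at a time. The base case $t=1$ is immediate by taking $h_1=f$. For the inductive step, after relabeling we may assume $\mI_t$ is a leaf in the tree structure implicit in the RIP, so there is some $k<t$ with $\mI_t\cap\bigcup_{j<t}\mI_j\subseteq\mI_k$. Put $\mA:=\mI_t\cap\mI_k$ and $\mathcal{B}:=\mI_t\setminus\mI_k$, so the variables $\bx(\mathcal{B})$ are private to $\mI_t$ (they appear in no other $\mI_\ell$). Since $f$ is continuous and strictly positive on the compact box $[-C,C]^n$, there is some $\delta>0$ with $f\ge\delta$ throughout.

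The heart of the argument is to define the continuous function
\[
\psi(\bx(\mA))\;:=\;\min_{\bx(\mathcal{B})\in[-C,C]^{|\mathcal{B}|}} f_t(\bx(\mI_t))\;-\;\delta/2
\]
on $[-C,C]^{|\mA|}$ and, via the Weierstrass approximation theorem, produce a polynomial $q(\bx(\mA))\in\RR[\bx(\mA)]$ with $|\psi-q|<\delta/4$ on that box. The redistribution is then $\tilde f_t:=f_t-q\in\RR[\bx(\mI_t)]$ and $\tilde f_k:=f_k+q\in\RR[\bx(\mI_k)]$ (valid since $\mA\subseteq\mI_k$), leaving all other $f_\ell$ unchanged; this preserves $\sum_\ell\tilde f_\ell=f$. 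A direct estimate, using $q<\psi+\delta/4$ together with $f_t\ge\psi+\delta/2$ on $[-C,C]^{|\mI_t|}$, gives $\tilde f_t\ge\delta/4>0$ on that box, so this polynomial will serve as $h_t$.

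It then remains to verify that $f-\tilde f_t=\sum_{\ell<t}\tilde f_\ell$ is strictly positive on $[-C,C]^{|\bigcup_{j<t}\mI_j|}$, so that the inductive hypothesis applies to the reduced family $\{\mI_\ell\}_{\ell<t}$ (which still satisfies the RIP since its defining condition only refers to smaller indices). Fix any point $\bx(\bigcup_{j<t}\mI_j)$ in the smaller box and extend it by choosing $\bx(\mathcal{B})$ to attain the minimum defining $\psi$ at the corresponding $\bx(\mA)$; at the resulting $\bx\in[-C,C]^n$ one has $f(\bx)\ge\delta$ while, using $q>\psi-\delta/4$, $\tilde f_t(\bx(\mI_t))=f_t-q\le(\psi+\delta/2)-(\psi-\delta/4)=3\delta/4$. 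Hence $(f-\tilde f_t)(\bx)\ge\delta/4$, and since $f-\tilde f_t$ is free of the $\mathcal{B}$-variables this positivity descends to the projected box. The inductive hypothesis then yields $\{h_\ell\}_{\ell<t}$ with $\sum_{\ell<t}h_\ell=f-\tilde f_t$ and $h_\ell>0$ on $[-C,C]^{|\mI_\ell|}$, and adjoining $h_t:=\tilde f_t$ completes the decomposition.

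The main obstacle is pure bookkeeping: the $\delta/2$ and $\delta/4$ buffers must be chosen so that the approximation error for $\psi$ leaves strict room on both sides, keeping $\tilde f_t$ positive on its own box while also keeping $f-\tilde f_t$ positive after the leaf is stripped away. The conceptual insight is that the private variables of a leaf can be absorbed, via an infimum, into a continuous function of the shared variables $\bx(\mA)$; Weierstrass then transports this into a polynomial living in $\RR[\bx(\mA)]\subseteq\RR[\bx(\mI_k)]$, which is exactly what makes the redistribution compatible with the variable-indexing structure required by the conclusion.
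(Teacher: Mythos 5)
Your argument is correct and is essentially the proof of the cited result \cite[Lemma 3]{GNS2007}, which the paper invokes without reproving: induct on $t$, use the running intersection property to isolate the private variables of the last block, minimize them out to get a continuous function of the shared variables, approximate by a polynomial via Stone--Weierstrass, and shift that polynomial into block $k$. The $\delta/2$ versus $\delta/4$ bookkeeping checks out on both sides (positivity of $\tilde f_t$ on its box and of $f-\tilde f_t$ on the projected box), so there is nothing to add.
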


\begin{proposition}\label{prop::main} Suppose $F(\bx)\in\bS[\bx]^{p}$ is PD on $\widehat{\X}$. 
Then for any $C>0$, there exists $M>0$ and $\bar{k}\in\N$ such that 
\begin{equation}\label{eq::ge}
   F(\bx)-\sum_{j=1}^s\int_{\Y}\lram{I_p\otimes \left(I_{q_j}-G_j(\bx,
   \by)/M\right)^{2k}}{G_j(\bx, \by)}{p}\ud\nu(\by)\succ 0 
    \end{equation}
   on $[-C, C]^n$ for all $k\ge\bar{k}$. 
\end{proposition}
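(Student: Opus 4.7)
The plan is first to reduce the matrix-valued inequality to a scalar one by evaluating the pairing $\lram{\cdot}{\cdot}{p}$ explicitly, and then to combine a uniform upper bound on the correction term with a compactness-and-contradiction argument on $[-C,C]^n$. Since $I_p\otimes (I_{q_j}-G_j/M)^{2k}$ is block diagonal with $p$ identical $q_j\times q_j$ blocks and $G_j$ is symmetric, the definition of $\lram{\cdot}{\cdot}{p}$ gives
\[
\lram{I_p\otimes (I_{q_j}-G_j/M)^{2k}}{G_j}{p}=\tr{(I_{q_j}-G_j/M)^{2k}G_j}\,I_p.
\]
Setting $\phi_{j,k}(\bx,\by)\coloneqq \tr{(I_{q_j}-G_j(\bx,\by)/M)^{2k}G_j(\bx,\by)}$ and $\alpha_k(\bx)\coloneqq\sum_{j=1}^s\int_{\Y}\phi_{j,k}(\bx,\by)\ud\nu(\by)$, the target inequality \eqref{eq::ge} reduces to the scalar-on-diagonal statement $F(\bx)\succ\alpha_k(\bx)I_p$ on $[-C,C]^n$ for every $k\ge\bar k$.

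For the uniform upper bound, I would fix $M$ strictly larger than $\sup\{\Vert G_j(\bx,\by)\Vert : j\in[s],\,\bx\in[-C,C]^n,\,\by\in\Y\}$, which is finite by continuity and the compactness of $[-C,C]^n\times\Y$ (using compactness of $\Y$). Diagonalizing $G_j(\bx,\by)$ yields $\phi_{j,k}(\bx,\by)=\sum_i\lambda_i(1-\lambda_i/M)^{2k}$ with $\lambda_i\in(-M,M)$. A one-variable calculus check shows $\lambda(1-\lambda/M)^{2k}$ attains its maximum on $[0,M]$ at $\lambda=M/(2k+1)$ with value bounded by $M/(2k+1)$, while negative eigenvalues contribute nonpositively. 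Hence $\phi_{j,k}(\bx,\by)\le q_jM/(2k+1)$ pointwise, and consequently $\alpha_k(\bx)\le\epsilon_k$ for some $\epsilon_k\to 0$ that is \emph{independent} of $\bx\in[-C,C]^n$.

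Finally, assume toward contradiction that no such $\bar k$ exists: there are $k_n\to\infty$ and $\bx_{k_n}\in[-C,C]^n$ with $F(\bx_{k_n})-\alpha_{k_n}(\bx_{k_n})I_p\not\succ 0$. Pass to a convergent subsequence $\bx_{k_n}\to\bx^{*}$. If $\bx^{*}\in\widehat{\X}$, then $F(\bx^{*})\succ 0$ gives $F(\bx^{*})\succeq cI_p$ for some $c>0$; continuity of $F$ and $\alpha_{k_n}(\bx_{k_n})\le\epsilon_{k_n}\to 0$ yield $F(\bx_{k_n})-\alpha_{k_n}(\bx_{k_n})I_p\succeq (c/4)I_p\succ 0$ for large $n$, a contradiction. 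If $\bx^{*}\notin\widehat{\X}$, pick $j_0$ and $\by_0\in\Y$ with $G_{j_0}(\bx^{*},\by_0)$ having an eigenvalue $-\eta<0$; continuity of spectra provides a neighborhood $V\times W$ of $(\bx^{*},\by_0)$ in $\RR^n\times\Y$ on which $G_{j_0}$ retains an eigenvalue $\le-\eta/2$. Since $\supp(\nu)=\Y$, we have $\nu(W)>0$, and for $\bx\in V$ that eigenvalue contributes at most $-(\eta/2)(1+\eta/(2M))^{2k}$ to $\phi_{j_0,k}$ pointwise on $W$. Combined with the uniform bound on the remaining eigenvalues and the other $G_j$'s over $\Y$, this drives $\alpha_{k_n}(\bx_{k_n})\to-\infty$ once $\bx_{k_n}\in V$; as $F$ is bounded on $[-C,C]^n$, this forces $F(\bx_{k_n})-\alpha_{k_n}(\bx_{k_n})I_p\succ 0$, again a contradiction. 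The main obstacle is this second case: one must locate a neighborhood $W\subset\Y$ with $\nu(W)>0$ on which some negative eigenvalue of $G_{j_0}$ persists bounded away from zero (via a Weyl-type perturbation estimate and $\supp(\nu)=\Y$), and then verify that its exponential blow-up $(1+\eta/(2M))^{2k}$ dominates both the $O(1/k)$ positive contributions from the other eigenvalues and the bounded contributions from the $\Y\setminus W$ portion of the integral.
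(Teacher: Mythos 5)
Your proposal is correct and follows essentially the same route as the paper's proof: the same choice of $M$ via compactness of $[-C,C]^n\times\Y$, the reduction $\lram{I_p\otimes(I_{q_j}-G_j/M)^{2k}}{G_j}{p}=\tr{(I_{q_j}-G_j/M)^{2k}G_j}I_p$, the eigenvalue estimate giving a uniform $O(1/k)$ upper bound on the positive contributions, and the same two-case compactness-and-contradiction argument using $\supp(\nu)=\Y$ to force the exponential blow-up of a persistent negative eigenvalue when $\bx^\star\notin\widehat{\X}$. The explicit maximizer $\lambda=M/(2k+1)$ is a slightly sharper form of the paper's bound $\max_{\xi\in[0,1]}\xi(1-\xi)^{2k}\to 0$, but the argument is the same.
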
 
The proof of Proposition \ref{prop::main} is postponed to Section \ref{sec::prfPropmain}.
Note that using Proposition \ref{prop::main}, we can provide a constructive proof of Theorem \ref{th::SHpsatz} assuming that $\Y$ is compact (see Section \ref{sec::prfPropmain}).

\begin{proof}[Proof of Theorem \ref{th::sparse}]
By assumption, let $C>0$ be such that $C - \Vert \bx(\mI_{\ell})\Vert^2\in\QM^1(\bG^{\ell}, \nu)$ 
holds for all $\ell\in[t]$. By Proposition \ref{prop::main}, there exists $M>0$ and $k'\in\N$ such that 
\[
   f(\bx)-\sum_{j=1}^s\int_{\Y}\left\langle\left(I_{q_j}-G_j(\bx, \by)/M\right)^{2k'}, G_j(\bx, \by)\right\rangle\ud\nu(\by)>0  
\]
   on $[-\sqrt{C}, \sqrt{C}]^n$. Note that for each $j\in[s]$, 
\[
\int_{\Y}\left\langle\left(I_{q_j}-G_j(\bx, \by)/M\right)^{2k'}, G_j(\bx, \by)\right\rangle\ud\nu(\by)\in
\RR[\bx(\mI_{\ell})]
\]
for some $\ell\in[t]$. By Proposition \ref{prop::decompf}, there exist
$h_\ell\in\RR[\bx(\mI_\ell)]$, $\ell\in[t]$,
such that
\[
f(\bx)-\sum_{j=1}^s\int_{\Y}\left\langle\left(I_{q_j}-G_j(\bx, \by)/M\right)^{2k'}, G_j(\bx, \by)\right\rangle\ud\nu(\by)=h_1+\cdots+h_t,
\]
and each $h_\ell>0$ on $[-\sqrt{C}, \sqrt{C}]^{|\mI_\ell|}$.
By Putinar's Positivstellensatz \cite{Putinar1993}, for each $h_{\ell}$, there exist SOS polynomials 
$\sigma_{\ell,0}, \sigma_{\ell,1}\in\RR[\bx(\mI_{\ell})]$ such that 
\[
h_{\ell}=\sigma_{\ell, 0}+\sigma_{\ell, 1}(C-\Vert\bx(\mI_{\ell})\Vert^2).
\]
As each $C - \Vert \bx(\mI_{\ell})\Vert^2\in\QM^1(\bG^{\ell}, \nu)$, it holds 
$h_{\ell}\in\QM^1(\bG^{\ell}, \nu)$. Therefore,
\[
f(\bx)=\sum_{\ell\in[t]}\left(\sum_{j\in\mJ_{\ell}}\int_{\Y}\left\langle\left(I_{q_j}-G_j(\bx, \by)/M\right)^{2k'}, G_j(\bx, \by)\right\rangle\ud\nu(\by)+h_{\ell}\right)
\in\sum_{\ell=1}^t\QM^1(\bG^{\ell}, \nu).
\]
\end{proof}

\begin{remark}
One might wonder whether the result of Theorem \ref{th::sparse} holds for a polynomial matrix $F(\bx)\in\bS[\bx]^p$
with $p>1$. 
Indeed, this is not true even in the absence of UQs; see \cite{MWG2024} for a counterexample.
\end{remark}

\section{Positivstellens\"atze for polynomial matrices with UQs and without the Archimedean condition}\label{sec::nonarchi}
The results in Section \ref{sec::archi} are derived under the Archimedean condition on the quadratic 
module generated by $(G, \nu)$, which requires $\X$ to be bounded. 
Many efforts have been made to provide SOS-structured 
representations for polynomials that are PD on
a semialgebraic set without the Archimedean condition. 
The goal of this section is to extend some of these well-known results to 
the setting of polynomial matrices with UQs. 

\subsection{A Positivstellensatz for the non-compact case}
Let $\theta\coloneqq1+\Vert\bx\Vert^2$.
For every nonnegative polynomial $f\in\RR[\bx]$ on a general basic semialgebraic set, 
Putinar and Vasilescu \cite{PV1999} proved that for a given $\varepsilon>0$ and $2d\ge \deg f$, 
there exists a nonnegative integer $k$ such that $\theta^k(f+\varepsilon \theta^d)$ belongs to the quadratic module associated with the basic semialgebraic set. 
Using Jacobi's technique \cite{Jacobi2001}, Mai et al. \cite{MLM2022}
provided an alternative proof of Putinar and Vasilescu's
result with an effective degree bound on polynomials involved in such certificates.

Building on Theorem \ref{th::SHpsatz} and similar techniques from \cite{MLM2022}, 
we next derive matrix-valued Positivstellens{\"a}tz for the non-compact case, 
incorporating universal quantifiers. In the homogeneous case, the result is stated as follows.
See Section \ref{sec::prfThmhom} for its proof.
\begin{theorem}\label{th::homo}
Suppose that Assumption \ref{assump::carleman} holds,
$F\in\bS[\bx]^p$ and $G\in\bS[\bx, \by]^{q}$ are homogeneous in $\bx$ of even degree, and 
$F(\bx)\succ 0$ for all $\bx\in\X\setminus\{\mathbf{0}\}$. Then, there exists $N\in\N$ and 
SOS matrices $S_0\in\bS[\bx]^p$, $S\in\bS[\bx,\by]^{pq}$ that 
are homogeneous in $\bx$ and $\deg S_0=2N+\deg F$, $\deg_{\bx}S=2N+\deg F-\deg_{\bx}G$, 
such that 
\[
\Vert\bx\Vert^{2N}F(\bx)=S_0(\bx)+\int_{\Y}\lram{S(\bx,\by)}{G(\bx, \by)}{p}
 \ud\nu(\by).
 \]
\end{theorem}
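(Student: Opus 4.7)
The plan is to reduce to the Archimedean setting by restricting to the unit sphere, invoke Corollary~\ref{cor::pos}, and then convert the resulting affine identity into a homogeneous one via an auxiliary-variable/parity argument in the spirit of Reznick and Putinar--Vasilescu. Since $F$ is homogeneous in $\bx$ and PD on $\X\setminus\{\mathbf 0\}$, it is PD on the compact set $\X\cap\{\bx:\Vert\bx\Vert^2=1\}$. I would adjoin the two scalar constraints $H_1=1-\Vert\bx\Vert^2$ and $H_2=\Vert\bx\Vert^2-1$, which together encode $\Vert\bx\Vert^2=1$. Since $1-\Vert\bx\Vert^2\in\QM^1(\{H_1,H_2\})$, the combined quadratic module $\QM^1(G,\nu)+\QM^1(\{H_1,H_2\})$ is trivially Archimedean, so Corollary~\ref{cor::pos} (leveraging the Carleman hypothesis on $\nu$) yields a polynomial identity
\[
F(\bx) = \widehat\Sigma_0(\bx) + \int_\Y\lram{\widehat\Sigma(\bx,\by)}{G(\bx,\by)}{p}\ud\nu(\by) + P(\bx)(\Vert\bx\Vert^2-1),
\]
for some SOS matrices $\widehat\Sigma_0\in\bS[\bx]^p$, $\widehat\Sigma\in\bS[\bx,\by]^{pq}$ and some symmetric (not necessarily SOS) $P\in\bS[\bx]^p$ obtained by combining the $H_1$ and $H_2$ coefficients.

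Next I would pick an even integer $D$ exceeding $\max\{\deg F,\deg\widehat\Sigma_0,\deg_\bx\widehat\Sigma+\deg_\bx G,\deg P+2\}$ and homogenize every term in $\bx$ via an auxiliary variable $x_0$ to common total degree $D$ in $(x_0,\bx)$: each polynomial matrix $h(\bx)$ with $\deg h\le D$ is replaced by $x_0^D h(\bx/x_0)$, an operation that preserves SOS when $D$ is even (by a Gram-decomposition check). Since $F$ and $G$ are already homogeneous in $\bx$, this produces the identity
\[
x_0^{D-\deg F}F(\bx) = \widetilde\Sigma_0(x_0,\bx) + \int_\Y\lram{\widetilde\Sigma(x_0,\bx,\by)}{G(\bx,\by)}{p}\ud\nu(\by) + \widetilde P(x_0,\bx)(\Vert\bx\Vert^2-x_0^2),
\]
homogeneous of degree $D$ in $(x_0,\bx)$, with $\widetilde\Sigma_0$ and $\widetilde\Sigma$ SOS. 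Averaging this identity with its $x_0\mapsto-x_0$ version leaves the LHS and the factor $\Vert\bx\Vert^2-x_0^2$ unchanged (since $D-\deg F$ is even) but replaces the three coefficient matrices by their even-in-$x_0$ parts; the averaged $\widetilde\Sigma_0$ and $\widetilde\Sigma$ remain SOS as half-sums of two SOS matrices. Substituting $x_0^2=\Vert\bx\Vert^2$ then annihilates the ideal term, and setting $2N:=D-\deg F$ gives the target relation
\[
\Vert\bx\Vert^{2N}F(\bx) = \Sigma_0^\star(\bx) + \int_\Y\lram{\Sigma^\star(\bx,\by)}{G(\bx,\by)}{p}\ud\nu(\by).
\]

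The main obstacle is verifying that the substitution $x_0^2=\Vert\bx\Vert^2$ sends an SOS matrix in $(x_0,\bx,\by)$ that is even in $x_0$ to an SOS matrix in $(\bx,\by)$. I would decompose each SOS kernel $q(x_0,\bx,\by)^\intercal q(x_0,\bx,\by)$ by writing $q=q^e+q^o$ where $q^e$ contains only even powers of $x_0$ and $q^o=x_0\,r(x_0,\bx,\by)$ with $r$ also even in $x_0$. The even-in-$x_0$ part of $q^\intercal q$ equals $q^{e\intercal}q^e+x_0^2 r^\intercal r$, which under $x_0^2=\Vert\bx\Vert^2$ becomes
\[
q^e(\Vert\bx\Vert,\bx,\by)^\intercal q^e(\Vert\bx\Vert,\bx,\by)+\sum_{i=1}^n \bigl(x_i\,r(\Vert\bx\Vert,\bx,\by)\bigr)^\intercal \bigl(x_i\,r(\Vert\bx\Vert,\bx,\by)\bigr),
\]
manifestly SOS in $(\bx,\by)$ (both kernels are polynomial because only even powers of $x_0$ appear in $q^e$ and $r$). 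Summing over kernels confirms that $\Sigma_0^\star$ and $\Sigma^\star$ are SOS matrices, homogeneous in $\bx$ of degrees $D=2N+\deg F$ and $D-\deg_\bx G=2N+\deg F-\deg_\bx G$ respectively, as required.
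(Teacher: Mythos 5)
Your argument is correct and takes essentially the same route as the paper's proof: both start from Corollary~\ref{cor::pos} applied to $\X$ intersected with the unit sphere (so that the ideal generated by $\Vert\bx\Vert^2-1$ appears), and both then eliminate the ideal term by a parity argument in a formal square root of $\Vert\bx\Vert^2$, checking carefully that the even-degree part of the SOS certificates remains SOS after the quotient. The only difference is the bookkeeping device: you adjoin an explicit homogenizing variable $x_0$ and later substitute $x_0^2=\Vert\bx\Vert^2$, while the paper works directly with expressions in $\Vert\bx\Vert$ (replacing $\bx\mapsto\bx/\Vert\bx\Vert$, clearing denominators, and concluding the $\Vert\bx\Vert$-linear terms must vanish because the left side is polynomial) -- the underlying algebra, including the even/odd decomposition $H=H_1+H_2\Vert\bx\Vert$ versus your $q=q^e+x_0 r$, is the same.
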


For any $H=[H_{ij}]\in\bS[\bx,\by]^r$, 
let $d_H\coloneqq\max\left\{\lfloor\deg_{\bx}(H_{ij})/2\rfloor+1:i, j\in[r]\right\}$
and $\td{\bx}\coloneqq(\bx, x_{n+1})$. 
By applying Theorem \ref{th::homo}, we obtain its inhomogeneous counterpart, as given below.
%
\begin{corollary}\label{cor::inhomo}
Suppose that Assumption \ref{assump::carleman} holds and
$F(\bx)\in\bS[\bx]^p$ is PSD on $\X$.   
Then for any $\varepsilon>0$, there exists $N_{\varepsilon}\in\N$,
SOS matrices $S_0\in\bS[\bx]^p$, and $S\in\bS[\bx,\by]^{pq}$ with $\deg S_0\le 2(N_{\varepsilon}+d_F)$ and $\deg_{\bx}S\le 2(N_{\varepsilon}+d_F-d_G)$, such that 
\[
\theta^{N_{\varepsilon}}(F(\bx)+\varepsilon\theta^{d_F}I_p)
=S_0(\bx)+\int_{\Y}\lram{S(\bx,\by)}{G(\bx, \by)}{p} \ud\nu(\by).
\]
\end{corollary}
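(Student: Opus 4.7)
The plan is to homogenize in the $\bx$-variables by introducing an auxiliary $x_{n+1}$, apply Theorem \ref{th::homo} to a suitable $\varepsilon$-perturbation, and then specialize at $x_{n+1}=1$. Concretely, set $\td{\bx}\coloneqq(\bx,x_{n+1})$ and define
\[
\td{F}(\td{\bx})\coloneqq x_{n+1}^{2d_F}F(\bx/x_{n+1}),\qquad \td{G}(\td{\bx},\by)\coloneqq x_{n+1}^{2d_G}G(\bx/x_{n+1},\by),
\]
which, by the definitions of $d_F,d_G$, are polynomial matrices homogeneous in $\td{\bx}$ of even degrees $2d_F$ and $2d_G$. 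Let $\td{F}_\varepsilon(\td{\bx})\coloneqq\td{F}(\td{\bx})+\varepsilon\Vert\td{\bx}\Vert^{2d_F}I_p$ and $\td{\X}\coloneqq\{\td{\bx}\in\RR^{n+1}\mid \td{G}(\td{\bx},\by)\succeq 0,\ \forall \by\in\Y\}$.

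The main step is to show $\td{F}_\varepsilon\succ 0$ on $\td{\X}\setminus\{\mathbf{0}\}$. If $x_{n+1}\neq 0$, then $x_{n+1}^{2d_G}>0$ gives $G(\bx/x_{n+1},\by)\succeq 0$ for every $\by\in\Y$, hence $\bx/x_{n+1}\in\X$ and $F(\bx/x_{n+1})\succeq 0$ by hypothesis; consequently $\td{F}(\td{\bx})\succeq 0$, and adding the strictly PD perturbation gives $\td{F}_\varepsilon(\td{\bx})\succ 0$. If $x_{n+1}=0$ and $\bx\neq\mathbf{0}$, the crucial observation is that the choice $d_F=\max_{i,j}\{\lfloor\deg_{\bx} F_{ij}/2\rfloor+1\}$ forces the strict inequality $2d_F>\deg_{\bx}F$, so every monomial of $\td{F}$ carries a positive power of $x_{n+1}$; whence $\td{F}(\bx,0)=\mathbf{0}$ and $\td{F}_\varepsilon(\bx,0)=\varepsilon\Vert\bx\Vert^{2d_F}I_p\succ 0$. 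This boundary case at $x_{n+1}=0$ is the subtle part of the argument, and it is precisely what motivates the $+1$ in the definition of $d_F$ and the particular shape of the perturbation.

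Theorem \ref{th::homo}, applied to $\td{F}_\varepsilon$ and $\td{G}$ in the $n+1$ variables $\td{\bx}$ (Assumption \ref{assump::carleman} on $\nu$ is unchanged), yields $N_\varepsilon\in\N$ and SOS matrices $\td{\Sigma}_0\in\bS[\td{\bx}]^p$, $\td{\Sigma}\in\bS[\td{\bx},\by]^{pq}$, homogeneous in $\td{\bx}$ with $\deg\td{\Sigma}_0=2N_\varepsilon+2d_F$ and $\deg_{\td{\bx}}\td{\Sigma}=2N_\varepsilon+2d_F-2d_G$, satisfying
\[
\Vert\td{\bx}\Vert^{2N_\varepsilon}\td{F}_\varepsilon(\td{\bx})=\td{\Sigma}_0(\td{\bx})+\int_{\Y}\lram{\td{\Sigma}(\td{\bx},\by)}{\td{G}(\td{\bx},\by)}{p}\ud\nu(\by).
\]
Dehomogenize by substituting $x_{n+1}=1$: since $\Vert(\bx,1)\Vert^2=\theta$, $\td{F}_\varepsilon(\bx,1)=F(\bx)+\varepsilon\theta^{d_F}I_p$, $\td{G}(\bx,1,\by)=G(\bx,\by)$, and specialization of an SOS matrix at a fixed value of $x_{n+1}$ remains SOS, the matrices $\Sigma_0(\bx)\coloneqq\td{\Sigma}_0(\bx,1)$ and $\Sigma(\bx,\by)\coloneqq\td{\Sigma}(\bx,1,\by)$ provide the claimed decomposition, and the degree bounds $\deg\Sigma_0\le 2(N_\varepsilon+d_F)$ and $\deg_{\bx}\Sigma\le 2(N_\varepsilon+d_F-d_G)$ follow immediately.
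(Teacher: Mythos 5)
Your proof is correct and follows essentially the same route as the paper's: homogenize via $x_{n+1}$, verify positive definiteness of the perturbed homogenization on $\wt{\X}\setminus\{\mathbf{0}\}$ by the same two-case analysis (with the $x_{n+1}=0$ case resolved by the fact that $2d_F>\deg F$ forces $x_{n+1}$ to divide $\wt{F}$), apply Theorem \ref{th::homo}, and dehomogenize at $x_{n+1}=1$. No gaps.
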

\begin{proof}
Let $\wt{F}\coloneqq[x_{n+1}^{2d_F}F_{ij}(\bx/x_{n+1})]_{i,j\in[p]}\in\bS[\td{\bx}]^p$,
$\wt{G}\coloneqq[x_{n+1}^{2d_G}G_{ij}(\bx/x_{n+1}, \by)]_{i,j\in[q]}\in\bS[\td{\bx},\by]^{q}$,
and consider
\[
\wt{\X}\coloneqq\left\{\td{\bx}\in\RR^{n+1} \middle| \wt{G}(\td{\bx}, \by)\succeq 0, \ \forall \by\in \Y\right\}.
\]
We first prove that $\wt{F}+\varepsilon\Vert\td{\bx}\Vert^{2d_F}I_p\succ 0$ 
on $\wt{\X}\setminus\{\mathbf{0}\}$. Fix a point $\td{\bu}=(\bu, u_{n+1})\in\wt{\X}\setminus\{\mathbf{0}\}$.
\begin{enumerate}[-]
    \item Case 1: $u_{n+1}\neq 0$. As $\wt{G}(\td{\bu},\by)=u_{n+1}^{2d_{G}}G(\bu/u_{n+1}, \by)\succeq 0$, 
    we have $G(\bu/u_{n+1}, \by)\succeq 0$ for all $\by\in\Y$, which implies $\bu/u_{n+1}\in\X$.
    Hence,  $\wt{F}(\td{\bu})=u_{n+1}^{2d_{F}}F(\bu/x_{n+1})\succeq 0$. Since 
    $\Vert\td{\bu}\Vert^2\neq 0$,
    $\wt{F}(\td{\bu})+\varepsilon\Vert\td{\bu}\Vert^{2d_F}I_p\succ 0$.
    \item Case 2: $u_{n+1}= 0$. By the definition of $d_F$, $x_{n+1}$ divides $\wt{F}(\td{\bx})$.
    Thus, $\wt{F}(\td{\bu})=0$. Since $\Vert\td{\bu}\Vert^2\neq 0$, 
    $\wt{F}(\td{\bu})+\varepsilon\Vert\td{\bu}\Vert^{2d_F}I_p \succ 0$.
\end{enumerate}

Applying Theorem \ref{th::homo} to the polynomial matrices $\td{F}+\varepsilon\Vert\td{\bx}\Vert^{2d_F}I_p$
and $\wt{G}$, we obtain $N_{\varepsilon}\in\N$ and SOS matrices $\wt{S}_0\in\bS[\bx]^p$, $\wt{S}\in\bS[\bx,\by]^{pq}$, which 
are homogeneous in $\bx$ and $\deg\wt{S}_0=2N_{\varepsilon}+2d_F$, 
$\deg_{\bx}\wt{S}=2N_{\varepsilon}+2d_F-2d_G$, such that 
\begin{equation}\label{eq::homo}
\Vert\td{\bx}\Vert^{2N_{\varepsilon}}(\wt{F}(\td{\bx})+\varepsilon\Vert\td{\bx}\Vert^{2d_F}I_p)=\wt{S}_0(\td{\bx})+\int_{\Y}\lram{\wt{S}(\td{\bx},\by)}{\wt{G}(\td{\bx}, \by)}{p} \ud\nu(\by).
 \end{equation}
Then, letting $x_{n+1}=1$ in \eqref{eq::homo}, we achieve the desired conclusion.
\end{proof}

\subsection{A Positivstellensatz on the nonnegative orthant}

P{\'o}lya \cite{polya} proved that multiplying a positive homogeneous polynomial on the nonnegative 
orthant by some power of $\sum_{i=1}^n x_i$ yields a polynomial with nonnegative coefficients.
Powers and Reznick \cite{PR2001} gave a ''positive'' version of P\'olya's theorem.
Dickinson and Povh \cite{DP2015} extended the result of P{\'o}lya \cite{polya} to provide a certificate for
positive homogeneous polynomials on the intersection of the nonnegative orthant with a basic semialgebraic set.

We next employ Theorem \ref{th::homo} and similar techniques from \cite{DP2015} to 
present a Positivstellensatz for homogeneous polynomial matrices 
being PD on the intersection of $\X$ with the nonnegative orthant.
Denote by $\be$ the column vector in $\RR^n$ of all ones.
\begin{theorem}\label{th::PolyaMPU}
Suppose that 
$F\in\bS[\bx]^p$ and $G\in\bS[\bx, \by]^{q}$ are homogeneous in $\bx$,
$F(\bx)\succ 0$ for all $\bx\in\RR^n_+\cap\X\setminus\{\mathbf{0}\}$, and $\Y$ is compact. 
Then, there exists $N\in\N$ and 
polynomial matrices 
\[
P_0=\sum_{|\ba|=N+\deg F}P_{0,\ba}\bx^{\ba}\in\bS[\bx]^p,\quad 
P=\sum_{|\ba|=N+\deg F-\deg_{\bx}G}P_{\ba}(\by)\bx^{\ba}\in\bS[\bx,\by]^{pq}
\]
which 
are homogeneous in $\bx$ and satisfy $P_{0,\ba}\succ 0$ for all $|\ba|=N+\deg F$, 
$P_{\ba}(\by)\succ 0$ for all $|\ba|=N+\deg F-\deg_{\bx}G$ and $\by\in\Y$, such that 
\[
(\be^\intercal\bx)^{N}F(\bx)=P_0(\bx)+\int_{\Y}\lram{P(\bx, \by)}{G(\bx, \by)}{p}
 \ud\nu(\by).
 \]
\end{theorem}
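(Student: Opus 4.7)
The plan is to reduce to Theorem \ref{th::homo} via the Dickinson--Povh substitution $x_i\mapsto x_i^2$, and then recover strict positivity of the coefficient matrices of $S_0$ and $S$ through a matrix P\'olya-type argument. A naive substitution would only produce coefficients that are PSD (rather than strictly PD) matrix functions, so I would introduce two small perturbations at the outset whose effect the final P\'olya step can exploit to promote PSD to PD.

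First I would set $\widetilde{F}(\bx):=F(x_1^2,\ldots,x_n^2)$ and $\widetilde{G}(\bx,\by):=G(x_1^2,\ldots,x_n^2,\by)$, which are homogeneous of even degree in $\bx$, and define $\widetilde{\X}:=\{\bx\in\RR^n : \widetilde{G}(\bx,\by)\succeq 0,\ \forall\by\in\Y\}$. The correspondence $\bx\in\widetilde{\X}\setminus\{\mathbf{0}\}\iff (x_1^2,\ldots,x_n^2)\in\RR^n_+\cap\X\setminus\{\mathbf{0}\}$ yields $\widetilde{F}\succ 0$ on $\widetilde{\X}\setminus\{\mathbf{0}\}$. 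After replacing $F$ by $(\be^\intercal\bx)^{d_G}F$ if necessary, I may assume $d_F\geq d_G$. For small $\varepsilon_1,\varepsilon_2>0$, the perturbed polynomial matrix
\[
\widetilde{F}-\varepsilon_1\|\bx\|^{2d_F}I_p-\varepsilon_2\int_\Y\lram{\|\bx\|^{2(d_F-d_G)}I_{pq}}{\widetilde{G}}{p}\,\ud\nu(\by)
\]
is still homogeneous of degree $2d_F$ in $\bx$ and remains PD on $\widetilde{\X}\setminus\{\mathbf{0}\}$, since on $\widetilde{\X}$ the integral equals the PSD quantity $\|\bx\|^{2(d_F-d_G)}I_p\int_\Y\tr{\widetilde{G}(\bx,\by)}\,\ud\nu(\by)$, whose size (relative to $\widetilde{F}$) is controlled by compactness of $\widetilde{\X}\cap S^{n-1}$. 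Applying Theorem \ref{th::homo} (the Carleman condition holds trivially because $\Y$ is compact) and rearranging produces $N_0\in\N$ and SOS matrices $\widetilde{\Sigma}_0,\widetilde{\Sigma}$ homogeneous in $\bx$ with
\[
\|\bx\|^{2N_0}\widetilde{F}=\widetilde{\Sigma}_0+\varepsilon_1\|\bx\|^{2(N_0+d_F)}I_p+\int_\Y\lram{\widetilde{\Sigma}+\varepsilon_2\|\bx\|^{2(N_0+d_F-d_G)}I_{pq}}{\widetilde{G}}{p}\,\ud\nu(\by).
\]

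Next I would exploit the invariance of both sides under $x_i\mapsto -x_i$ to symmetrize $\widetilde{\Sigma}_0$ and $\widetilde{\Sigma}$ over all $2^n$ sign patterns; the averages are still SOS matrices and involve only even powers of each $x_i$. Substituting $z_i=x_i^2$ then gives
\[
(\be^\intercal\bz)^{N_0}F(\bz)=H_0(\bz)+\int_\Y\lram{H(\bz,\by)}{G(\bz,\by)}{p}\,\ud\nu(\by),
\]
where $H_0:=T_0(\bz)+\varepsilon_1(\be^\intercal\bz)^{N_0+d_F}I_p$ and $H:=T(\bz,\by)+\varepsilon_2(\be^\intercal\bz)^{N_0+d_F-d_G}I_{pq}$; here $T_0$ and $T$ come from the symmetrized SOS matrices and are therefore PSD on $\RR^n_+$ and on $\RR^n_+\times\RR^m$, respectively. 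The perturbations force $H_0\succ 0$ on $\RR^n_+\setminus\{\mathbf{0}\}$ and $H\succ 0$ on $(\RR^n_+\setminus\{\mathbf{0}\})\times\Y$ uniformly (using compactness of $\Y$). Multiplying the identity by $(\be^\intercal\bz)^M$ and invoking a matrix version of P\'olya's theorem, once for $H_0$ and once for $H$, yields coefficient matrices $S_{0,\ba}\succ 0$ and $S_\ba(\by)\succ 0$ for all relevant $\ba$ and all $\by\in\Y$ when $M$ is sufficiently large; the conclusion then follows with $N:=N_0+M$, $S_0:=(\be^\intercal\bz)^MH_0$, and $S:=(\be^\intercal\bz)^MH$.

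I expect the main obstacle to be the \emph{uniform matrix P\'olya theorem} needed in the last step: if $H(\bz,\by)$ is a polynomial matrix homogeneous in $\bz$, continuous in $\by$, and strictly PD on $(\RR^n_+\setminus\{\mathbf{0}\})\times\Y$ for a compact $\Y$, then for some $M\in\N$ the expansion $(\be^\intercal\bz)^M H(\bz,\by)=\sum_\ba S_\ba(\by)\bz^\ba$ satisfies $S_\ba(\by)\succ 0$ for every $\ba$ and every $\by\in\Y$. I would establish it by applying the scalar P\'olya theorem (together with a Powers--Reznick-type effective bound) to $\bv^\intercal H(\bz,\by)\bv$ and by running a compactness argument over the parameters $(\bv,\by)\in S^{pq-1}\times\Y$. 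The surrounding bookkeeping---matching degrees and justifying that the preliminary multiplication of $F$ by $(\be^\intercal\bx)^{d_G}$ is harmless---is routine.
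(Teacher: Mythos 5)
Your proposal is correct and follows essentially the same route as the paper: the substitution $x_i\mapsto x_i^2$ reducing to Theorem \ref{th::homo} is the paper's Theorem \ref{th::homo2}, the $\varepsilon$-perturbation making the weights uniformly PD on the simplex is Lemma \ref{lem::poshom}, and the final promotion of PSD to PD coefficients is the matrix P\'olya step. The ``uniform matrix P\'olya theorem'' you flag as the main obstacle is already available with an effective bound as Scherer--Hol's result (Theorem \ref{th::Polya}), and uniformity over $\by\in\Y$ follows exactly as you suggest from compactness of $\Y$ and the uniform lower bound $\varepsilon$.
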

The proof is postponed to Section \ref{sec::prfThPolya}.

In particular, when $\Y$ is a semialgebriac set defined by PMIs, 
the coefficient matrices $P_{\ba}(\by)$ in Theorem~\ref{th::PolyaMPU}
have SOS-structured representations.

\begin{corollary}\label{cor::PPMPU}
Suppose that 
$F\in\bS[\bx]^p$ and $G\in\bS[\bx, \by]^{q}$ are homogeneous in $\bx$, and
$F(\bx)\succ 0$ for all $\bx\in\RR^n_+\cap\X\setminus\{\mathbf{0}\}$. Moreover, suppose that 
\begin{equation}\label{def::Y}
\Y=\{\by\in\RR^m \mid H_1(\by)\succeq 0, \ldots, H_t(\by)\succeq 0\},\ 
H_i(\by)\in\bS[\by]^{\ell_i},\ \ell_i\in\N, \ i\in[t],
\end{equation}
and the Archimedean condition holds for the quadratic module $\QM^{pq}(\bH)$ where
$\bH\coloneqq\{H_1,\ldots,H_t\}$. Then, there exists $N\in\N$ and 
polynomial matrices 
\[
P_0=\sum_{|\ba|=N+\deg F}P_{0,\ba}\bx^{\ba}\in\bS[\bx]^p,\quad 
P=\sum_{|\ba|=N+\deg F-\deg_{\bx}G}P_{\ba}(\by)\bx^{\ba}\in\bS[\bx,\by]^{pq}
\]
which 
are homogeneous in $\bx$ and satisfy $P_{0,\ba}\succ 0$ for all $|\ba|=N+\deg F$, 
$P_{\ba}(\by)\in\QM^{pq}(\bH)$ for all $|\ba|=N+\deg F-\deg_{\bx}G$,
such that 
\[
(\be^\intercal\bx)^{N}F(\bx)=P_0(\bx)+\int_{\Y}\lram{P(\bx, \by)}{G(\bx, \by)}{p}
 \ud\nu(\by).
 \]
\end{corollary}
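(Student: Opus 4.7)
The plan is to combine Theorem \ref{th::PolyaMPU} with Scherer-Hol's Positivstellensatz (Theorem \ref{th::psatz}) applied coefficient-wise in the variable $\by$. Since $\QM(\bH)$ is archimedean by hypothesis, the set $\Y$ is bounded and hence compact, so together with the assumed Carleman condition all hypotheses needed for Theorem \ref{th::PolyaMPU} are met.

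First I would invoke Theorem \ref{th::PolyaMPU} to obtain an integer $N\in\N$ together with polynomial matrices
$S_0=\sum_{|\ba|=\deg S_0} S_{0,\ba}\bx^{\ba}\in\bS[\bx]^p$ and $S=\sum_{|\ba|=\deg_\bx S} S_{\ba}(\by)\bx^{\ba}\in\bS[\bx,\by]^{pq}$, both homogeneous in $\bx$, such that $S_{0,\ba}\succ 0$ for all $\ba$ with $|\ba|=\deg S_0$, each coefficient $S_{\ba}(\by)\in\bS[\by]^{pq}$ satisfies $S_{\ba}(\by)\succ 0$ for every $\by\in\Y$ and every $\ba$ with $|\ba|=\deg_\bx S$, and the identity
\[
(\be^\intercal\bx)^N F(\bx) = S_0(\bx) + \int_{\Y} \lram{S(\bx,\by)}{G(\bx,\by)}{p}\,\ud\nu(\by)
\]
holds.

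Second, I would upgrade the pointwise positivity of each coefficient matrix on $\Y$ to an algebraic certificate. Since $\QM(\bH)$ is archimedean and $S_{\ba}(\by)\in\bS[\by]^{pq}$ is positive definite on $\Y=\{\by\in\RR^m\mid H_i(\by)\succeq 0,\ i\in[t]\}$, Scherer-Hol's Positivstellensatz (Theorem \ref{th::psatz}) applied separately to each $S_{\ba}(\by)$ yields $S_{\ba}(\by)\in\QM(\bH)$, understood as the matrix quadratic module of the appropriate size $pq$. Substituting these certificates back into the expression for $S(\bx,\by)$ does not alter the $\bx$-degree structure, since the replacement only affects the $\by$-dependence of the coefficients, so the displayed identity persists with the refined structural property that every $S_{\ba}(\by)\in\QM(\bH)$.

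The argument is essentially a direct composition of Theorem \ref{th::PolyaMPU} with Scherer-Hol's theorem applied in the $\by$-variable, and I do not anticipate any substantive obstacle. The only bookkeeping is to verify that $\bx$-homogeneity of $S$ is preserved under the coefficient substitution (which is immediate, as the substitution is purely in $\by$) and to confirm that the symbol $\QM(\bH)$ in the conclusion is to be interpreted as the $pq\times pq$ matrix quadratic module; both points are routine.
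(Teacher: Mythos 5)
Your proposal is correct and is essentially identical to the paper's argument, which simply combines Theorem \ref{th::PolyaMPU} with Scherer--Hol's Positivstellensatz (Theorem \ref{th::psatz}) applied to each coefficient matrix $S_{\ba}(\by)$, the compactness of $\Y$ needed for Theorem \ref{th::PolyaMPU} being guaranteed by the Archimedean hypothesis on $\QM(\bH)$ exactly as you observe.
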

\begin{proof}
    It follows from Theorem \ref{th::PolyaMPU} and Scherer-Hol's Positivestellensatz (Theorem \ref{th::psatz}).
\end{proof}
In analogy with Corollary \ref{cor::inhomo}, we can obtain the following inhomogeneous counterpart of 
Theorem \ref{th::PolyaMPU}. We omit the similar proof for the sake of clarity. 
\begin{corollary}\label{cor::PolyaMPU}
Suppose that 
$F\in\bS[\bx]^p$ and $G\in\bS[\bx, \by]^{q}$,
$F(\bx)$ is PSD on $\RR^n_+\cap\X$, and $\Y$ is compact. 
Then for any $\varepsilon>0$, there exists $N_{\varepsilon}\in\N$ and 
polynomial matrices 
\[
P_0=\sum_{|\ba|\le N_{\varepsilon}+2d_F}P_{0,\ba}\bx^{\ba}\in\bS[\bx]^p,\quad 
P=\sum_{|\ba|\le N_{\varepsilon}+2d_F-2d_G}P_{\ba}(\by)\bx^{\ba}\in\bS[\bx,\by]^{pq}
\]
satisfying 
$P_{0,\ba}\succ 0$ for all $|\ba|\le N_{\varepsilon}+2d_F$, 
$P_{\ba}(\by)\succ 0$ for all $|\ba|\le N_{\varepsilon}+2d_F-2d_G$ and $\by\in\Y$, and 
\[
(1+\be^\intercal\bx)^{N_{\varepsilon}}\left(F(\bx)+\varepsilon(1+\be^\intercal\bx)^{2d_F}I_p\right)
=P_0(\bx)+\int_{\Y}\lram{P(\bx, \by)}{G(\bx, \by)}{p}
 \ud\nu(\by).
 \]
\end{corollary}

\section{Applications to robust PMI constrained optimization}\label{sec::app}
Verifying PMIs over a prescribed set has a wide range of applications in many fields.
For example, many control problems for systems of ordinary differential equations can be formulated as convex optimization problems with PMI constraints that must be satisfied over a specified portion of the state space \cite{HL2006,HL2012,ichihara2009optimal,pozdyayev2014atomic,vanantwerp2000tutorial}.
These problems are typically formulated as follows:
\[
\inf_{\bg\in\RR^r}\ \bc^\intercal \bg\ \text{ s.t. } \ \Lambda(\bx, \bg)\coloneqq \Lambda_0(\bx)-\sum_{i=1}^r \Lambda_i(\bx)\gamma_i\succeq 0,\ \forall \bx\in\{\bx\in\RR^n \mid H(\bx)\succeq 0\},
\]
where  $\bc\in\RR^r$, $\Lambda_0,\ldots, \Lambda_r\in\bS[\bx]^p$ and $H\in\bS[\bx]^q$. 

However, due to estimation errors or lack of information, 
data from real-world problems often involve uncertainty. 
Therefore, ensuring the robustness of PMIs over a prescribed set under uncertainty is a critical issue, which could be formulated as the following robust optimization problem:
\begin{equation}\label{eq::pmo}
\tau^{\star}\coloneqq\inf_{\bg\in\RR^r}\ \bc^\intercal\bg\ \text{ s.t. } \ \Lambda(\bx, \bg)\succeq 0,\ \forall \bx\in\X\coloneqq 
\{\bx\in\RR^n\mid G(\bx,\by)\succeq 0,\ \forall \by\in\Y\},
\end{equation}
where 
$G(\bx, \by)\in\bS[\bx, \by]^q$ and $\Y\subset\RR^m$ is closed.

Consequently, Positivstellens\"atze for polynomial matrices with UQs
developed in this paper serve as a powerful mathematical tool for addressing this issue. 
Indeed, by leveraging the SOS-structured certificates provided by Positivstellens\"atze for the positive definiteness of $\Lambda(\bx, \bg)$ over $\X$, we are able to establish hierarchies of SDP relaxations for the problem \eqref{eq::pmo}.

\subsection{The compact case} 
If $\X$ is compact, then one could apply Theorem \ref{th::SHpsatz} to construct a hierarchy of SDP relaxations for \eqref{eq::pmo}:
\begin{equation}\label{eq::sdp4pmo}
\left\{
\begin{aligned}
    \tau(k)\coloneqq \inf_{\bg,S_0,S}&\ \bc^\intercal\bg\\ 
\text{s.t.}& \ \Lambda(\bx, \bg)=S_0(\bx)+\int_{\Y}\lram{S(\bx,\by)}{G(\bx,\by)}{p}\ud\nu(\by),\\
&\ S_0\in\bS[\bx]^{p}, S\in\bS[\bx, \by]^{pq} \,\,\text{are SOS matrices},\\
&\ \deg S_0\le 2k,\ \deg_{\bx}S\le 2(k-\lceil\deg_{\bx}G/2\rceil),\ \deg_{\by}S\le 2(k-\lceil\deg_{\by}G/2\rceil).
\end{aligned}\right.
\end{equation}
It is clear that $\{\tau(k)\}_{k\in\N}$ is a sequence of non-increasing upper bounds on $\tau^{\star}$.
\begin{theorem}\label{th::sdp4pmo}
Suppose that Assumptions 
\ref{assump2} and \ref{assump::carleman} hold and there exists a point $\bar{\bg}\in\RR^r$
such that $\Lambda(\bx, \bar{\bg})\succ 0$ for all $\bx\in\X$. 
Then we have $\lim_{k\to\infty}\tau(k)=\tau^{\star}$. 
\end{theorem}
\begin{proof}
Note that for any feasible point $\bg'$ of \eqref{eq::pmo} and any $\varepsilon>0$, the point $(1-\varepsilon)\bg'+\varepsilon\bar{\bg}$ satisfies $\Lambda(\bx, (1-\varepsilon)\bg'+\varepsilon\bar{\bg})\succ 0$ on $\X$. Then, the conclusion follows by Theorem \ref{th::SHpsatz}. 
\end{proof}

Given that the SDP problem 
derived from \eqref{eq::sdp4pmo} scales rapidly and becomes computationally prohibitive as $k$ increases, 
we revise it by imposing separate degree constraints on $\bx$ and $\by$, leading to the following problem
\begin{equation}\label{eq::sdp4pmo2}
\left\{
\begin{aligned}
    \tau(k_{\bx},k_{\by})\coloneqq \inf_{\bg,S_0,S}&\ \bc^\intercal\bg\\ 
\text{s.t.}& \ \Lambda(\bx, \bg)=S_0(\bx)+\int_{\Y}\lram{S(\bx,\by)}{G(\bx,\by)}{p}\ud\nu(\by),\\
&\ S_0\in\bS[\bx]^{p}, S\in\bS[\bx, \by]^{pq} \,\,\text{are SOS matrices},\\
&\ \deg S_0\le 2k_{\bx},\ \deg_{\bx}S\le 2(k_{\bx}-\lceil\deg_{\bx}G/2\rceil),\ \deg_{\by}S\le 2(k_{\by}-\lceil\deg_{\by}G/2\rceil).
\end{aligned}\right.
\end{equation}
If $\Y$ is low-dimensional, we can maintain a small $k_{\bx}$ while increasing $k_{\by}$ separately; 
this allows us to solve \eqref{eq::sdp4pmo2} in reasonable time and obtain a nontrivial upper bound 
$\tau(k_{\bx},k_{\by})$ of $\tau^{\star}$.

In practice, the SDP relaxation \eqref{eq::sdp4pmo2} requires an efficient evaluation of the integral 
$\int_{\Y}\by^{\bb}\ud\nu(\by)$. If $\nu$ is chosen to be the Lebesgue measure on $\Y$ (as in the 
following illustrating examples), this integral is available in closed form or by numerical methods 
whenever $\Y$ is a simple domain; examples include hypercubes, spheres \cite{Folland2001}, 
balls \cite{Folland2001}, and polytopes \cite{LattE}. 
All numerical experiments in the sequel were carried out on a PC with  
an 8-Core Intel i7 3.8GHz CPU and 8G RAM, using the SDP solver MOSEK \cite{mosek} and
a customized version of the MATLAB optimization toolbox YALMIP \cite{YALMIP}. 

\begin{example}\label{ex::1}{\rm
Consider the feasible set of the problem \eqref{eq::pmo}:
\[
\mathcal{F}=\{\bg\in\RR^r\mid \Lambda(\bx,\bg)\succeq 0, \forall \bx\in\X\}.
\]
In this example, we illustrate how Theorem \ref{th::SHpsatz} enables one to approximate the 
convex set of vectors $\bg$ for which $\Lambda(\bx,\bg)$ is PSD on $\X$ -- that is, 
to obtain approximate semidefinite representations of $\mathcal{F}$.

For any $k_{\bx}, k_{\by}\in\N$ with $k_{\bx}\ge\lceil\deg_{\bx}G/2\rceil$ and 
$k_{\by}\ge\lceil\deg_{\by}G/2\rceil$, let $\nu$ in \eqref{eq::sdp4pmo2} be the Lebesgue measure on $\Y$ and consider 
\[
\mathcal{F}(k_{\bx}, k_{\by})=
\left\{
\begin{array}{c|c}
   \bg\in\RR^r  &
   \begin{aligned}
&\Lambda(\bx, \bg)=S_0(\bx)+\int_{\Y}\lram{S(\bx,\by)}{G(\bx,\by)}{p}\ud\by,\\
&S_0\in\bS[\bx]^{p}, S\in\bS[\bx, \by]^{pq} \,\,\text{are SOS matrices},\\
&\deg S_0\le 2k_{\bx},\ \deg_{\bx}S\le 2(k_{\bx}-\lceil\deg_{\bx}G/2\rceil),\ \deg_{\by}S\le 2(k_{\by}-\lceil\deg_{\by}G/2\rceil).
\end{aligned}
\end{array}
\right\}.
\]
Clearly, $\mathcal{F}(k_{\bx}, k_{\by})$ is an inner approximation of $\mathcal{F}$. 
On the other hand, Theorem \ref{th::SHpsatz} guarantees that 
any $\bg\in\RR^r$ for which $\Lambda(\bx,\bg)$ is PD on $\X$ belongs to $\mathcal{F}(k_{\bx}, k_{\by})$
for sufficiently large $k_{\bx}$ and $k_{\by}$. Therefore, the sets $\mathcal{F}(k_{\bx}, k_{\by})$ 
can approximate $\mathcal{F}$ from inside arbitrarily accurately in the sense that any compact subset 
of the interior of $\mathcal{F}$ is included in $\mathcal{F}(k_{\bx}, k_{\by})$ for some sufficiently 
large integers $k_{\bx}$ and $k_{\by}$.

Let us construct an illustrating example. Let $n=r=2$, $m=1$, $\Y=[0, 1]$,
    \[
    G(\bx,y)=
    \left[
    \begin{array}{cc}
        (1/2-y)^2 & y(x_1^2+x_2^2-1) \\
        y(x_1^2+x_2^2-1) & x_1+x_2-1
    \end{array}
    \right],
    \]
    and
    \[
    \Lambda(\bx,\bg)=
    \left[
    \begin{array}{cc}
        1-(x_1\gamma_1-x_2\gamma_2+1/2) & 2(x_1\gamma_2+x_2\gamma_1) \\
        2(x_1\gamma_2+x_2\gamma_1) & 1+(x_1\gamma_1-x_2\gamma_2+1/2) 
    \end{array}
    \right].
    \]
It is clear that 
\[
\X=\{\bx\in\RR^2 \mid x_1^2+x_2^2=1,\ x_1x_2\ge 0\}.
\]
Geometrically, the set $\mathcal{F}$ is constructed as the intersection of all rotated copies of 
the shape defined by $(\gamma_1+1/2)^2+4\gamma_2^2\le 1$ in the $\bg$-plane as it is continuously 
rotated clockwise by $90^\circ$ about the origin. The resulting set $\mathcal{F}$ (light blue region 
with blue outline) is shown in Figure \ref{fig::ex1}. 
Three inner approximations -- $\mathcal{F}(1,1)$, $\mathcal{F}(1,2)$, and $\mathcal{F}(1,6)$
-- each depicted with a red outline, appear in Figures \ref{fig::ex1}. 
These red outlines were obtained by solving and extracting optimal solutions of $\eqref{eq::pmo}$ with
objective functions $\gamma_1\cos\theta + \gamma_2\sin\theta$ over $80$ equispaced values of $\theta$ 
in the interval $[0,2\pi]$.
\begin{figure}
\centering
\scalebox{0.5}{
\includegraphics[trim=60 195 80 190,clip]{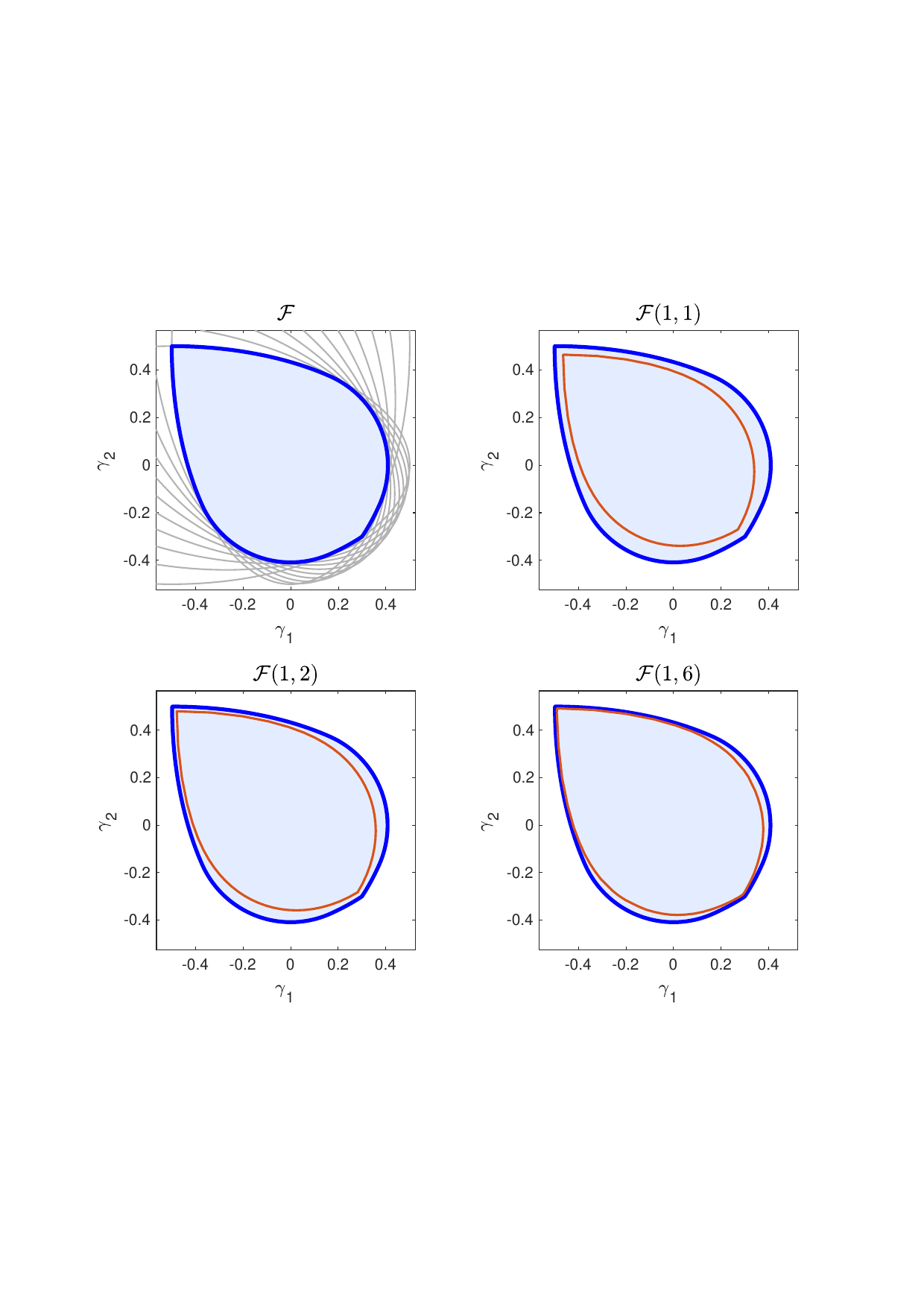}}
\caption{The region $\mathcal{F}$ and three inner approximations $\mathcal{F}(1,1)$, $\mathcal{F}(1,2)$,
$\mathcal{F}(1,6)$ in Example~\ref{ex::1}.}\label{fig::ex1}
\end{figure} 
  \qed  }
\end{example}

%
\subsection{Exploiting sparsity}
We illustrate the computational advantage of our sparsity-exploiting Positivstellensatz 
when the correlative sparsity pattern in Assumption \ref{assump_CS} is satisfied.
Consider the problem 
\[
f^{\star}\coloneqq\min\ f(\bx)\ \text{ s.t. } \bx\in\widehat{\X},
\]
where $f(\bx)\in\RR[\bx]$ and the set $\widehat{\X}$ is defined in \eqref{def::sparseX}. 
By Theorem \ref{th::SHpsatz}, dense SDP relaxations for such a problem can be formulated as 
\begin{equation}\label{eq::minf}
\left\{
\begin{aligned}
    \tau(k_{\bx}, k_{\by})\coloneqq \sup_{\gamma,\sigma_0,S_j}&\ \gamma\\ 
\text{s.t.}& \ f(\bx)-\gamma=\sigma_0(\bx)+\sum_{j=1}^s \int_{\Y}
\left\langle S_j(\bx,\by), G_j(\bx,\by)\right\rangle\ud\nu(\by),\\
&\ \sigma_0\in\RR[\bx], S_j\in\bS[\bx, \by]^{q_j},\, j\in[s],\,\ \ \text{are SOS},\\
&\ \deg \sigma_0\le 2k_{\bx},\ \deg_{\bx}S_j\le 2(k_{\bx}-\lceil\deg_{\bx}G_j/2\rceil),\ \deg_{\by}S_j\le 2(k_{\by}-\lceil\deg_{\by}G_j/2\rceil).
\end{aligned}\right.
\end{equation}

For any $k_{\bx}\ge \lceil\deg_{\bx}G_j/2\rceil$ and 
$k_{\by}\ge \lceil\deg_{\by}G_j/2\rceil$, the number $\tau(k_{\bx}, k_{\by})$ provides a lower bound of $f^{\star}$.
The sequence $\{\tau(k,k)\}_{k\in\N}$, the limit $\tau(\infty,\infty)\coloneqq\lim_{k\to\infty}\tau(k,k)$ exists. By Theorem \ref{th::sdp4pmo}, we have $\tau(\infty,\infty)=\tau^{\star}$ if Assumptions \ref{assump2} and \ref{assump::carleman} hold.

Provided the presence of the correlative sparsity pattern stated in 
Assumption \ref{assump_CS}, sparse SDP relaxations
for such a problem can be constructed as follows:
\begin{equation}\label{eq::minfcs}
\left\{
\begin{aligned}
    \tau^{\csp}(k_{\bx}, k_{\by})\coloneqq \sup_{\gamma,\sigma_0,S_j}&\ \gamma\\ 
\text{s.t.}& \ f(\bx)-\gamma=\sum_{\ell=1}^t\left( \sigma_{\ell,0}(\bx(\mI_{\ell}))+ \sum_{j\in\mJ_{\ell}}\int_{\Y}
\left\langle S_{j}(\bx(\mI_{\ell}),\by), G_j(\bx(\mI_{\ell}),\by)\right\rangle\ud\nu(\by)\right),\\
&\ \sigma_{\ell,0}\in\RR[\bx(\mI_{\ell})], S_{j}\in\bS[\bx(\mI_{\ell}), \by]^{q_j},
\ j\in\mJ_{\ell},\ \ell\in[t],\, \text{are SOS},\\
&\ \deg \sigma_{\ell, 0}\le 2k_{\bx},\ \deg_{\bx}S_j\le 2(k_{\bx}-\lceil\deg_{\bx}G_j/2\rceil),\ \deg_{\by}S_j\le 2(k_{\by}-\lceil\deg_{\by}G_j/2\rceil).
\end{aligned}\right.
\end{equation}
For the sequence $\{\tau^{\csp}(k,k)\}_{k\in\N}$, if the assumptions in Theorem \ref{th::sparse} hold, we have the convergence 
\[\tau^{\csp}(\infty,\infty)\coloneqq\lim_{k\to\infty}\tau^{\csp}(k,k)=\tau^{\star}.
\]
When the number $\max\{|\mI_{\ell}|, \ell\in[t]\}$ is small, 
dramatic computational savings can be expected.
\begin{example}\label{ex::cs}{\rm
Consider the problem $f^{\star}\coloneqq\min_{\bx\in\widehat{X}}f(\bx)$ where $f(\bx)=\sum_{i=1}^n(x_i+1)^4$,
    \[
    \widehat{\X}=\{\bx\in\RR^n\mid G_j(\bx, y)\succeq 0, \ \forall j\in[n-1],\ y\in\Y\},\quad
    G_j(\bx, y)=
    \left[
    \begin{array}{cc}
     1-yx_j  & x_{j+1} \\
     x_{j+1}    & 1+yx_j
    \end{array}\right],\ j\in[n-1],
    \]
and $\Y=[0, 1]$. 
Clearly, Assumption \ref{assump_CS} is satisfied with $t=n-1$, $\mI_{\ell}=\{\ell,\ell+1\}$ and 
$\mJ_{\ell}=\{\ell\}$ for $\ell\in[n-1]$.
It is easy to see that 
\[
\widehat{\X}=\{\bx\in\RR^n\mid x_i^2+x_{i+1}^2\le 1,\ i\in[n-1]\}.
\]
Since the problem $\min_{\bx\in\widehat{X}}f(\bx)$ is convex, numerical values for $f^{\star}$ 
can be obtained by solving the associated KKT system. These results can then be used to evaluate 
the accuracy of the SDP lower bounds $\tau(k_{\bx}, k_{\by})$ and $\tau^{\csp}(k_{\bx}, k_{\by})$.
We choose $\nu$ in the SDP relaxations \eqref{eq::minf} and \eqref{eq::minfcs} to be the 
Lebesgue measure on $\Y$ and present the numerical results in Table \ref{tab::1}, where 
the symbol `--/--' indicates that MOSEK runs out of memory. From the table, we could see that 
dramatic computational advantages are gained by exploiting correlative sparsity.
\qed

\begin{table}[htb]\caption{Computational results for Example \ref{ex::cs}.}{\small
	\centering
	\begin{tabular}{ccccccccc}
\midrule[0.8pt]
\multirow{2}{*}{$n$} && \multirow{2}{*}{$f^{\star}$} &
&\multicolumn{2}{c}{$(k_{\bx},k_{\by}$)=(2, 10)} & &\multicolumn{2}{c}{$(k_{\bx},k_{\by}$)=(2, 15)}  \\
		\cline{5-6} \cline{8-9} 
&& & &
$\tau(k_{\bx}, k_{\by})$/time & $\tau^{\csp}(k_{\bx}, k_{\by})$/time
& &
$\tau(k_{\bx}, k_{\by})$/time & $\tau^{\csp}(k_{\bx}, k_{\by})$/time\\
\midrule[0.4pt]
		$3$&& 0.0206 &&0.0184/13.1s& 0.0184/4.45s && 0.0191/83.3s&0.0191/19.9s\\
        $4$&& 0.0294 &&0.0262/65.3s& 0.0262/9.23s && --/-- &0.0273/36.5s\\
        $5$&& 0.0359 &&--/--& 0.0320/13.9s && --/--&0.0333/45.1s\\
        $6$&& 0.0441 &&--/--& 0.0394/19.2s && --/--&0.0409/62.5s\\
\midrule[0.8pt]
	\end{tabular}
		\label{tab::1}
	}
\end{table}
}
\end{example}

\subsection{The non-compact case}
Let us consider the problem \eqref{eq::pmo} with a non‑compact set $\X$. For such a problem, 
after fixing a constant $\varepsilon > 0$, we can apply Corollary~\ref{cor::inhomo} to 
construct the following hierarchy of SDP relaxations:
\begin{equation}\label{eq::sdp4nc}
\left\{
\begin{aligned}
    \tau_{\varepsilon}(k_{\bx},k_{\by})\coloneqq \inf_{\bg,S_0,S}&\ \bc^\intercal\bg\\ 
\text{s.t.}& \ \theta^{k_{\bx}-d_{\Lambda}} (\Lambda(\bx, \bg)+\varepsilon\theta^{d_{\Lambda}}I_p)=S_0(\bx)+\int_{\Y}\lram{S(\bx,\by)}{G(\bx,\by)}{p}\ud\nu(\by),\\
&\ S_0\in\bS[\bx]^{p}, S\in\bS[\bx, \by]^{pq} \,\,\text{are SOS matrices},\\
&\ \deg S_0\le 2k_{\bx},\ \deg_{\bx}S\le 2(k_{\bx}-d_G),\ \deg_{\by}S\le 2(k_{\by}-\lceil \deg_{\by}G/2\rceil),
\end{aligned}\right.
\end{equation}
where $d_{\Lambda}\coloneqq\max_{i,j\in[p]}\lfloor\deg_{\bx}(\Lambda_{ij})/2\rfloor+1$. 
Let $\mathcal{F}_{\varepsilon}(k_{\bx},k_{\by})$ denote the feasible set of \eqref{eq::sdp4nc}.

For the sequence $\{\tau_{\varepsilon}(\ell,\ell)\}_{\varepsilon>0,\ell\in\N}$, let $\tau_0(\infty,\infty)\coloneqq\lim_{\varepsilon\to 0}\lim_{\ell\to\infty}\tau_{\varepsilon}(\ell,\ell)$. 
\begin{theorem}\label{th::conv_nc}
    Let Assumption~\ref{assump::carleman} hold. Then the limit $\tau_0(\infty,\infty)$ exists, and 
    $\tau_0(\infty,\infty)\le \tau^{\star}$. 
    Moreover, if the optimal solution set $\mathcal{S}$ of \eqref{eq::pmo} is nonempty and bounded, 
    we have $\tau_0(\infty,\infty)=\tau^{\star}$.
\end{theorem}
\begin{proof}
    Since Assumption~\ref{assump::carleman} holds, Corollary~\ref{cor::inhomo} implies that, 
for any $\varepsilon>0$ and sufficiently large $\ell$,
$\mathcal{F}_{\varepsilon}(\ell,\ell)$ contains the feasible set $\mathcal{F}$ 
of \eqref{eq::pmo} and hence $\tau_{\varepsilon}(\ell,\ell) \le \tau^{\star}$. 
It is clear that $\tau_{\varepsilon}(\ell,\ell)$ is non-increasing as $\ell\to\infty$
for fixed $\varepsilon>0$ and is non-decreasing as $\varepsilon\to 0$ for fixed $\ell$.
Hence, the limit $\tau_0(\infty,\infty)$ exists and $\tau_0(\infty,\infty)\in[-\infty, \tau^{\star}]$.

Assuming that the optimal solution set $\mathcal{S}$ of \eqref{eq::pmo} is nonempty and bounded, we 
show that $\tau_0(\infty,\infty)=\tau^{\star}$. Suppose on the contrary that 
$\tau_0(\infty,\infty)<\tau^{\star}$. Then, there exists a sequence $\{\varepsilon_j\}_{j\in\N}$
with $\lim_{j\to\infty}\varepsilon_j=0$ and a sequence $\{\bg^{(\ell_j)}\}_{j\in\N}$ of feasible points 
of \eqref{eq::sdp4nc} with $(\varepsilon,k_{\bx},k_{\by})=(\varepsilon_j, \ell_j, \ell_j)$ such that 
for each $j\in\N$, $\bc^\intercal \bg^{(\ell_j)}\le \tau^{\star}-\delta$ for some $\delta>0$.
We claim that $\{\bg^{(\ell_j)}\}_{j\in\N}$ is bounded. Suppose that 
$\Vert\bg^{(\ell_j)}\Vert\to\infty$ as $j\to\infty$ and without loss of generality that 
$\lim_{j\to\infty}\bg^{(\ell_j)}/\Vert\bg^{(\ell_j)}\Vert=\bv$ for some $\bv=(v_1,\ldots,v_r)\in\RR^r$ 
with $\Vert \bv\Vert=1$. Since $\bc^\intercal \bg^{(\ell_j)}$ is bounded by $\tau^{\star}-\delta$ 
for all $j\in\N$,
we have $\bc^\intercal \bv=0$. Moreover, for arbitrarily fixed $\bu\in\X$, we have 
\begin{equation}\label{eq::feasible}
\Lambda(\bu, \bg^{(\ell_j)})+\varepsilon_j\theta(\bu)^{d_{\Lambda}}I_p=
\Lambda_0(\bu)-\sum_{i=1}^r \Lambda_i(\bu)\gamma^{(\ell_j)}_i+\varepsilon_j\theta(\bu)^{d_\Lambda}I_p
\succeq 0, \ \forall j\in\N.
\end{equation}
Dividing the left-hand side of \eqref{eq::feasible} by $\Vert\bg^{(\ell_j)}\Vert$ and letting $j\to\infty$,
we get $-\sum_{i=1}^r \Lambda_i(\bu)v_i\succeq 0$. Since $\bu$ is an arbitrary point in $\X$ and $\bc^\intercal \bv=0$, 
for any point $\bg^{\star}\in\mathcal{S}$, we have $\bg^{\star}+t\bv\in\mathcal{S}$ for all $t>0$,
which contradicts the boundedness of $\mathcal{S}$. As we have proved that 
$\{\bg^{(\ell_j)}\}_{j\in\N}$ is bounded, we can assume without loss of generality that 
$\lim_{j\to\infty}\bg^{(\ell_j)}=\hat{\bg}$ for some $\hat{\bg}\in\RR^r$. Then letting 
$j\to\infty$ in \eqref{eq::feasible} yields the result that $\hat{\bg}$ is feasible to \eqref{eq::pmo}.
Thus, we have
\[
\tau^{\star}\le \bc^\intercal \hat{\bg}=\lim_{j\to\infty}\bc^\intercal \bg^{(\ell_j)}\le \tau^{\star}-\delta,
\]
a contradiction.
\end{proof}

Now, let us consider the problem 
\begin{equation}\label{eq::pmo+}
\td{\tau}^{\star}\coloneqq\inf_{\bg\in\RR^r}\ \bc^\intercal\bg\ \text{ s.t. } \ \Lambda(\bx, \bg)\succeq 0,\ \forall \bx\in\X\cap\RR_+^n,
\end{equation}
where $\X$ is non-compact and defined in \eqref{eq::pmo}, and $\Y$ is defined
in \eqref{def::Y}.
After fixing a constant $\varepsilon > 0$, we apply Corollary~\ref{cor::PolyaMPU} to 
construct the following hierarchy of SDP relaxations:
\begin{equation}\label{eq::sdp4no}
\left\{
\begin{aligned}
    \tilde{\tau}_{\varepsilon}(k_{\bx},k_{\by})\coloneqq \inf_{\bg,S_0,S}&\ \bc^\intercal\bg\\ 
\text{s.t.}& \ (1+\be^\intercal\bx)^{2k_{\bx}-2d_{\Lambda}} (\Lambda(\bx, \bg)+\varepsilon(1+\be^\intercal\bx)^{2d_{\Lambda}}I_p)
=P_0(\bx)+\int_{\Y}\lram{P(\bx,\by)}{G(\bx,\by)}{p}\ud\nu(\by),\\
&\ P_0=\sum_{\ba\in\N^n_{2k_{\bx}}}P_{0,\ba}\bx^{\ba}\in\bS[\bx]^{p} 
\text{ with each } P_{0,\ba}\succeq 0,\\
&\ P=\sum_{\ba\in\N^n_{2(k_{\bx}-d_G)}}P_{\ba}(\by)\bx^{\ba}\in\bS[\bx, \by]^{pq},\ \deg_{\by}P\le 
2(k_{\by}-\lceil \deg_{\by}G/2\rceil),\\
&\ \text{ with each } P_{\ba}(\by)\in\mathcal{Q}^{pq}_{k_{\by}-\lceil \deg_{\by}G/2\rceil}(\bH).
\end{aligned}\right.
\end{equation}

For the sequence $\{\td{\tau}_{\varepsilon}(\ell,\ell)\}_{\varepsilon>0,\ell\in\N}$, let $\td{\tau}_0(\infty,\infty)\coloneqq\lim_{\varepsilon\to 0}\lim_{\ell\to\infty}\td{\tau}_{\varepsilon}(\ell,\ell)$. 
Recall the set $\bH$ in Corollary \ref{cor::PPMPU}. Similarly to Theorem \ref{th::conv_nc}, 
we have the following convergence result.
\begin{theorem}
    Let the Archimedean condition hold for the quadratic module $\QM^{pq}(\bH)$. Then 
    the limit $\td{\tau}_0(\infty,\infty)$ exists and $\td{\tau}_0(\infty,\infty)\le \tau^{\star}$. 
    Moreover, if the optimal solution set $\mathcal{S}$ of \eqref{eq::pmo+} is nonempty and bounded, 
    we have $\td{\tau}_0(\infty,\infty)=\tau^{\star}$.
\end{theorem}
Note that we set the same degree bounds on $\bx$ and $\by$ in the representations of $\Lambda(\bx,\by)$ in 
\eqref{eq::sdp4nc} and \eqref{eq::sdp4no}, so that we can compare the behaviors of 
$\tau_{\varepsilon}(k_{\bx},k_{\by})$ and $\tilde{\tau}_{\varepsilon}(k_{\bx},k_{\by})$ when applying 
both \eqref{eq::sdp4nc} and \eqref{eq::sdp4no} to \eqref{eq::pmo} if $\X=\X\cap\RR^n_+$.

\begin{example}\label{ex::nc}{\rm
    Consider the problem \eqref{eq::pmo} where we let $m=1$, $n=r=2$, $\Y=[0, 1]$, 
\[
G(\bx, y)=
    Q_1^\intercal\left[
    \begin{array}{cc}
        x_1+x_2 & 0 \\
        0 &-2y(1-y)+(1-y)^2x_1+y^2x_2
    \end{array}
    \right]Q_1,
    \]
    and
    \[
    \Lambda(\bx, \bg)=
    Q_2^\intercal\left[
    \begin{array}{cc}
    -2+\left(-\frac{\gamma_1}{2}+\frac{3\gamma_2}{2}+3\right)x_2+\left(\frac{3\gamma_1}{2}-\frac{\gamma_2}{2}+3\right)x_1 & 0 \\
    0 & -2-\gamma_1 x_2-\gamma_2 x_1
    \end{array}
    \right]Q_2.
\]
Here, $Q_1$ and $Q_2$ are two fixed arbitrary $2\times 2$ orthogonal matrices. 

For any fixed $(x_1,x_2)$, consider the function
\[
f(y) = -2y(1-y)+(1-y)^2x_1+y^2x_2=(x_1+x_2+2)y^2 -2(x_1+1)y + x_1.
\]
The evaluations $f(0)=x_1$ and $f(1)=x_2$ imply that $x_1\ge 0$ and $x_2\ge 0$. 
Assuming $x_1,x_2\ge 0$, the minimum of $f(y)$ is 
\[
f\left(\frac{x_1+1}{x_1+x_2+2}\right) = \frac{x_1x_2-1}{x_1+x_2+2}\ \text{ where }\ \frac{x_1+1}{x_1+x_2+2}\in\Y.
\]
Thus we have 
\[
\X=\{(x_1,x_2)\in\RR^2 \mid x_1\ge 0,\ x_2\ge 0,\ x_1x_2\ge 1\}.
\]
Then similarly, we can show that
$\Lambda(\bx, \bg)\succeq 0$ for all $\bx\in\X\cap\RR_+^n$
if and only if
\[
-2+\left(-\frac{\gamma_1}{2}+\frac{3\gamma_2}{2}+3\right)x_2+\left(\frac{3\gamma_1}{2}-\frac{\gamma_2}{2}+3\right)x_1\ge 0
\ \text{ and }\ 
-2-\gamma_1 x_2-\gamma_2 x_1\ge 0
\]
on $\X$ if and only if 
\[
\frac{\gamma_1}{2}-\frac{3\gamma_2}{2}-3\le 0,\ -\frac{3\gamma_1}{2}+\frac{\gamma_2}{2}-3\le 0,\ 
\phi_1(\bg)\coloneqq\left(\frac{\gamma_1}{2}-\frac{3\gamma_2}{2}-3\right)\left(-\frac{3\gamma_1}{2}+\frac{\gamma_2}{2}-3\right)-1\ge 0,
\]
and
\[
\gamma_1\le 0,\ \gamma_2\le 0,\ \phi_2(\bg)\coloneqq\gamma_1\gamma_2-1\ge 0.
\]
The feasible set $\mathcal{F}$ is shown in light blue in Figure \ref{fig::2}.
Since $\X=\X\cap\RR^2_+$, we can apply both SDP relaxations \eqref{eq::sdp4nc} and \eqref{eq::sdp4no}
to this problem and compare the behaviors of $\tau_{\varepsilon}(k_{\bx},k_{\by})$ and 
$\td{\tau}_{\varepsilon}(k_{\bx},k_{\by})$. 

Consider the objective function $\bc^\intercal\bg=\gamma_1-\gamma_2$.
One can check that the optimal solution is one of the solutions of $\phi_1(\bg)=\phi_2(\bg)=0$:
\[
\bg^{\star}=\left(1-\sqrt{5}-\sqrt{5-2\sqrt{5}}, 1-\sqrt{5}+\sqrt{5-2\sqrt{5}}\right)\approx (-1.9626,-0.5095),
\]
and the optimal value $\tau^{\star}\approx -1.4531$. The computational results for 
$\tau_{\varepsilon}(k_{\bx}, k_{\by})$ and $\td{\tau}_{\varepsilon}(k_{\bx}, k_{\by})$ 
with different $\varepsilon$ and $(k_{\bx}, k_{\by})$ are presented in 
Table~\ref{tab::2}. As expected, for a given $\varepsilon>0$, both $\tau_{\varepsilon}(k_{\bx}, k_{\by})$ and 
$\td{\tau}_{\varepsilon}(k_{\bx}, k_{\by})$ provide lower bounds for the optimal value $\tau^{\star}$ 
when $k_{\bx}$ and $k_{\by}$ are sufficiently large 
(except for $\tau_{0.01}(2, 15)$ and $\td{\tau}_{0.01}(2, 15)$ which requires larger
$k_{\bx}$ and $k_{\by}$ as $\varepsilon=0.01$ is small). 
Moreover, for a fixed $\varepsilon>0$, 
both quantities are non-increasing as $k_{\bx}$ and $k_{\by}$ increase, and for fixed $k_{\bx}$ and $k_{\by}$, 
they are non-decreasing as $\varepsilon$ decreases. Compared with $\tau_{\varepsilon}(k_{\bx}, k_{\by})$,
the values $\td{\tau}_{\varepsilon}(k_{\bx}, k_{\by})$ with the same $(\varepsilon,k_{\bx},k_{\by})$ 
are smaller and can be computed in  much less time.

To illustrate the behavior of optimal solutions of \eqref{eq::sdp4nc} and \eqref{eq::sdp4no} under 
varying $\varepsilon$ and $(k_{\bx}, k_{\by})$, like in Example~\ref{ex::1}, we solve \eqref{eq::sdp4nc} and 
\eqref{eq::sdp4no} with the objective function $\gamma_1\cos\theta + \gamma_2\sin\theta$ for $40$ evenly spaced 
values of $\theta$ over $[0, 2\pi]$, and extract the corresponding optimal solutions. 
The feasible sets of \eqref{eq::sdp4nc} and \eqref{eq::sdp4no}, which serve as approximations of $\mathcal{F}$, 
are then outlined by connecting these optimal solutions and shown in Figure~\ref{fig::2}. As one can see,
the behavior of these optimal points aligns with the quantities $\tau_{\varepsilon}(k_{\bx}, k_{\by})$ and 
$\td{\tau}_{\varepsilon}(k_{\bx}, k_{\by})$, confirming the theoretical expectations discussed above.
In particular, it is evident from the figure that the feasible set of \eqref{eq::sdp4no} contains that of \eqref{eq::sdp4nc} with the same $(\varepsilon,k_{\bx},k_{\by})$, which corresponds to the fact that $\td{\tau}_{\varepsilon}(k_{\bx}, k_{\by})$ is 
smaller than $\tau_{\varepsilon}(k_{\bx}, k_{\by})$.
\qed

\begin{table}[htb]\caption{Computational results for Example \ref{ex::nc}.}{\small
	\centering
	\begin{tabular}{cccccccccc}
\midrule[0.8pt]
\multirow{2}{*}{$\varepsilon$} &&
\multicolumn{2}{c}{$(k_{\bx},k_{\by}$)=(2, 5)} & &\multicolumn{2}{c}{$(k_{\bx},k_{\by}$)=(2, 10)} 
& &\multicolumn{2}{c}{$(k_{\bx},k_{\by}$)=(2, 15)}\\
		\cline{3-4} \cline{6-7} \cline{9-10} 
&&
$\tau(k_{\bx}, k_{\by})$/time & $\td{\tau}(k_{\bx}, k_{\by})$/time
& &
$\tau(k_{\bx}, k_{\by})$/time & $\td{\tau}(k_{\bx}, k_{\by})$/time
& &
$\tau(k_{\bx}, k_{\by})$/time & $\td{\tau}(k_{\bx}, k_{\by})$/time\\
\midrule[0.4pt]
		$0.1$ && -1.3716/2.20s & -1.4922/1.69s && -1.4844/32.3s & -1.6587/7.32s && -1.5650/547s & -1.6837/27.2s \\
        $0.05$ && -1.2664/2.34s & -1.3163/1.93s && -1.3698/31.1s & -1.4899/7.21s && -1.4552/507s & -1.5230/29.9s \\
        $0.01$ && -1.1873/2.57s & -1.1806/2.17s && -1.2949/33.3s & -1.3625/7.86s && -1.3721/451s & -1.3900/35.7s \\
\midrule[0.8pt]
	\end{tabular}
	
		\label{tab::2}
	}
\end{table}

\begin{figure}
\centering
\scalebox{0.7}{
\includegraphics[trim=60 195 60 200,clip]{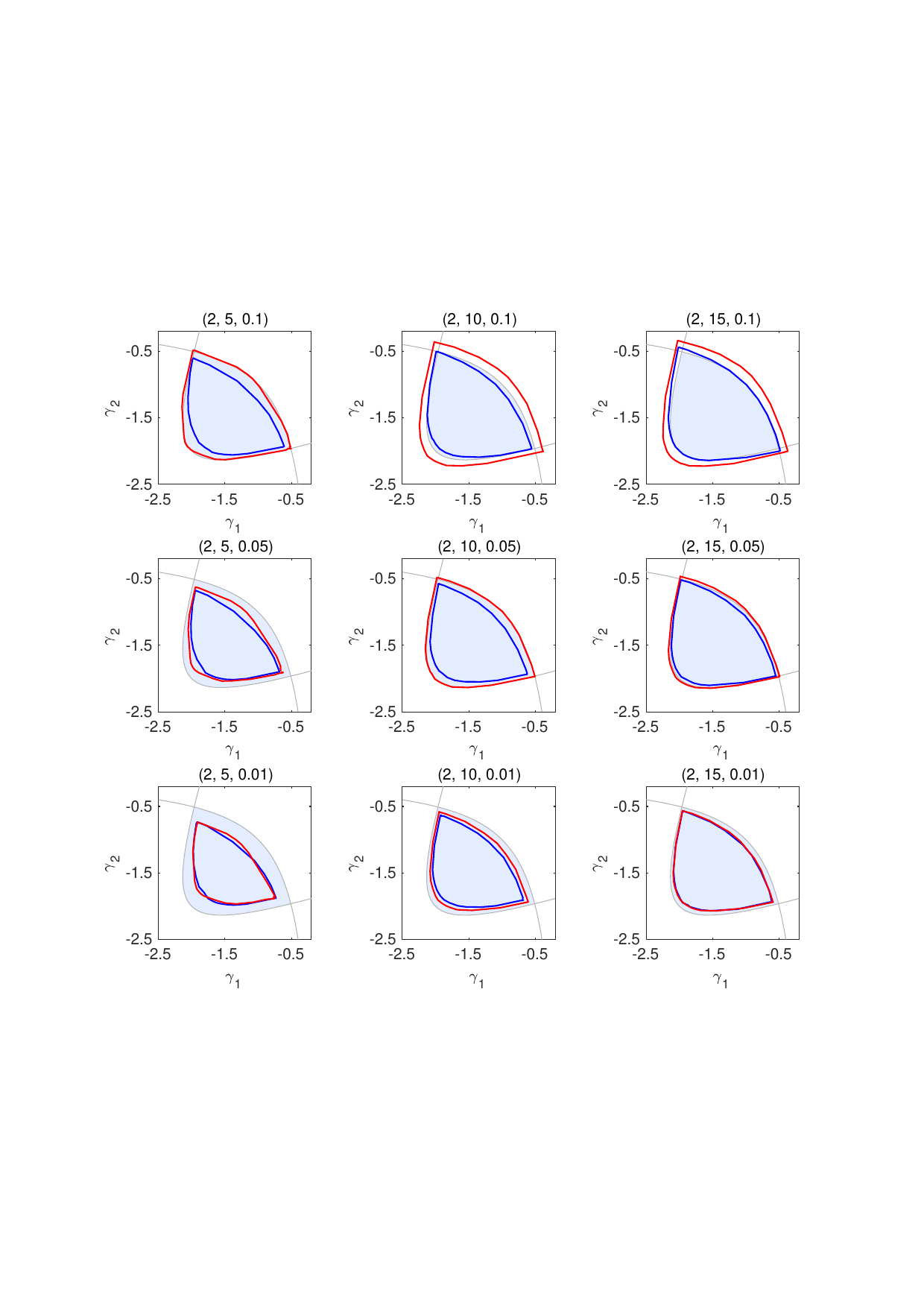}}
\caption{The feasible set (in light blue) of the problem in Example \ref{ex::nc} and the feasible sets of its SDP relaxations \eqref{eq::sdp4nc} (outlined with blue line) and \eqref{eq::sdp4no} (outlined with red line) with the same values $(\varepsilon,k_{\bx},k_{\by})$.}\label{fig::2}
\end{figure}
}
\end{example}

\section{Proofs}\label{sec::proofs}

\subsection{Proof of Proposition \ref{prop::main}}\label{sec::prfPropmain}

For a matrix $N\in\bS^q$, denote by $\lambda_{\max}(N)$ and 
$\lambda_{\min}(N)$ the largest and smallest eigenvalues of 
$N$, respectively.
\begin{proof}[Proof of Proposition \ref{prop::main}]
    For each $j\in[s]$, let 
\[ 
M\coloneqq\max\left\{|\lambda_{\max} (G_j(\bx, \by))|, |\lambda_{\min} (G_j(\bx, \by))| \colon \bx\in
   [-C, C]^n,\ \by\in\Y,\ j\in[s]\right\}. 
   \] 
   As $\Y$ is compact, the quantity $M$ is well-defined and positive. 
   For this $M$, we show that there exists some $\bar{k}\in\N$ such that \eqref{eq::ge} holds for all $k\ge\bar{k}$. Suppose on
   the contrary that for any $k\in\N$, there exists $\bx^{(k)}\in[-C,
   C]^n$ such that 
\begin{equation}\label{eq::np}
   F(\bx^{(k)})-\sum_{j=1}^s\int_{\Y}\lram{I_p\otimes \left(I_{q_j}-G_j(\bx^{(k)},
   \by)/M\right)^{2k}}{G_j(\bx^{(k)}, \by)}{p}\ud\nu(\by)\not\succ 0.
    \end{equation}
   As $[-C, C]^n$ is compact, without loss of generality, we may assume that
   $\lim_{k\to\infty}\bx^{(k)}=\bx^{\star}$ for some $\bx^{\star}\in[-C, C]^n$.
Next, we claim that
   there exists $k'\in\N$ and a neighborhood $\mathcal{O}_1$ of $\bx^{\star}$ such that \eqref{eq::ge}
   holds for all $\bx\in\mathcal{O}_1\cap[-C, C]^n$ and $k\ge k'$,
   which yields a contradiction.

   We first consider the case that $\bx^{\star}\in\widehat{\X}$. As $F(\bx)\succ 0$ on $\widehat{\X}$, 
   there exists $\varepsilon>0$
   and a neighborhood $\mathcal{O}_1$ of $\bx^{\star}$ such that $F(\bx)\succeq\varepsilon I_p$ 
   for all $\bx\in\mathcal{O}_1$.  
   We now prove that there exsits $k'\in\N$ such that for all $k\ge k'$, it holds
    \[\sum_{j=1}^s\int_{\Y}\left\langle\left(I_{q_j}-G_j(\bx,
   \by)/M\right)^{2k}, G_j(\bx, \by)\right\rangle \ud\nu(\by)\le \frac{\varepsilon}{2},
    \]
    for all $\bx\in[-C, C]^n$. 
    Fix a pair of $\bu\in[-C, C]^n$ and $\bw\in\Y$. 
    Let $\{\lambda^{(j)}_i\}_{i\in[q_j]}$ be the set of eigenvalues of $G_j(\bu, \bw)$. 
    Take the decomposition $G_j(\bu, \bw)=Q_jD_jQ_j^{\intercal}$ where $D_j=\diag{\lambda_1^{(j)},\ldots,\lambda^{(j)}_{q_j}}$
    and $Q_j\in\RR^{q_j\times q_j}$ with $Q_j^{\intercal}Q_j=Q_jQ_j^{\intercal}=I_{q_j}$. Then, 
    \begin{equation}\label{eq::ineq}
    \begin{aligned}
    \left\langle\left(I_{q_j}-G_j(\bu, \bw)/M\right)^{2k}, G_j(\bu, \bw)\right\rangle
    =&\left\langle\left(I_{q_j}-Q_j(D_j/M)Q_j^{\intercal}\right)^{2k}, Q_jD_jQ_j^{\intercal}\right\rangle\\
    =&\left\langle Q_j\left(I_{q_j}-D_j/M\right)^{2k}Q_j^{\intercal}, Q_jD_jQ_j^{\intercal}\right\rangle
    =M\sum_{i=1}^{q_j} \frac{\lambda_i^{(j)}}{M}\left(1-\frac{\lambda_i^{(j)}}{M}\right)^{2k}
    \end{aligned}.
    \end{equation}
Let $q'\coloneqq\max_{ j\in[s]}q_j$. Note that there exists $k'\in\N$ such that for all $k\ge k'$,
    \[
    \max\left\{\xi(1-\xi)^{2k} : \xi\in[0, 1]\right\}\le \frac{\varepsilon}{2q'sM\nu(\Y)}.
    \]
    Hence, for any pair of $\bx\in[-C, C]^n$ and $\by\in\Y$, 
    from \eqref{eq::ineq}
    we obtain that \begin{equation}\label{eq::epsi}
    \left\langle\left(I_{q_j}-G_j(\bx, \by)/M\right)^{2k}, G_j(\bx, \by)\right\rangle
    \le 
   Mq_j \frac{\varepsilon}{2q'sM\nu(\Y)}\le \frac{\varepsilon}{2s\nu(\Y)}
    \end{equation}
    for all $k\ge k'$, where $k'$ does not depends on the choice of $\bx\in[-C, C]^n$ and $\by\in\Y$.
    Therefore, for all $\bx\in[-C, C]^n$ and $k\ge k'$, we have 
    \[
   \sum_{j=1}^s\int_{\Y}\left\langle\left(I_{q_j}-G_j(\bx,
   \by)/M\right)^{2k}, G_j(\bx, \by)\right\rangle \ud\nu(\by)
   \le \sum_{j=1}^s\int_{\Y}\frac{\varepsilon}{2s\nu(\Y)}\ud\nu(\by)\le \frac{\varepsilon}{2}.
    \]
    Then, for all $\bx\in\mathcal{O}_1\cap[-C, C]^n$ and $k\ge k'$, 
    \[
    F(\bx)-\sum_{j=1}^s\int_{\Y}\lram{I_p\otimes \left(I_{q_j}-G_j(\bx,
   \by)/M\right)^{2k}}{G_j(\bx, \by)}{p}\ud\nu(\by)\succeq \varepsilon I_p-\frac{\varepsilon}{2}I_p=
   \frac{\varepsilon}{2} I_p\succ 0.
    \]
    
    Next, we consider the case that $\bx^{\star}\in[-C,C]^n\setminus\widehat{\X}$. There exists $j_0\in[s]$ 
    and $\by^{(0)}\in\Y$ such that $\lambda_{\min}(G_{j_0}(\bx^{\star}, \by^{(0)})<0$.
    By continuity, for some $\bar{\lambda}<0$, 
    there exists a neighborhood $\mathcal{O}_1$ (resp., $\mathcal{O}_2$) 
    of $\bx^{\star}$ (resp., $\by^{(0)}$) such that $\lambda_{\min}(G_{j_0}(\bx, \by))\le \bar{\lambda}$
    for all $\bx\in\mathcal{O}_1$ and $\by\in\mathcal{O}_2$.
    For any pair of $\bx\in\mathcal{O}_1\cap [-C, C]^n$ and $\by\in\mathcal{O}_2\cap\Y$,  
    letting $\lambda_{\min}^{(j_0)}\coloneqq \lambda_{\min}(G_{j_0}(\bx, \by))$,
    from \eqref{eq::ineq} we obtain that
    \begin{equation}\label{eq::lambda}
    \begin{aligned}
    \left\langle\left(I_{q_{j_0}}-G_{j_0}(\bx, \by)/M\right)^{2k}, G_{j_0}(\bx, \by)\right\rangle &\le
    \sum_{\lambda_i^{(j_0)}>0} \lambda_i^{(j_0)}+
    \sum_{\lambda_i^{(j_0)}<0} \lambda_i^{(j_0)}\left(1-\frac{\lambda_i^{(j_0)}}{M}\right)^{2k}\\
    &\le Mq'+\lambda_{\min}^{(j_0)}\left(1-\frac{\lambda_{\min}^{(j_0)}}{M}\right)^{2k}
    \le Mq'+\bar{\lambda}\left(1-\frac{\bar{\lambda}}{M}\right)^{2k}.\\
    \end{aligned}
    \end{equation}
    Since $\supp(\nu)=\Y$, we have $\nu(\Y\cap\mathcal{O}_2)>0$. 
    Note that for each $j\in[s]$, by \eqref{eq::epsi}, the maximal eigenvalue of 
    \[
    \lram{I_p\otimes \left(I_{q_j}-G_j(\bx, \by)/M\right)^{2k}}{G_j(\bx, \by)}{p}
   =\left\langle\left(I_{q_j}-G_j(\bx, \by)/M\right)^{2k}, G_j(\bx, \by)\right\rangle I_p
    \]
    could be uniformly bounded from above on $[-C, C]^n\times \Y$ for all $k\in\N$. Therefore, 
    as $[-C, C]^n$ and $\Y$ are compact, there exists $R\in\RR$ such that
    \begin{equation}\label{eq::F}
    \begin{aligned}
   &F(\bx)-\sum_{j\neq j_0}\int_{\Y}\lram{I_p\otimes \left(I_{q_j}-G_j(\bx,
   \by)/M\right)^{2k}}{G_j(\bx, \by)}{p}\ud\nu(\by) \\
   &-\int_{\Y\setminus\mathcal{O}_2}\lram{I_p\otimes \left(I_{q_{j_0}}-G_{j_0}(\bx,
   \by)/M\right)^{2k}}{G_{j_0}(\bx, \by)}{p}\ud\nu(\by) 
   -(Mq'\nu(\Y\cap\mathcal{O}_2)+1)I_p\succeq  RI_p 
    \end{aligned}
    \end{equation}
    for all $\bx\in[-C, C]^n$. Since $1-\bar{\lambda}/M>1$, there exists $k'\in\N$ such that 
   \begin{equation}\label{eq::barlam}
       \bar{\lambda}\left(1-\frac{\bar{\lambda}}{M}\right)^{2k}<\frac{R}{\nu(\Y\cap\mathcal{O}_2)}
   \end{equation}
   holds for all $k\ge k'$. Then, for all $\bx\in\mathcal{O}_1\cap[-C, C]^n$ and $k\ge k'$, 
    \begin{align*}
   &F(\bx)-\sum_{j=1}^s\int_{\Y}\lram{I_p\otimes \left(I_{q_j}-G_j(\bx,
   \by)/M\right)^{2k}}{G_j(\bx, \by)}{p}\ud\nu(\by) \\
   \succeq & -\int_{\Y\cap\mathcal{O}_2}\lram{I_p\otimes \left(I_{q_{j_0}}-G_{j_0}(\bx,
   \by)/M\right)^{2k}}{G_{j_0}(\bx, \by)}{p}\ud\nu(\by) 
   +(Mq'\nu(\Y\cap\mathcal{O}_2)+1+R)I_p \tag{by \eqref{eq::F}}\\
   \succeq & \left(-\int_{\Y\cap\mathcal{O}_2}\left(Mq'+\bar{\lambda}\left(1-\frac{\bar{\lambda}}{M}\right)^{2k}\right)\ud\nu(\by) 
   +Mq'\nu(\Y\cap\mathcal{O}_2)+1+R\right)I_p \tag{by \eqref{eq::lambda}}\\
   \succeq& \left(-\left(Mq'+\frac{R}{\nu(\Y\cap\mathcal{O}_2)}\right)\nu(\Y\cap\mathcal{O}_2)
   +Mq'\nu(\Y\cap\mathcal{O}_2)+1+R\right)I_p=I_p, \tag{by \eqref{eq::barlam}}
    \end{align*}
    which validates the claim and completes the proof.
\end{proof}

Using Proposition \ref{prop::main}, we can recover Theorem \ref{th::SHpsatz} in the case that $\Y$ is compact.

\begin{lemma}\label{lem::S}
   For any $h\in\QM^1(\bG, \nu)$ and SOS matrix $S(\bx)\in\bS[\bx]^p$, 
   we have $S(\bx)h(\bx)\in\QM^p(\bG, \nu)$. 
\end{lemma}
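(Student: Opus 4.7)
The plan is to unwind both sides from Definition~\ref{def::MQM} and the definition of $\langle\cdot,\cdot\rangle_p$ in Section~\ref{subsec::pre}, and then reduce the claim to two elementary SOS-factorization facts. First I would write a generic $h\in\QM^1(\bG,\nu)$ in its canonical form
\[
h(\bx) = \sigma_0(\bx) + \int_{\Y}\langle \sigma(\bx,\by), G(\bx,\by)\rangle \ud\nu(\by),
\]
with $\sigma_0\in\RR[\bx]$ an SOS polynomial and $\sigma\in\bS[\bx,\by]^q$ an SOS matrix, so that $S(\bx)h(\bx)$ splits as $S\sigma_0+\int_{\Y}S\,\langle \sigma,G\rangle\,\ud\nu(\by)$. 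The outside term is straightforward: writing $S=T^{\intercal}T$ and $\sigma_0=\sum_i f_i^2$, I would observe that $S\sigma_0=\sum_i(f_iT)^{\intercal}(f_iT)$ is an SOS matrix in $\bS[\bx]^p$, providing the $\Sigma_0$-part of the membership.

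The core of the plan is to rewrite the integrand in the form $\langle\Sigma,G\rangle_p$ with $\Sigma\in\bS[\bx,\by]^{pq}$ an SOS matrix. I would try the natural candidate $\Sigma\coloneqq S(\bx)\otimes \sigma(\bx,\by)$ and verify two things. First, the identity
\[
S(\bx)\,\langle \sigma(\bx,\by),G(\bx,\by)\rangle \;=\; \langle S(\bx)\otimes \sigma(\bx,\by),\, G(\bx,\by)\rangle_p
\]
is checked blockwise: the $(i,j)$-block of $S\otimes\sigma$ is the scalar multiple $S_{ij}\sigma$, so the definition of $\trp{\,\cdot\,}$ gives that the $(i,j)$-entry of $\langle S\otimes\sigma, G\rangle_p$ equals $S_{ij}\tr(\sigma^{\intercal}G)=S_{ij}\langle\sigma,G\rangle$, using the symmetry of $S$ and $\sigma$. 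Second, the Kronecker factorization $S\otimes\sigma=(T\otimes U)^{\intercal}(T\otimes U)$, obtained from the mixed-product property together with $S=T^{\intercal}T$ and $\sigma=U^{\intercal}U$, shows that $\Sigma$ is indeed an SOS matrix.

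Putting the two pieces together yields a representation
\[
S(\bx)h(\bx)=S(\bx)\sigma_0(\bx)+\int_{\Y}\langle S(\bx)\otimes\sigma(\bx,\by),G(\bx,\by)\rangle_p\,\ud\nu(\by),
\]
which is exactly a witness of membership in $\QM^p(\bG,\nu)$. The only delicate point is the bookkeeping in the identity relating $\langle\cdot,\cdot\rangle$ and $\langle\cdot,\cdot\rangle_p$ via Kronecker products; no deeper ingredient is needed. If $\bG$ is interpreted as a tuple $\{G_1,\dots,G_s\}$, so that $h$ decomposes as a sum of integrals, the same argument is applied term by term to obtain the desired representation.
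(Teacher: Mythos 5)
Your proof is correct and takes essentially the same route as the paper: both write $h$ in its canonical form, observe that $S\sigma_0$ is an SOS matrix, and use the identity $S\langle\sigma,G\rangle=\langle S\otimes\sigma,G\rangle_p$ together with the Kronecker-product factorization to show $S\otimes\sigma$ is SOS. The only difference is that you spell out the verifications the paper leaves implicit.
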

\begin{proof}
    Write $h=\sigma_0+\sum_{j=1}^s\int_{\Y}\langle S_j, G_j\rangle\ud\nu(\by)$ where
all $\sigma_0\in\RR[\bx]$ and $S_j\in\bS[\bx, \by]^{q_j}$ are SOS. Then
\[
S h=S\sigma_0+\sum_{j=1}^s S\int_{\Y}\langle S_j, G_j\rangle\ud\nu(\by)
=S\sigma_0+\sum_{j=1}^s \int_{\Y}\lram{S\otimes S_j}{G_j}{p}\ud\nu(\by).
\]
As $S\sigma_0\in\bS[\bx]^{p}$ and $S\otimes S_j\in\bS[\bx, \by]^{pq_j}$ are all SOS
matrices, we have $Sh\in\QM^p(\bG, \nu)$.
\end{proof}

\begin{corollary}\label{cor::MPU}
Suppose that $\Y$ is compact and Assumption \ref{assump2} holds.
If $F(\bx)\in\bS[\bx]^p$ is PD on $\widehat{\X}$, then $F\in\QM^p(\bG, \nu)$.
\end{corollary}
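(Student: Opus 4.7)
The plan is to combine the approximation result of Proposition~\ref{prop::main} with the matrix Scherer--Hol Positivstellensatz (Theorem~\ref{th::psatz}) and Lemma~\ref{lem::S}, using Assumption~\ref{assump2} to bring us onto a compact box where Proposition~\ref{prop::main} applies.

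First I would use the Archimedean condition to choose a constant $C>0$ such that $C-\Vert\bx\Vert^2\in\QM^1(\bG,\nu)$. Then $\widehat{\X}\cap\{\bx:\Vert\bx\Vert^2\le C\}$ is not quite what is needed; rather, what matters is that the closed ball $\overline{B}(0,\sqrt{C})=\{\bx:C-\Vert\bx\Vert^2\ge 0\}$ is contained in $[-\sqrt{C},\sqrt{C}]^n$. Since $F$ is PD on $\widehat{\X}$, I apply Proposition~\ref{prop::main} with the constant $\sqrt{C}$ to obtain $M>0$ and $\bar{k}\in\N$ such that, for every $k\ge\bar{k}$,
\[
F_1(\bx)\coloneqq F(\bx)-\sum_{j=1}^s\int_{\Y}\lram{I_p\otimes\bigl(I_{q_j}-G_j(\bx,\by)/M\bigr)^{2k}}{G_j(\bx,\by)}{p}\ud\nu(\by)\succ 0
\]
on $[-\sqrt{C},\sqrt{C}]^n$, and in particular on the semialgebraic set $\{\bx\in\RR^n\mid C-\Vert\bx\Vert^2\ge 0\}$.

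Next, because $\QM^p(\{C-\Vert\bx\Vert^2\})$ is trivially Archimedean, Scherer--Hol's Positivstellensatz (Theorem~\ref{th::psatz}) applied to $F_1$ yields SOS matrices $\Sigma_0,\Sigma_1\in\bS[\bx]^p$ with
\[
F_1(\bx)=\Sigma_0(\bx)+(C-\Vert\bx\Vert^2)\,\Sigma_1(\bx).
\]
Since $C-\Vert\bx\Vert^2\in\QM^1(\bG,\nu)$ and $\Sigma_1$ is an SOS matrix, Lemma~\ref{lem::S} gives $(C-\Vert\bx\Vert^2)\Sigma_1\in\QM^p(\bG,\nu)$; together with $\Sigma_0\in\bS[\bx]^p$ (a trivial element of $\QM^p(\bG,\nu)$), this places $F_1$ in $\QM^p(\bG,\nu)$.

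Finally, observe that each matrix $I_p\otimes(I_{q_j}-G_j/M)^{2k}\in\bS[\bx,\by]^{pq_j}$ is itself an SOS matrix (a Kronecker product of an SOS matrix in $\by$-free variables with an even power of a symmetric matrix), so the subtracted sum in the definition of $F_1$ already lies in $\QM^p(\bG,\nu)$ by definition of that module. Adding the two pieces gives $F=F_1+(F-F_1)\in\QM^p(\bG,\nu)$, as desired. The only mildly delicate step is the verification that $I_p\otimes(I_{q_j}-G_j/M)^{2k}$ is an SOS matrix in $(\bx,\by)$; this is where I expect the bookkeeping to concentrate, but it is standard since $(I_{q_j}-G_j/M)^{2k}=\bigl((I_{q_j}-G_j/M)^k\bigr)^{\!\intercal}(I_{q_j}-G_j/M)^k$ and tensoring with $I_p$ preserves the SOS property.
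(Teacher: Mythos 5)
Your proposal is correct and follows essentially the same route as the paper's proof: apply Proposition~\ref{prop::main} on a box containing the ball $\{\bx : \Vert\bx\Vert^2\le C\}$, invoke Scherer--Hol (Theorem~\ref{th::psatz}) for the single constraint $C-\Vert\bx\Vert^2\ge 0$, absorb the term $(C-\Vert\bx\Vert^2)\Sigma_1$ via Lemma~\ref{lem::S}, and note that the subtracted integral terms lie in $\QM^p(\bG,\nu)$ because each $I_p\otimes(I_{q_j}-G_j/M)^{2k}$ is an SOS matrix. Your explicit verification of that last SOS fact, and the care with $\sqrt{C}$ versus $C$ when passing from the ball to the box, are details the paper leaves implicit but do not change the argument.
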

\begin{proof}
   Let $C>0$ be given in Assumption \ref{assump2}. Then, by Proposition \ref{prop::main},
   there exists $M>0$ and $\bar{k}\in\N$ such that 
   \[
   F(\bx)-\sum_{j=1}^s\int_{\Y}\lram{I_p\otimes \left(I_{q_j}-G_j(\bx,
   \by)/M\right)^{2k}}{G_j(\bx, \by)}{p}\ud\nu(\by)\succ 0 
   \]
   holds on the set $\{\bx\in\RR^n \mid \Vert\bx\Vert^2\le C\}$ for all $k\ge \bar{k}$. 
   By Scherer-Hol's Positivestellensatz (Theorem \ref{th::psatz}), there exist SOS matrices 
   $S_0, S_1\in\bS[\bx]^p$ such that 
   \[
   F(\bx)-\sum_{j=1}^s\int_{\Y}\lram{I_p\otimes \left(I_{q_j}-G_j(\bx,
   \by)/M\right)^{2k}}{G_j(\bx, \by)}{p}\ud\nu(\by)=S_0(\bx)+S_1(\bx)(C-\Vert\bx\Vert^2).
   \]
   By Lemma \ref{lem::S}, $S_1(\bx)(C-\Vert\bx\Vert^2)\in\QM^p(\bG, \nu)$ which implies that
   $F\in\QM^p(\bG, \nu)$.
\end{proof}

\subsection{Proof of Theorem \ref{th::homo}}\label{sec::prfThmhom}
\begin{proof}[Proof of Theorem \ref{th::homo}]
Let 
\[
\wt{\X}\coloneqq\left\{\bx\in\RR^n \mid 1-\Vert\bx\Vert^2=0,\ 
G(\bx, \by)\succeq 0, \ \forall \by\in \Y\right\}.
\]
Then $F(\bu)\succ 0$ for all $\bu\in\wt{\X}$. 
By Corollary \ref{cor::pos}, it holds that  
\[
	F(\bx)=S'_0(\bx)+\int_{\Y}\lram{S'(\bx,\by)}{G(\bx, \by)}{p}
 \ud\nu(\by)+H(\bx)(1-\Vert\bx\Vert^2),
 \]
where $S'_0\in\bS[\bx]^{p}$, $S'\in\bS[\bx, \by]^{pq}$ are SOS  matrices and
$H(\bx)\in\bS[\bx]^p$.
Replacing $\bx$ by $\bx/\Vert\bx\Vert$ in the above equality yields 
\[
F(\bx)\Vert\bx\Vert^{-\deg{F}}
 =S'_0\left(\frac{\bx}{\Vert\bx\Vert}\right)
 +\int_{\Y}\lram{S'\left(\frac{\bx}{\Vert\bx\Vert}, \by\right)}{G\left(\frac{\bx}{\Vert\bx\Vert}, \by\right)}{p}
 \ud\nu(\by).
\]
Let 
\[
k'\coloneqq\max\left\{\deg{F},\ \deg S'_0,\ \deg_{\bx}{G}+\deg_{\bx}S'\right\}.
\]
By assumption, $k'$ is even.
Multiplying the two sides of the last equality with $\Vert\bx\Vert^{k'}$ gives
\begin{equation}\label{eq::h}
F(\bx)\Vert\bx\Vert^{k'-\deg{F}}
 =\overline{S}_0\left(\bx\right)
 +\int_{\Y}\lram{\overline{S}\left(\bx, \by\right)}{G\left(\bx, \by\right)}{p}
 \ud\nu(\by),
\end{equation}
where
\[
\overline{S}_0\coloneqq S'_0\left(\frac{\bx}{\Vert\bx\Vert}\right)\Vert\bx\Vert^{k'}\ \
\text{and}\ \
\overline{S}\coloneqq S'\left(\frac{\bx}{\Vert\bx\Vert}, \by\right)\Vert\bx\Vert^{k'-\deg_{\bx}{G}}.
\]
Since $S'$ is an SOS matrix and $k'-\deg_{\bx}{G}\ge\deg_{\bx}S'$, 
we have $\overline{S}=H^{\intercal}H$ with
$H=H_{1}+H_{2}\Vert\bx\Vert$ where $H_{1}, H_{2}\in\RR[\bx,\by]^{\ell\times pq}$ for some $\ell\in\N$, 
are homogeneous in $\bx$ of degree $(k'-\deg_{\bx}{G})/2$ and $(k'-\deg_{\bx}{G})/2-1$, respectively. Thus,
\[
\begin{aligned}
\overline{S}=H^{\intercal}H=(H_{1}+H_{2}\Vert\bx\Vert)^\intercal
(H_{1}+H_{2}\Vert\bx\Vert)
=\left(H_{1}^\intercal H_{1}+H_{2}^\intercal H_{2}\Vert\bx\Vert^2\right)+
\left(H_{1}^\intercal H_{2}+H_{2}^\intercal H_{1}\right)\Vert\bx\Vert.
\end{aligned}
\]
Similarly, there exist homogeneous polynomial matrices $H_{0,1}\in\RR[x]^{\ell_0\times p}_{k'/2}$ 
and $H_{0,2}\in\RR[\bx]^{\ell_0\times p}_{k'/2-1}$ for some $\ell_0\in\N$ such that 
\[
\overline{S}_0=\left(H_{0,1}^\intercal H_{0,1}+H_{0,2}^\intercal H_{0,2}\Vert\bx\Vert^2\right)+
\left(H_{0,1}^\intercal H_{0,2}+H_{0,2}^\intercal H_{0,1}\right)\Vert\bx\Vert.
\]
Then, by \eqref{eq::h}, it holds that
\[
\begin{aligned}
F(\bx)\Vert\bx\Vert^{k'-\deg{F}}
=&\left(H_{0,1}^\intercal H_{0,1}+H_{0,2}^\intercal H_{0,2}\Vert\bx\Vert^2\right)
 +\int_{\Y}\lram{\left(H_{1}^\intercal H_{1}+H_{2}^\intercal H_{2}\Vert\bx\Vert^2\right)}{G\left(\bx, \by\right)}{p} \ud\nu(\by)\\
&+\left(\left(H_{0,1}^\intercal H_{0,2}+H_{0,2}^\intercal H_{0,1}\right)
 +\int_{\Y}\lram{\left(H_{1}^\intercal H_{2}+H_{2}^\intercal H_{1}\right)}{G\left(\bx, \by\right)}{p} \ud\nu(\by)\right)\Vert\bx\Vert.
\end{aligned}
\]
Since $\Vert\bx\Vert$ is not a polynomial and the left hand side of the above equation 
is a polynomial matrix,  we must have 
\[
\left(H_{0,1}^\intercal H_{0,2}+H_{0,2}^\intercal H_{0,1}\right)
 +\int_{\Y}\lram{\left(H_{1}^\intercal H_{2}+H_{2}^\intercal H_{1}\right)}{G\left(\bx, \by\right)}{p} \ud\nu(\by)=0.
\]
Then, letting $N\coloneqq(k'-\deg{F})/2\in\N$, 
$S_0\coloneqq H_{0,1}^\intercal H_{0,1}+H_{0,2}^\intercal H_{0,2}\Vert\bx\Vert^2$ 
and $S\coloneqq H_{1}^\intercal H_{1}+H_{2}^\intercal H_{2}\Vert\bx\Vert^2$,
we obtain the desired conclusion.
\end{proof}

\subsection{Proof of Theorem \ref{th::PolyaMPU}}\label{sec::prfThPolya}
Let $\bz=(z_1,\ldots,z_n)$ and $\bz\circ\bz=(z_1^2,\ldots,z_n^2)$.
We first derive an intermediary result.
\begin{theorem}\label{th::homo2}
Suppose that Assumption \ref{assump::carleman} holds, 
$F\in\bS[\bx]^p$ and $G\in\bS[\bx, \by]^{q}$ are homogeneous in $\bx$, and 
$F(\bx)\succ 0$ for all $\bx\in\RR^n_+\cap\X\setminus\{\mathbf{0}\}$. Then, there exists $N\in\N$ and 
polynomial matrices $S_0\in\bS[\bx]^p$, $S\in\bS[\bx,\by]^{pq}$ which 
are homogeneous in $\bx$ ($\deg S_0=N+\deg F$ and $\deg_{\bx}S=N+\deg F-\deg_{\bx}G$), and satisfy that $S_0(\bz\circ\bz)$ (resp., $S(\bz\circ\bz, \by)$)
are SOS matrices in $\bz$ (resp., $\bz$ and $\by$), such that 
\[
(\be^\intercal\bx)^{N}F(\bx)=S_0(\bx)+\int_{\Y}\lram{S(\bx, \by)}{G(\bx, \by)}{p}
 \ud\nu(\by).
 \]
\end{theorem}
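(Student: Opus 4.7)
The plan is to reduce the theorem to the non-compact homogeneous case treated in Theorem \ref{th::homo} by the Dickinson--Povh substitution $\bx=\bz\circ\bz$. First I would set $\widetilde{F}(\bz)\coloneqq F(\bz\circ\bz)$ and $\widetilde{G}(\bz,\by)\coloneqq G(\bz\circ\bz,\by)$. Because $F$ and $G$ are homogeneous in $\bx$, the matrices $\widetilde{F}$ and $\widetilde{G}$ are homogeneous in $\bz$ of even degrees $2\deg F$ and $2\deg_{\bx}G$, respectively. The associated set $\widetilde{\X}\coloneqq\{\bz\in\RR^n : \widetilde{G}(\bz,\by)\succeq 0,\ \forall \by\in\Y\}$ is precisely the preimage of $\RR^n_+\cap\X$ under the componentwise squaring map $\bz\mapsto\bz\circ\bz$, so the hypothesis $F(\bx)\succ 0$ on $\RR^n_+\cap\X\setminus\{\mathbf{0}\}$ translates to $\widetilde{F}(\bz)\succ 0$ on $\widetilde{\X}\setminus\{\mathbf{0}\}$.

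Next I would apply Theorem \ref{th::homo} to $\widetilde{F}$ and $\widetilde{G}$ (Assumption \ref{assump::carleman} is inherited) to obtain some $N\in\N$ and SOS matrices $\widetilde{\Sigma}_0\in\bS[\bz]^p$, $\widetilde{\Sigma}\in\bS[\bz,\by]^{pq}$, homogeneous in $\bz$ of degrees $2N+2\deg F$ and $2N+2\deg F-2\deg_{\bx}G$ respectively, such that
\[
\Vert\bz\Vert^{2N}\widetilde{F}(\bz)=\widetilde{\Sigma}_0(\bz)+\int_{\Y}\lram{\widetilde{\Sigma}(\bz,\by)}{\widetilde{G}(\bz,\by)}{p}\ud\nu(\by).
\]
Since $\Vert\bz\Vert^{2N}=(\be^\intercal(\bz\circ\bz))^N$, the left-hand side is already a polynomial in $\bz\circ\bz$, i.e.\ even in every coordinate of $\bz$.

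The key step is to symmetrize over the group $\{\pm 1\}^n$ acting on $\bz$ by sign changes. Both the LHS and $\widetilde{G}(\bz,\by)$ are invariant under this action, so averaging the identity over all $\epsilon\in\{\pm 1\}^n$ with the substitution $\bz\mapsto\epsilon\bz$ preserves the LHS and yields
\[
\Vert\bz\Vert^{2N}\widetilde{F}(\bz)=\widehat{\Sigma}_0(\bz)+\int_{\Y}\lram{\widehat{\Sigma}(\bz,\by)}{\widetilde{G}(\bz,\by)}{p}\ud\nu(\by),
\]
where $\widehat{\Sigma}_0\coloneqq 2^{-n}\sum_{\epsilon}\widetilde{\Sigma}_0(\epsilon\bz)$ and $\widehat{\Sigma}\coloneqq 2^{-n}\sum_{\epsilon}\widetilde{\Sigma}(\epsilon\bz,\by)$. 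Crucially, each substitution $\bz\mapsto\epsilon\bz$ preserves the SOS property, so $\widehat{\Sigma}_0$ and $\widehat{\Sigma}$ are SOS matrices as nonnegative combinations of SOS matrices. At the same time, averaging annihilates every monomial $\bz^{\bnu}$ carrying any odd exponent, so $\widehat{\Sigma}_0$ and $\widehat{\Sigma}$ are polynomials in $\bz\circ\bz$ alone and remain homogeneous in $\bz$ of the same degrees as before.

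Finally, I would define $S_0(\bx)\in\bS[\bx]^p$ and $S(\bx,\by)\in\bS[\bx,\by]^{pq}$ by the unique identities $S_0(\bz\circ\bz)=\widehat{\Sigma}_0(\bz)$ and $S(\bz\circ\bz,\by)=\widehat{\Sigma}(\bz,\by)$. Substituting $\bx=\bz\circ\bz$ in the averaged identity gives an equation that holds for all $\bx\in\RR^n_+$, and hence as a polynomial identity in $\bx$ by Zariski density of $\RR^n_+$. Degree bookkeeping forces $\deg S_0=N+\deg F$ and $\deg_{\bx}S=N+\deg F-\deg_{\bx}G$, with $S_0$ and $S$ homogeneous in $\bx$; the SOS conditions on $S_0(\bz\circ\bz)$ and $S(\bz\circ\bz,\by)$ are built in by construction. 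The main conceptual hurdle is the symmetrization step, but the only verification it really demands is that $\widetilde{G}(\epsilon\bz,\by)=\widetilde{G}(\bz,\by)$, which is immediate, so that the integral transforms correctly under the group action; everything else is routine.
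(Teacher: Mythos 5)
Your proposal is correct and follows essentially the same route as the paper: reduce to Theorem \ref{th::homo} via the substitution $\bx=\bz\circ\bz$, extract the part of the resulting identity that is even in each coordinate of $\bz$, and descend back to $\bx$ using the Zariski density of $\RR^n_+$. The only (cosmetic) difference is that you certify the even part is SOS by averaging over the sign-change group $\{\pm1\}^n$, whereas the paper writes the SOS factors as $H_0(\bz)=\sum_{\ba\in\{0,1\}^n}\bz^{\ba}H_{0,\ba}(\bz\circ\bz)$ and keeps the diagonal terms $\sum_{\ba}(\bz^{\ba}H_{0,\ba})^{\intercal}(\bz^{\ba}H_{0,\ba})$ — the two constructions yield the same matrices.
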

\begin{proof}
    Consider the polynomial matrices $F(\bz\circ\bz)\in\bS[\bz]^p$ and 
    $G(\bz\circ\bz, \by)\in\bS[\bz, \by]^{q}$. Then, by assumption and Theorem \ref{th::homo},
there exists $N$ and polynomial matrices 
$H_0\in\RR[\bz]^{\ell_0\times p}$, $H\in\RR[\bz,\by]^{\ell\times pq}$ 
for some $\ell_0,\ell\in\N$ that are homogeneous in $\bz$, such that 
\[
\Vert\bz\Vert^{2N}F(\bz\circ\bz)
=(H_0^\intercal H_0)(\bz)+\int_{\Y}\lram{(H^\intercal H)(\bz,\by)}{G(\bz\circ\bz, \by)}{p}
 \ud\nu(\by).
 \]    
Moreover, by Theorem \ref{th::homo}, $\deg H_0=N+\deg F$, and $\deg_{\bx} H=N+\deg F-\deg_{\bx}G$.
Note that there are sets of polynomial matrices 
\[
\{H_{0,\ba}(\bx):\ba\in\{0,1\}^n\}\subset\RR[\bx]^{\ell_0\times p}\quad\text{and}\quad
\{H_{\ba}(\bx,\by): \ba\in\{0,1\}^n\}\subset\RR[\bx,\by]^{\ell\times p}
\]
which are homogeneous in $\bx$, such that 
\[
H_0(\bz)=\sum_{\ba\in\{0,1\}^n}\bz^{\ba}H_{0,\ba}(\bz\circ\bz),\quad
H(\bz, \by)=\sum_{\ba\in\{0,1\}^n}\bz^{\ba}H_{\ba}(\bz\circ\bz, \by).
\]
Then
\[
\begin{aligned}
&(\be^\intercal(\bz\circ\bz))^{N}F(\bz\circ\bz)\\
=&\sum_{\ba\in\{0,1\}^n}(\bz\circ\bz)^{\ba}(H_{0,\ba}^\intercal H_{0,\ba})(\bz\circ\bz)
+\int_{\Y}\lram{\sum_{\ba\in\{0,1\}^n}(\bz\circ\bz)^{\ba}(H_{\ba}^\intercal H_{\ba})(\bz\circ\bz, \by)}{G(\bz\circ\bz, \by)}{p}
 \ud\nu(\by)\\
+&\sum_{\ba,\bb\in\{0,1\}^n, \ba\neq\bb}\bz^{\ba+\bb}(H_{0,\ba}^\intercal H_{0,\bb})(\bz\circ\bz)
+\int_{\Y}\lram{\sum_{\ba,\bb\in\{0,1\}^n, \ba\neq\bb}\bz^{\ba+\bb}(H_{\ba}^\intercal H_{\bb})(\bz\circ\bz, \by)}{G(\bz\circ\bz, \by)}{p}
 \ud\nu(\by).
\end{aligned}
 \]  
Comparing the even and odd terms in $\bz$ in the above equation, we get
\[
\begin{aligned}
&(\be^\intercal(\bz\circ\bz))^{N}F(\bz\circ\bz)\\
=&\sum_{\ba\in\{0,1\}^n}(\bz\circ\bz)^{\ba}(H_{0,\ba}^\intercal H_{0,\ba})(\bz\circ\bz)
+\int_{\Y}\lram{\sum_{\ba\in\{0,1\}^n}(\bz\circ\bz)^{\ba}(H_{\ba}^\intercal H_{\ba})(\bz\circ\bz, \by)}{G(\bz\circ\bz, \by)}{p}
 \ud\nu(\by).
\end{aligned}
\]
It follows that the equality 
\[
\begin{aligned}
(\be^\intercal\bx)^{N}F(\bx)
=\sum_{\ba\in\{0,1\}^n}\bx^{\ba}H_{0,\ba}^\intercal(\bx) H_{0,\ba}(\bx)
+\int_{\Y}\lram{\sum_{\ba\in\{0,1\}^n}\bx^{\ba}H_{\ba}^\intercal(\bx,\by) H_{\ba}(\bx, \by)}{G(\bx, \by)}{p} \ud\nu(\by)
\end{aligned}
\]
holds for all $\bx\in\RR^n_+$. As $\RR^n_+$ has interior points, the polynomial matrices 
on the left and right side of the above equality are identical.
Letting 
\[
S_0(\bx)\coloneqq\sum_{\ba\in\{0,1\}^n}\bx^{\ba}H_{0,\ba}^\intercal(\bx) H_{0,\ba}(\bx)
\text{ and }
S(\bx,\by)\coloneqq\sum_{\ba\in\{0,1\}^n}\bx^{\ba}H_{\ba}^\intercal(\bx,\by) H_{\ba}(\bx,\by),
\]
the conclusion follows.
\end{proof}

As an extension of P{\'o}lya's result \cite{polya},
Scherer and Hol \cite{SH2006} provided the following certificate for homogeneous polynomial 
matrices being positive on the nonnegative orthant. 
\begin{theorem}\label{th::Polya}\cite[Theorem 3]{SH2006}
   Suppose that the polynomial matrix $H(\bx)\in\bS[\bx]^p$ is homogeneous and $H(\bx)\succeq\lambda I_p$ 
   for some $\lambda>0$ on $\{\bx\in\RR_+^n\mid \be^\intercal\bx=1\}$. Write $H(\bx)=\sum_{\ba\in\supp(H)}H_{\ba}\bx^{\ba}$
   with each $H_{\ba}\in\bS^p$ and let 
   \[
   L(H)\coloneqq\max_{\ba\in\supp(H)}\frac{\ba!}{\deg H!}\Vert H_{\ba}\Vert, 
   \]
   where $\Vert\cdot\Vert$ denotes the spectral norm. Then, for all 
   \[
   N\ge\frac{\deg H(\deg(H)-1)L(H)}{2\lambda}-\deg H, 
   \]
   the coefficient of $\bx^{\ba}$ in $(\be^\intercal\bx)^NH(\bx)$ for each $\ba\in\N^n_{N+\deg H}$ is PD.
\end{theorem}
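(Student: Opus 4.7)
The plan is to follow the classical proof of P\'olya's theorem, reinterpreting the matrix coefficients of $(\be^\intercal\bx)^N P(\bx)$ as a ``sampling without replacement'' average of the matrices $P_\ba$ at a nearby simplex point, and bounding the deviation from the ``sampling with replacement'' value $P(\bx_\bb)$ in spectral norm using $L(P)$.

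Setting $d=\deg(P)$, I first expand $(\be^\intercal\bx)^N=\sum_{|\bg|=N}\tfrac{N!}{\bg!}\bx^\bg$ and multiply by $P(\bx)=\sum_{|\ba|=d}P_\ba\bx^\ba$ to extract, for each $\bb=(\beta_1,\ldots,\beta_n)$ with $|\bb|=N+d$, the coefficient matrix
\[
C_\bb=\sum_{\ba\le\bb,\,|\ba|=d}\frac{N!}{(\bb-\ba)!}P_\ba.
\]
Dividing by $\binom{N+d}{\bb}=(N+d)!/\bb!$ and introducing the normalized coefficients $\widetilde P_\ba\coloneqq\tfrac{\ba!}{d!}P_\ba$, which by definition of $L(P)$ satisfy $\Vert\widetilde P_\ba\Vert\le L(P)$, one obtains
\[
\frac{C_\bb}{\binom{N+d}{\bb}}=\sum_{|\ba|=d}\binom{d}{\ba}\frac{\prod_i\beta_i(\beta_i-1)\cdots(\beta_i-\alpha_i+1)}{(N+d)(N+d-1)\cdots(N+1)}\,\widetilde P_\ba.
\]
A falling-factorial Vandermonde identity shows that the scalar weights sum to $1$, so this expression is genuinely a convex combination of the matrices $\widetilde P_\ba$, corresponding to the multivariate hypergeometric law that draws $d$ items without replacement from a population containing $\beta_i$ items of type $i$.

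Next, I set $\bx_\bb\coloneqq\bb/(N+d)$, which lies on the simplex $\{\bx\in\RR_+^n:\be^\intercal\bx=1\}$, and observe that
\[
P(\bx_\bb)=\sum_{|\ba|=d}\binom{d}{\ba}\frac{\bb^\ba}{(N+d)^d}\,\widetilde P_\ba
\]
is the analogous convex combination of the $\widetilde P_\ba$ but with multinomial weights. Since $P(\bx_\bb)\succeq\lambda I_p$ by hypothesis, it will suffice to prove that $\Vert C_\bb/\binom{N+d}{\bb}-P(\bx_\bb)\Vert<\lambda$ uniformly in $\bb$, which then forces $C_\bb\succ 0$. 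Pulling out $L(P)$ via the triangle inequality, the task reduces to the scalar combinatorial inequality
\[
\sum_{|\ba|=d}\binom{d}{\ba}\left|\frac{\prod_i\beta_i(\beta_i-1)\cdots(\beta_i-\alpha_i+1)}{(N+d)(N+d-1)\cdots(N+1)}-\frac{\bb^\ba}{(N+d)^d}\right|\le\frac{d(d-1)}{2(N+d)},
\]
which is essentially the total-variation distance between the multinomial and multivariate hypergeometric distributions on $d$ draws from $N+d$ items. I would establish this by a telescoping product argument on the factor-by-factor ratios $(\beta_i-j)/(N+d-k)$ versus $\beta_i/(N+d)$, using $\sum_i\beta_i=N+d$ to collect and cancel the first-order cross terms. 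Combining then yields $\Vert C_\bb/\binom{N+d}{\bb}-P(\bx_\bb)\Vert\le d(d-1)L(P)/(2(N+d))$, which is strictly less than $\lambda$ precisely when $N\ge d(d-1)L(P)/(2\lambda)-d$, giving the stated degree bound.

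The main technical obstacle is establishing the sharp combinatorial inequality with the constant $1/2$; everything matrix-valued reduces to this single scalar statement, since the spectral-norm triangle inequality together with the uniform bound $\Vert\widetilde P_\ba\Vert\le L(P)$ converts the matrix claim into a statement about the total-variation distance between two discrete probability distributions. The remaining steps are essentially formal: extracting the coefficient matrix, normalizing, and identifying the two resulting expressions as convex combinations with different combinatorial weights.
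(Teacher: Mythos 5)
The paper offers no proof of this statement; it is quoted directly from Scherer and Hol, and the underlying argument is the Powers--Reznick refinement of P\'olya's theorem. Your overall strategy (extract the coefficient $C_\bb$ of $\bx^\bb$, write it as a weighted combination of the normalized coefficients $\widetilde P_\ba\coloneqq\tfrac{\ba!}{d!}P_\ba$, and compare with $P(\bb/(N+d))\succeq\lambda I_p$) is the right one, but the single scalar inequality to which you reduce everything is false. For $n=2$, $d=2$, $M\coloneqq N+d$ and $\bb=(\beta_1,\beta_2)$, a direct computation gives
\[
\sum_{|\ba|=2}\binom{2}{\ba}\left|\frac{\prod_i\beta_i(\beta_i-1)\cdots(\beta_i-\alpha_i+1)}{M(M-1)}-\frac{\bb^\ba}{M^2}\right|=\frac{4\beta_1\beta_2}{M^2(M-1)},
\]
which for $\bb=(1,2)$, $M=3$ equals $4/9>1/3=d(d-1)/(2M)$. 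The total-variation distance between the multivariate hypergeometric and multinomial laws is indeed $O(d^2/M)$, but only with a worse constant, so your route can at best deliver the theorem with the bound on $N$ weakened by roughly a factor of $2$.

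The loss comes from normalizing by $\binom{N+d}{\bb}$ so that the weights become a probability distribution. The standard argument instead compares the \emph{unnormalized} falling-factorial expression $\sum_\ba\binom{d}{\ba}\widetilde P_\ba\,\bx_\bb^{(\ba,t)}$ with $P(\bx_\bb)=\sum_\ba\binom{d}{\ba}\widetilde P_\ba\,\bx_\bb^{\ba}$, where $t=1/(N+d)$, $\bx_\bb=\bb/(N+d)$ and $\bx^{(\ba,t)}\coloneqq\prod_i\prod_{j=0}^{\alpha_i-1}(x_i-jt)$; this expression is a positive multiple of $C_\bb$, so its positive definiteness suffices. At the lattice points $\bx_\bb$ every difference $\bx_\bb^{\ba}-\bx_\bb^{(\ba,t)}$ is nonnegative (each factor $(\beta_i-j)/(N+d)$ is nonnegative, and the product vanishes when $\ba\not\le\bb$), and by the Chu--Vandermonde identity these differences sum exactly to $1-\prod_{j=0}^{d-1}(1-jt)\le d(d-1)t/2$. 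The triangle inequality with $\Vert\widetilde P_\ba\Vert\le L(P)$ then gives precisely the stated constant. Because the unnormalized weights sum to $\prod_{j=0}^{d-1}(1-jt)$ rather than to $1$, all term-wise errors have one sign and are paid only once; your version, which forces both weight systems to be probability distributions, pays the normalization defect a second time, and that is exactly the factor of $2$ by which your combinatorial inequality fails.
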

We will use this theorem as another intermediary result. Let  $D\coloneqq\max\left\{\deg F, \deg_{\bx}G\right\}$. 
\begin{lemma}\label{lem::poshom}
Suppose that 
$F\in\bS[\bx]^p, G\in\bS[\bx, \by]^{q}$ are homogeneous in $\bx$ and 
$F(\bx)\succ 0$ for all $\bx\in\RR^n_+\cap\X\setminus\{\mathbf{0}\}$. 
Then, there exists $\varepsilon>0$ such that the homogeneous polynomial matrix
\[
F_{\varepsilon}(\bx)\coloneqq(\be^\intercal\bx)^{D-\deg F}F(\bx)-\varepsilon\left((\be^\intercal\bx)^DI_p+
\int_{\Y}\lram{(\be^\intercal\bx)^{D-\deg_{\bx}G}I_{pq}}{G(\bx, \by)}{p}\ud\nu(\by)\right)\succ 0 
\]
for all $\bx\in\RR^n_+\cap\X\setminus\{\mathbf{0}\}$.
\end{lemma}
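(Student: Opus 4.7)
The plan is to exploit the homogeneity of $F_{\varepsilon}(\bx)$ in $\bx$ to reduce the problem from the unbounded set $\RR^n_+\cap\X\setminus\{\mathbf{0}\}$ to the compact ``slice'' $\Delta\coloneqq\{\bx\in\RR^n_+\cap\X \mid \be^\intercal\bx=1\}$, which is compact because it is a closed subset (in $\X$) of the standard simplex. A direct degree check shows that each summand in $F_{\varepsilon}$ is homogeneous of the common degree $D$ in $\bx$: the first term has degree $(D-\deg F)+\deg F=D$, the second is $(\be^\intercal\bx)^D I_p$, and inside the integral the prefactor $(\be^\intercal\bx)^{D-\deg_{\bx}G}$ combined with $G(\bx,\by)$ gives $D$ (integration in $\by$ does not affect the $\bx$-degree).

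Next I would simplify the expression on $\Delta$. On $\Delta$ we have $\be^\intercal\bx=1$, so $F_{\varepsilon}(\bx)$ collapses to
\[
F(\bx)-\varepsilon\left(I_p+\int_{\Y}\lram{I_{pq}}{G(\bx,\by)}{p}\ud\nu(\by)\right).
\]
The crucial algebraic observation is that $\lram{I_{pq}}{G(\bx,\by)}{p}=\trp{I_p\otimes G(\bx,\by)}=\tr{G(\bx,\by)} I_p$, since the off-diagonal blocks of $I_p\otimes G$ vanish and each diagonal block equals $G$. Hence on $\Delta$,
\[
F_{\varepsilon}(\bx)=F(\bx)-\varepsilon\, \rho(\bx)\, I_p,\qquad \rho(\bx)\coloneqq 1+\int_{\Y}\tr{G(\bx,\by)}\ud\nu(\by).
\]

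Now I would argue by compactness. Since $F(\bx)\succ 0$ on $\RR^n_+\cap\X\setminus\{\mathbf{0}\}$ and $F$ is continuous, compactness of $\Delta$ yields a constant $\lambda>0$ with $F(\bx)\succeq\lambda I_p$ on $\Delta$. The function $\rho(\bx)$ is a polynomial in $\bx$ (its coefficients are moments of $\nu$ against polynomials in $\by$, which are finite by the standing assumption $\int_{\Y}|h(\by)|\ud\nu(\by)<\infty$ for $h\in\RR[\by]$), hence continuous and bounded on $\Delta$ by some $M>0$. Choosing $\varepsilon\in(0,\lambda/M)$ gives $F_{\varepsilon}(\bx)\succeq(\lambda-\varepsilon M) I_p\succ 0$ on $\Delta$.

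Finally, I would transfer the conclusion back to the cone. For any $\bx\in\RR^n_+\cap\X\setminus\{\mathbf{0}\}$, the scalar $t\coloneqq\be^\intercal\bx$ is strictly positive and $\bx/t\in\Delta$ (the inclusion $\bx/t\in\X$ uses the homogeneity of $G$ in $\bx$ and $t>0$). Then $F_{\varepsilon}(\bx)=t^{D}F_{\varepsilon}(\bx/t)\succ 0$ by homogeneity. There is no serious obstacle here; the only point that needs care is the identity $\lram{I_{pq}}{G}{p}=\tr{G} I_p$, which decouples the PMI term into a scalar multiple of $I_p$ and lets the compactness argument proceed in the same way as in the scalar Dickinson--Povh setting.
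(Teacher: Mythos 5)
Your proof is correct and follows essentially the same route as the paper: both reduce by homogeneity of $F_\varepsilon$ to the compact slice $\overline{\X}=\RR^n_+\cap\X\cap\{\be^\intercal\bx=1\}$ and then use compactness of that slice to pick $\varepsilon>0$ small enough. Your extra observation that $\lram{I_{pq}}{G(\bx,\by)}{p}=\tr{G(\bx,\by)}I_p$ is a correct and clean simplification, but the paper's argument achieves the same end without it by directly bounding $\vert\lambda_{\max}\vert$ of the bracketed matrix over $\overline{\X}$.
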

\begin{proof}
By homogeneity, we only need to prove $F_{\varepsilon}\succ 0$ on 
    $\overline{\X}\coloneqq\RR^n_+\cap\X\cap\{\bx\in\RR^n \mid \be^\intercal\bx=1\}.$
That is, there exists $\varepsilon>0$ such that 
$F(\bx)-\varepsilon\left(I_p+\int_{\Y}\lram{I_{pq}}{G(\bx, \by)}{p}\ud\nu(\by)\right)\succ 0$
for all $\bx\in\overline{\X}$. Since $\overline{\X}$ is compact and $F(\bx)\succ 0$ 
on $\RR^n_+\cap\X\setminus\{\mathbf{0}\}$, letting 
\begin{equation}\label{eq::epsilon}
\varepsilon\coloneqq\frac{\min_{\bx\in\overline{\X}}\lambda_{\min}(F(\bx))}
{2\max_{\bx\in\overline{\X}}\vert\lambda_{\max}\left(I_p+\int_{\Y}\lram{I_{pq}}{G(\bx, \by)}{p}\ud\nu(\by)\right)\vert},
\end{equation}
we obtain the desired conclusion.
\end{proof}

We now prove Theorem \ref{th::PolyaMPU}. 
\begin{proof}[Proof of Theorem \ref{th::PolyaMPU}]
Let $F_{\varepsilon}(\bx)$ be the polynomial matrix in Lemma \ref{lem::poshom} where $\varepsilon>0$
is defined in \eqref{eq::epsilon}. Then,
applying Theorem \ref{th::homo2} to $F_{\varepsilon}(\bx)$ yields $N_1\in\N$ and 
polynomial matrices $S'_0\in\bS[\bx]^p$, $S'\in\bS[\bx,\by]^{pq}$ which 
are homogeneous in $\bx$ and satisfy that $S'_0(\bz\circ\bz)$, $S'(\bz\circ\bz, \by)$'s 
are SOS matrices, such that 
\[
(\be^\intercal\bx)^{N_1}F_{\varepsilon}(\bx)=S'_0(\bx)+\int_{\Y}\lram{S'(\bx, \by)}{G(\bx, \by)}{p}\ud\nu(\by).
 \]
Then, by the definition of $F_{\varepsilon}(\bx)$,
\begin{equation}\label{eq::gamma}
(\be^\intercal\bx)^{D-\deg F+N_1}F(\bx)
=\Gamma_0(\bx)+\int_{\Y}\lram{\Gamma(\bx, \by)}{G(\bx, \by)}{p}\ud\nu(\by) ,
\end{equation}
where 
\[
\Gamma_0(\bx)\coloneqq\varepsilon(\be^\intercal\bx)^{D+N_1}I_p+S'_0(\bx),\ \
\Gamma(\bx, \by)\coloneqq\varepsilon(\be^\intercal\bx)^{D-\deg_{\bx}G+N_1}I_{pq}+S'(\bx, \by).
\]
Note that $\Gamma_0(\bx)$ and $\Gamma(\bx, \by)$ are homogeneous in $\bx$ because 
by Theorem \ref{th::homo2}, 
\[
\deg S'_0=N_1+\deg F_{\varepsilon}=D+N_1 \ \ \text{and} \ \ 
\deg_{\bx} S'=N_1+\deg F_{\varepsilon}-\deg_{\bx}G=D-\deg_{\bx}G+N_1.
\]
Since $S'_0(\bz\circ\bz)$, $S'(\bz\circ\bz, \by)$ are SOS matrices,
for all $\bx\in\{\bx\in\RR_+^n\mid \be^\intercal\bx=1\}$, it holds
\[
\Gamma_0(\bx)\succeq\varepsilon I_p\ \text{ and }\
\Gamma(\bx, \by)\succeq\varepsilon I_{pq}, \  \forall \by\in\Y.
\]
Write $\Gamma(\bx, \by)=\sum_{\ba\in\suppx{\bx}{\Gamma}}\Gamma_{\ba}(\by)\bx^{\ba}$
and define
\[
   L(\Gamma)\coloneqq\max_{\by\in\Y}\max_{\ba\in\suppx{\bx}{\Gamma}}\frac{\ba!}{\deg_{\bx}\Gamma!}\Vert \Gamma_{\ba}(\by)\Vert.
\]
As $\Y$ is compact, $L(\Gamma)$ is well-defined. Let
\[
\begin{aligned}
   N_2\coloneqq\max\left\{\frac{\deg\Gamma_0(\deg\Gamma_0-1)L(\Gamma_0)}{2\varepsilon}-\deg\Gamma_0, 
   \ \frac{\deg_{\bx}\Gamma(\deg_{\bx}\Gamma-1)L(\Gamma)}{2\varepsilon}-\deg_{\bx}\Gamma\right\}.
\end{aligned}
   \]
Let $P_0(\bx)=(\be^\intercal\bx)^{N_2}\Gamma_0$ and $P(\bx, \by)=(\be^\intercal\bx)^{N_2}\Gamma$.
Write 
\[
P_0(\bx)=\sum_{|\ba|=\deg P_0}P_{0,\ba}\bx^{\ba}  \quad\text{ and } \quad
P(\bx, \by)=\sum_{|\ba|=\deg_{\bx}P}P_{\ba}(\by)\bx^{\ba}.
\]
By Theorem \ref{th::Polya},  $P_{0,\ba}\succ 0$ for all $|\ba|=\deg P_0$, 
$P_{\ba}(\by)\succ 0$ for all $|\ba|=\deg_{\bx}P$ and $\by\in\Y$.
Letting $N=D-\deg F+N_1+N_2$, the conclusion follows from \eqref{eq::gamma}.
\end{proof}

\section{Conclusions}\label{sec::cons}
We extend various classical Positivstellens\"atze to provide SOS-structured characterizations 
for polynomial matrices that are positive (semi)definite over a
semialgebraic set defined by a PMI with UQs. 
Under the Archimedean condition, we first present a matrix-valued Positivstellensatz 
incorporating universal quantifiers, 
along with a sparse version for scalar-valued polynomial objectives, leveraging the correlative sparsity patterns. 
We also derive two generalized Positivstellens\"atze without assuming the Archimedean condition. 
These results significantly extend the existing work on Positivstellens\"atze, and some of them remain novel and intriguing even in the absence of UQs. 
Our results are valuable in ensuring the robustness of PMIs over a prescribed set with uncertainty, allowing potential applications in areas such as optimal control, systems theory, and certifying stability.

%
%

\section*{Acknowledgements}
The authors appreciate Professor Konrad Schm{\" u}dgen for providing the result in \cite[Theorem 10.25]{Sch2020}, which significantly simplified the proof of Theorem \ref{th::SHpsatz}.
Feng Guo was supported by the Chinese National Natural Science Foundation under grant 12471478. 
Jie Wang was supported by National Key R\&D Program of China under grant No. 2023YFA1009401, the Strategic Priority Research Program of the Chinese Academy of Sciences XDB0640000 \& XDB0640200, and the National Natural Science Foundation of China under grant No. 12201618 \& 12171324. 

\bibliographystyle{abbrv}
\bibliography{mpu}

\end{document}